\newtheorem{theorem}{Theorem}[section]
\newtheorem{lemma}[theorem]{Lemma}
\theoremstyle{definition}
\newtheorem{assumption}[theorem]{Assumption}
\newtheorem{definition}[theorem]{Definition}
\theoremstyle{remark}
\newtheorem{remark}[theorem]{Remark}
\numberwithin{equation}{section}
\newcommand{\R}{\mathbb{R}}
\newcommand{\norm}[1]{\left\lVert#1\right\rVert}
\begin{document}

\title{An Efficient Model for Scaffold-Mediated Bone Regeneration}
	
	\author{Patrick Dondl}
	\address[Patrick Dondl]{Abteilung f\"ur Angewandte Mathematik,
		Albert-Ludwigs-Universit\"at Freiburg, Hermann-Herder-Strasse 10, 79104 Freiburg i.\ Br.}
	\email{patrick.dondl@mathematik.uni-freiburg.de}
	\urladdr{https://aam.uni-freiburg.de/agdo/}
	
	\author{Patrina S.\ P.\ Poh}
	\address[Patrina S.\ P.\ Poh]{Julius Wolff Institute for Biomechanics and Musculoskeletal Regeneration, Charité – Univeristätsmedizin Berlin, Berlin, Germany}
	\email{patrina.poh@charite.de}
	\urladdr{https://jwi.charite.de/metas/person/person/address\_detail/poh/}
	
	\author{Marius Zeinhofer}
	\address[Marius Zeinhofer]{Abteilung f\"ur Angewandte Mathematik,
		Albert-Ludwigs-Universit\"at Freiburg, Hermann-Herder-Strasse 10, 79104 Freiburg i.\ Br.}
	\email{marius.zeinhofer@mathematik.uni-freiburg.de}
	
	\subjclass[2020]{92-10, 35G46}
	
	\keywords{Mathematical modeling, tissue engineering, scaffold mediated bone regeneration, coupled PDE systems, mixed boundary conditions}
	
	\date{\today}
	
	\begin{abstract}
	    We present a three dimensional, time dependent model for bone regeneration in the presence of porous scaffolds to bridge critical size bone defects. Our approach uses homogenized quantities, thus drastically reducing computational cost compared to models resolving the microstructural scale of the scaffold. Using abstract functional relationships instead of concrete effective material properties, our model can incorporate the homogenized material tensors for a large class of  scaffold microstructure designs.
	    We prove an existence and uniqueness theorem for solutions based on a fixed point argument. We include the cases of mixed boundary conditions and multiple, interacting signalling molecules, both being important for application. Furthermore we present numerical simulations showing good agreement with experimental findings.
	\end{abstract}
	\maketitle
	\tableofcontents

\section{Introduction}

In this work, we are concerned with the development and well-posedness of a simple and efficient model for bone regeneration in the presence of a bioresorbable porous scaffold. The essential processes are an interplay between the mechanical and biological environment which we model by a coupled system of PDEs and ODEs. The mechanical environment is represented by a linear elastic equation and the biological environment through reaction-diffusion equations as well as as logistic ODEs, modelling signalling molecules and cells/bone respectively. Material properties are incorporated using homogenized quantities not resolving any scaffold microstructure. This makes the model efficient in computations, thus suitable as a forward equation in optimization algorithms and opening  up the possibility of patient specific scaffold design in the sense of precision medicine. 

We analyze the model mathematically, proving well-posedness. We stress that we allow data that is realistic for applications, i.e., non-smooth domains and mixed Dirichlet-Neumann boundary conditions.

The article is organized as follows. Next, we give an introduction into tissue engineering for the treatment severe bone defects and present our computational model. Then, we discuss its weak formulation in section \ref{sectionMathematicalFormulation} and prove an existence and uniqueness result in section \ref{sec:existence}. Finally, numerical simulations are presented in \ref{sectionNumericalExperiments}. Appendix \ref{sec:AppendixDiffusion} is concerned with regularity results for Dirichlet-Neumann boundary value problems and Appendix \ref{sec:AppendixODE} contains results on Banach space valued ODEs. We include the latter because, even though the results are folklore, we are not aware of any references.

	\subsection{Scaffold Mediated Bone Growth}
	The regeneration and restoration of skeletal functions of critical-sized bone defects ($>$25 mm) are very challenging despite a multitude of treatment options \cite{nauth2018critical}. The main problem is the phenomenon of non-union where the bone defect fails to become bridged after $>$9 months and does not show healing progression for 3 months \cite{calori2017non}. With $1.9\%$, the prevalence of non-union per fracture is relatively low \cite{mills2017risk}, yet the financial burden is high, for example, in the UK, the healthcare cost is estimated to be $\pounds 320$ million annually \cite{stewart2019fracture}. Moreover, the risk of non-union increases drastically with comorbidities such as diabetes as in this case the regenerative capability of bone tissue is compromised \cite{marin2018impact}.   
	
	Critical-sized defects may not heal and require in-depth planning of their treatment. Currently used therapeutic approaches include bone grafting, distraction osteogenesis, and the so-called ``Masquelet'' technique, in which a periosteal membrane is formed to induce bone defect healing \cite{nauth2018critical}. Despite having a general guideline for treatment of critical-sized bone defects, healing outcomes vary highly, dependent on the site and size of the defect and patient-related aspects, e.g., age, lifestyle and comorbid metabolic/systemic disorders \cite{roddy2018treatment}.

    Over the years, research illustrated the potential of using porous, possibly bio-resorbable support structures, so-called scaffolds, as supporting devices to promote bone defect regeneration. Initially, a scaffold is placed in the defect site, acting as a temporary support structure allowing for vascularization while guiding new bone formation. This has recently shown promising results in vivo and in clinical cases, for example \cite{petersen2018biomaterial} showed that the architecture of the scaffold can guide the endochondral healing of bone defects in rats. In this study, collagen-based scaffolds with cylindrical pores aligned along the principle stress axis were used. In \cite{cipitria2012porous, paris2017scaffold}, 3D-printed scaffolds made from a composite of polycaprolactone (PCL, a slowly degrading, bio-resorbable synthetic thermoplastic) and $\beta$-tricalcium phosphate ($\beta$-TCP) were used in an ovine experiment. In the studies \cite{petersen2018biomaterial, cipitria2012porous, paris2017scaffold} no relevant bridging of the bone defect was achieved without the addition of exogenous growth factors or cells. However, \cite{pobloth2018mechanobiologically} illustrated that clinically relevant bone formation for scaffold mediated bone regeneration is possible without exogenous growth factors. In this experiment a 3D-printed titanium scaffold with optimized mechanobiological properties was used and displayed clinically relevant functional bridging of a major bone defect in a large animal model. Concluding, these studies \cite{petersen2018biomaterial, cipitria2012porous, paris2017scaffold, pobloth2018mechanobiologically} indicate the possibility of using a scaffold-mediated bone growth approach for critical-size bone defect healing. Furthermore they indicate that the design and choice of materials are critical questions not yet fully understood.

	There are several objectives to be considered when designing a scaffold, such as (a) the porosity, pore size and shape, influencing cell proliferation and differentiation as well as the vascularization process; (b) the overall stability and elastic properties guaranteeing a proper transfer of loads, as mechanical stimulus is indispensable for bone growth; (c) patient specific information such as reduced bone healing capacities, caused for example by diabetes \cite{marin2018impact}. Therefore, the patient dependent optimal scaffold design is of fundamental importance and with the advent of additive manufacturing technologies the production of personalized scaffolds is -- in theory -- fully feasible.

	However, the design of scaffolds has been dominated by trial-and-error approaches -- modifying an existing scaffold architecture based on experimental outcomes, a very costly workflow unsuitable for patient specific design. Over the years, with the help of evolving computer aided design tools, topology optimization techniques have shown potential to address the optimal design question computationally. 
	
	This strategy has already been applied to design scaffolds meeting elastic optimality conditions with a given porosity or fluid permeability \cite{dias2014optimization, coelho2015bioresorbable, lin2004novel, guest2006optimizing, challis2012computationally, kang2010topology,Wang:2016js, dondl_bone_shape_opt}.
	Yet, a common limitation to these models is that they do not resolve the time dependence of the bone regeneration process, as scaffold mediated bone regeneration crucially depends on the varying elastic moduli over time.

	Highly accurate, fine scale models for bone formation exist (see, e.g., \cite{perez1, Sanz-Herrera.2008, ALIERTA2014328, CHECA2010961}). A central issue in most such micro-scale models is that their use in optimization routines for scaffold design is impeded by too high computational cost. Ideally, a bone regeneration scaffold design should be patient specific, i.e., depend on the individual patient's defect site and its biomechanical loading conditions, geometry, and regenerative ability as influenced by, e.g., comorbitities such as type 2 diabetes mellitus. Such an optimization of course relies on the availability of highly efficient models for bone regeneration that nevertheless take into account mechanics and biological signalling.
	
	Based on a previous, one-dimensional study \cite{poh2019optimization}, we thus propose a model based on homogenized quantities suitable for scaffold optimization in the sense of the first step in the ``Shape Optimization by the Homogenization Method'' \cite{allaire2012shape}. This means that our model does not resolve the micro-structure of the scaffold design, but uses coarse-grained values instead. In a scaffold based on a unit cell design, the scaffold volume fraction (or equivalently, the porosity) changes on a larger length-scale than the unit cell design. We use this fact to simplify our model, working with meso-scale averages of the volume fraction instead of the precise micro-structure. Likewise, the other quantities of the model can be viewed as locally averaged values. However, it should be made clear that using such an approach implies that only the averaged quantities can be tracked over the regeneration process and no prediction on how the micro-structure changes over time can be made. Rather, this is required as an input to provide the correct homogenized material properties. Our central assumption is that one can describe the time-evolution of the homogenized quantities in terms of their averages at the initial time-point. Compared to the aforementioned one-dimensional approach, our model can resolve important issues such as bone mass loss due to stress shielding in orthopaedic implants, see section \ref{sectionNumericalExperiments} for an explicit example.
    
    As our model is designed for computational efficiency we include only key events in the course of the bone healing process. We keep track of the mechanical environment at every point in time and space, depending on the current state of bone formation and scaffold degradation in terms of its molecular weight. Here we focus on additively manufactured scaffolds made out of PCL, a very promising material for this specific application. Of course, extensions to other materials (e.g., non-degrading titanium) are possible. The biological environment is represented via a concentration of endogenous angiogenic and osteoinductive factors (e.g., intrinsic growth factors/cytokines) which we call bio-active or signalling molecules and a concentration of osteoblasts, a type of bone forming cell. The coupling of the mechanical and biological properties is assumed to be driven through the local strain caused by mechanical loading of the scaffold-bone composite, i.e., mechanical loading leads to stimulus for the biological environment which in turn leads to bone growth and hence changes the mechanical properties.

    This results in a coupled system of evolution equations composed of a linear elastic equilibrium equation for every point in time, diffusion equations for the bio-active molecules and ordinary differential equations for the concentration of osteoblasts and the volume fraction of bone. As our main mathematical result we prove that this system admits a unique solution in a certain weak sense, see Theorem \ref{ExistenceThm}. This shows that our model is well-posed, a necessary requirement for a reasonable biological model. The strategy used to prove Theorem \ref{ExistenceThm} is to apply a fixed-point theorem on a map associated with the coupled system of equations, see the beginning of section \ref{sec:existence} for a precise description. The main difficulty we encounter is the notorious low regularity of mixed Dirichlet-Neumann boundary value problems \cite{savare1997regularity, kassmann2004regularity, grisvard2011elliptic} which one is forced to consider when one desires to allow for realistic boundary conditions See Appendix \ref{sec:AppendixDiffusion} where we collect results from the literature that are helpful in our case. 
    
    As our main focus lies on the existence and uniqueness results, we do not use concrete homogenized tensors in the equations, but abstract functional relationships. This has the advantage of proving the result for a wide class of imaginable scaffold architectures at once. The concrete micro-structure can then be taken into account when one performs numerical simulations. In the same spirit we keep the rest of the equations abstract, preferring functional relationships over concrete formulas. This constitutes also a perspective for future research: derive concrete homogenized quantities for certain scaffold details, compare the outcome to experimental results, and employ the model in an optimization routine analogous to the one presented in \cite{poh2019optimization}. The $3$-dimensionality of the model makes an optimization of the scaffold porosity considerably more challenging from a numerical viewpoint -- but due to the efficient, homogenized, model it is within reach to provide patient specific optimal scaffold designs that depend on the individual's defect site and geometry, as well as their regeneration capacity.

	\subsection{The System of Equations} 
	Let $\Omega \subset \R^3$ be the domain of computation, i.e., the bone defect site, and let $I = [0,T]$ be some finite time interval. On the defect site we keep track of the local scaffold volume fraction called $\rho(x)$, with $x\in\Omega$. Equivalently, the relation to the local scaffold porosity $\theta$ is given by $\theta(x) = 1 - \rho(x)$, but we work with $\rho$ exclusively. Note that we do not assume a time dependency for $\rho$ as experimental findings \cite{pitt1981aliphatic} have shown that, in the time-window relevant for us, PCL degrades via bulk erosion. However, the molecular mass decreases and we keep track of this by introducing the exponential decay $\sigma (t) = e^{-k_1 t}$, making the product $\rho(x)\cdot\sigma(t)$ the quantity encoding the mechanical properties of PCL over time and space. Furthermore, we denote the local bone density by $b(t,x)$ and the three quantities $b, \sigma$ and $\rho$ together determine the mechanical material properties of the bone-scaffold composite. We model this composite in the linear elastic regime using an elastic tensor $\mathbb{C}(\rho,\sigma,b)$ to capture the material properties. 
	
	In the spirit of the homogenization approach we assume little on the concrete properties of this tensor, in particular we do not assume isotropy. For a particular choice of micro-structure $\mathbb{C}(\rho, \sigma, b)$ can be made explicit. In order to quantify the elastic stimulus throughout the bone-scaffold composite we introduce a displacement field $u(t,x)$ satisfying the equation of mechanical equilibrium \eqref{StrongElasticEquation}. The corresponding strain is denoted by $\varepsilon(u)$, with $\varepsilon(u) = \frac12 (Du + D^Tu)$ the symmetrized derivative. 
	
	For the biological environment we introduce $N$ bio-active molecules denoted by $a_1(t,x),\dots,a_N(t,x)$, these are endogenous angiogenic and osteoinductive factors which we assume to diffuse depending on the scaffold density $\rho$. This is captured by $D_i(\rho)$ in the equation \eqref{StrongDiffusionEquation} and is left as an abstract functional relationship for the same reasoning as the elastic tensor. Furthermore, we assume the bio-active molecules to decay at a certain rate and to be produced in the presence of strain and a local density of specific cells (e.g., osteoblasts) which we denote by $c(t,x)$. The essential quantity for the production of bio-active molecules is $|\varepsilon(u)|_\delta$, where $|\cdot|_\delta$ is a functional relationship which we propose to view as a usual Euclidean norm or a truncated version thereof, see also \eqref{absvalueDeltaEstimate}. The concentrations of bio-active molecules are normalized to unity in healthy tissue and the choice of decay and production rate should reflect this in a concrete simulation.
	
	Equation \eqref{StrongCellODE} governing the production of bone forming cells (here: osteoblasts) is modeled by logistic growth and a functional relationship $H(a_1,\dots,a_N,c,b)$ allowing driving factors for osteoblast production to be the concentrations of bio-active molecules (causing differentiation of stem cells to osteoblasts), the proliferation of osteoblasts and the maturity of the bone present. Note that we do not model diffusion in this equation as we assume that osteoblasts diffuse on a significantly lower level than the bio-active molecules. Of course, more than one cell type is present and responsible for bone growth. For simplicity we only include osteoblasts in this model, but an extension is easily feasible here.  Finally, the equation modelling bone growth \eqref{StrongODE} follows the same pattern as the one for osteoblast concentration. In summary, our system of equations reads

	\begin{align}
		0 &= \operatorname{div}\big(
		\mathbb{C}(\rho,\sigma,b)\varepsilon(u)
		\big) \label{StrongElasticEquation}
	    &\parbox{18em}{(mechanical equilibrium)}
		\\
		d_ta_i 
		&=
		\label{StrongDiffusionEquation}
		\operatorname{div}\big(
		D_i(\rho)\nabla a_i
		\big)
		+
		k_{2,i}|\varepsilon(u)|_\delta c
		-
		k_{3,i}a_i
		&\parbox{18em}{(diffusion, generation, and decay of $i=1\dots N$ bio-molecules)}
		\\
		d_tc 
		&=
		\label{StrongCellODE}
		H(a_1,\dots,a_N,c,b)\bigg(1 - \frac{c}{1-\rho}
		\bigg)
		&\parbox{18em}{(osteoblast generation)}
		\\
		d_tb
		&=		
		\label{StrongODE}
		K(a_1,\dots,a_N,c,b)\bigg(
		1-\frac{b}{1-\rho}
		\bigg)
		&\parbox{18em}{(bone regeneration driven by $a,b$ and $c$).}
	\end{align}
	In the above system $k_1,k_{2,i}, k_{3,i}\geq0$, $i=1,\dots,N$ are constants that need to be determined from experiments, compare to the section \ref{sectionNumericalExperiments} where we discuss certain choices. The functional relationships $\mathbb{C}, D_i(\rho), |\cdot|_\delta, H$ and $K$ are all required to satisfy certain technical assumptions that guarantee the well-posedness of the above system. We discuss this in detail in section \ref{sectionMathematicalFormulation}.

Finally, we need to specify boundary conditions. For the elastic equilibrium equation we allow mixed boundary conditions including the limiting cases of a pure displacement boundary condition and a pure stress boundary condition. As for the bio-active molecules we assume that these are in saturation, i.e., $a(t,x) = 1$ adjacent to bone and on the rest of the boundary of $\Omega$ we assume no-flux boundary conditions. For the initial time-point we propose $a_i(0,x) = a_{i,0} = 0$ inside of $\Omega$. This choice reflects the scenario of a scaffold that is not preseeded with exogenous growth factors. However, different choices of $a_{i,0}$ are admissible and allow the model to cover e.g., pre-seeding with osteoinductive factors. Finally, at the initial time we assume that no osteoblasts and no regenerated bone are present inside the domain of computation. In formulas, it holds for all $i = 1,\dots,N$
	\begin{align}
		a_i(0,x)\label{InitialMolecules}
		&=
		0
		&\parbox{18em}{for all $x\in\Omega$}
		\\
		a_i(t,x)\label{BoundaryDirichletMolecules}
		&=
		1
		&\parbox{18em}{for all $t\in I$, $x$ adjacent to bone}
		\\
		D_i^\rho\nabla a_i(t,x)\cdot\eta
		&=
		0
		&\parbox{18em}{for all $t\in I$, $x$ not adjacent to bone}
		\\ 
		\big(
		\mathbb{C}(\rho,\sigma,b)\varepsilon(u(t,x))
		\big)\cdot\eta
		&=
		g_N(x)
		&\parbox{18em}{on the Neumann boundary of $\Omega$}
		\\ 
		u(t,x)
		&=
		g_D(x)
		&\parbox{18em}{on the Dirichlet boundary of $\Omega$}
		\\ 
		c(0,x) = b(0,x)\label{InitialBone}
		&=
		0
		&\parbox{18em}{for all $x\in \Omega$.}
	\end{align}
	
	The model allows for a time dependent choice of the mechanical loading $g_D$ and $g_N$. Due to the long regeneration time horizon of approximately $12$ months, however, it is not expedient to resolve very short time-scales of, e.g., the mechanics of physical therapy. Instead, we consider suitably time-averaged loading conditions here.

	\subsection{Concrete Examples.}\label{SubsectionExamples}
	We provide a number of possibilities for choosing the functional relationships $\mathbb{C}, D, H$ and $K$ and boundary conditions for the mechanical equilibrium equation \ref{StrongElasticEquation}. For an easy example of the elastic tensor that does not need to be derived by a complicated homogenization procedure we simply use the Voigt bound. If we denote by $\mathcal{C}_b$ and $\mathcal{C}_\rho$ the elastic tensors of matured bone and intact PCL respectively (in their simplest form modelled as isotropic materials) we thus choose 
	\begin{align*}
		\mathbb{C}(\rho,\sigma,b) = b\mathcal{C}_b + \rho\sigma\mathcal{C}_\rho.
	\end{align*}
	This is in accordance with \cite{poh2019optimization} where the same idea was used in a model with only one spatial variable. Note that this $\mathbb{C}$ naturally is time-dependent as the quantities $b$ and $\sigma$ vary in time. While this example may serve as a first choice,  one could also fix a concrete scaffold micro-structure, such as a gyroid design, and derive the explicit homogenized material properties (see, e.g., \cite{allaire2012shape}).
	
	For the diffusivities $D_i(\rho)$ we propose a dependence on the scaffold density $\rho$, for example
	\begin{align*}
		D_i(\rho) = k_i(1-\rho)\operatorname{Id}
	\end{align*}
	where $k_i$ are constants that measure the diffusivity of the bio-active molecule $a_i$ without the presence of the scaffold $\rho$. The term $(1 - \rho)$ accounts for reduced diffusivity for high PCL volume fractions. It is heuristically clear, yet interesting to note, that a too dense scaffold impairs bone regeneration. This is reflected in our model through the diffusivity above, since the amount of bioactive molecules is linked to bone regeneration via the ODE \eqref{StrongODE}. One could also imagine to derive the tensor $D_i(\rho)$ through a homogenization process which would then again reflect the choice of a specific micro-structure. For mathematical well-posedness reasons we are unable to allow the diffusivity $D_i(\rho)$ to depend on the bone density $b$. Furthermore, we also assume that $D_i(\rho)$ does not depend on time.
   
	Finally, we consider the functional relationships $H$ and $K$ inducing the production and proliferation of osteoblasts and bone.
	To be covered by our mathematical analysis, in the realization of $H$ and $K$ not more than two of the bio-active molecules should be multiplied. This is a technical mathematical issue due to a possible lack of integrability. Compare also to Assumption \ref{AssumptionOnKAndH} where we discuss this issue rigorously. 
	Consequently, we provide an example involving two bio-active molecules $a_1$ and $a_2$. These can be assumed to have different production rates and half-lives. Then we set 
	\begin{align}\label{ExampleH1}
		H(a_1,a_2, c, b)
		=
		H(a_1, a_2, c)
		=
		k_6a_1a_2(1 + k_7c)
	\end{align}
	hence bone growth only takes place when the full bio-environment, i.e., both molecules $a_1$ and $a_2$ are present. Furthermore the proliferation of osteoblasts is represented by the term $(1 + k_7 c)$. Again $k_6$ and $k_7$ are some constants that need to be chosen in accordance with experiments.
	
	For $K$ we propose a similar equation, modelling that bone growth takes place given the presence of osteoblasts and a suitable biological environment, represented in the choice of $K$ through the factor $a_1$. More precisely we set
	\begin{align}\label{ExampleK1}
	    K(a_1,a_2, c, b) = K(a_1, c) = k_4a_1c.
	\end{align}
	
	Another choice for $K$ reflecting that different bio-active molecules are responsible for different stages of bone formation and maturation is possible. This makes the functional relationship dependent of $b$. We set
	\begin{align}\label{ExampleK2}
		K(a,b)
		=
		f_1(b)a_1c + f_2(b) a_2 c.
	\end{align}
	Now, $f_1$ can be chosen with support on small values of $b$, such that in this stage molecule $a_1$ is driving the growth, and $f_2$ with support on larger $b$, thus requiring $a_2$ in later stages of regeneration. We remark that empirically many different bio-molecules are observed and it is assumed that these are linked to different biological processes \cite{kempen2010growth}.
\section{Mathematical Formulation}\label{sectionMathematicalFormulation}
    In this section we describe the mathematical setting in which we prove the existence of a solution to the system of equations \eqref{StrongElasticEquation} -- \eqref{StrongODE}. We also state the assumptions the functional relationships $\mathbb{C}$, $D_i$,  $|\cdot|_{\delta}$ $H$ and $K$ are required to satisfy.
    
    \subsection{The Domain}\label{as:domain}
    Fix a time interval $I=[0,T]$ with $T>0$. The spatial domain $\Omega\subset\mathbb{R}^n$, with $n=1,2,3$ is assumed to be open, bounded and connected and for every equation we split the boundary $\partial\Omega$ into a Dirichlet part and a Neumann part. For the elastic equation we  write $\Gamma_D^e$ and $\Gamma_N^e$ for Dirichlet and Neumann boundary respectively, here $\Gamma_D^e = \emptyset$ is allowed. For the diffusion equations we write $\Gamma_D^d$ and $\Gamma_N^d$. To simplify notation we do not treat the case of different Dirichlet-Neumann partitions for different diffusion equations, though this does not lead to further mathematical complications. Finally we need to assume some regularity on $\Omega$ and the partition $\partial\Omega = \Gamma_D^d\cup\Gamma_N^d$ for the diffusion equations, namely the set $\Omega\cup\Gamma^d_N$ needs to be Gr\"oger regular which is a concept introduced in \cite{groger1989aw}, see also \cite{haller2009holder}. These regularity assumptions are tailored to provide a certain regularity of the solutions of the diffusion equations which we discuss in detail in Appendix \ref{sec:AppendixDiffusion}. These assumptions are very general and cover the cases one wants to use in practice.
    
    \subsection{Admissible Data}
    The admissible scaffold volume fractions $\rho$ are given as 
    \begin{equation}\label{AdmissableScaffolds} 
    P\coloneqq \{ \rho\in C^{0}(\overline{\Omega}) \mid c_P \leq \rho(x) \leq C_P \} 
    \end{equation}
    with some fixed constants $0 < c_P < C_P < 1$, excluding unreasonable scaffold designs.
    To a scaffold volume fraction $\rho\in P$ we assign the set $W_\rho$ of admissible cell and bone volume fractions, consisting of tuples of continuous functions in time and space
    \begin{equation}\label{AdmissableBones} 
    W_\rho \coloneqq \{ (c,b) \in C^0(\overline{I}\times\overline{\Omega})^2 \mid 0 \leq c(t,x), \ b(t,x) \leq 1 - \rho(x) \}. 
    \end{equation}
    
    \subsection{The Elastic Equation}
    
	We begin with the Hookean law $\mathbb{C}$. It depends on the scaffold and bone, i.e., on $\rho$, $\sigma$ and $b$ and varies therefore in space and time. We assume that the map 
	\begin{equation}\label{HookeanLaw}
	    W_\rho \to L^\infty(I, L^\infty(\Omega, \mathcal{L}(\mathcal{M}_s))) \quad\text{with}\quad b\mapsto(t\mapsto(x\mapsto\mathbb{C}(\rho,\sigma,b)(t,x)))
	\end{equation}
	is Lipschitz continuous with Lipschitz constant $L_\mathbb{C}$ independent of $\rho\in P$. Remember that $\sigma$ is a fixed exponential decay. Here $\mathcal{M}_s$ denotes the symmetric $n\times n$ matrices and $\mathcal{L}(\mathcal{M}_s)$ is the space of linear maps from $\mathcal{M}_s$ into itself, usually called the space of fourth order tensors. The space $L^\infty(I, L^\infty(\Omega, \mathcal{L}(\mathcal{M}_s)))$ denotes a Bochner space, i.e., a Banach-space valued Lebesgue space, see, e.g., \cite{diestel1977vector,boyer2012mathematical}. In the following we will often omit the cumbersome notation of dependencies on $x$ and $t$ for $\mathbb{C}$. Spelling out the definitions of the norms in \eqref{HookeanLaw} this Lipschitz continuity means that for all $M\in\mathcal{M}_s$ it holds
	\begin{equation}\label{HookeanLawLipEstimate}
	    \lvert \mathbb{C}(\rho(x),\sigma(t),b_1(t,x))M - \mathbb{C}(\rho(x),\sigma(t),b_2(t,x))M \rvert \leq L_{\mathbb{C}}\norm{b_1 - b_2}_{C^0}\lvert M \rvert
	\end{equation}
	for all $(c_1,b_1)$, $(c_2,b_2)\in W_\rho$ and uniformly in $\rho \in P$ and uniformly on the complement of a set of measure zero in $I\times\Omega$. Furthermore we assume that there are constants $0<c_{ \mathbb{C} } < \infty$ and $0 < C_{ \mathbb{C} } < \infty$ such that
	\begin{equation}\label{UpperBoundHookLaw}
	\sup_{\rho,c,b}\norm{\mathbb{C}(\rho,\sigma,b)}_{L^\infty(I, L^\infty(\Omega, \mathcal{L}(\mathcal{M}_s)))} \leq C_{\mathbb{C}} \ \ \text{and}\ \ \inf_{\rho, c, b}\mathbb{C}(\rho,\sigma,b)M:M \geq c_{\mathbb{C}}|M|^2
	\end{equation}
	where the supremum and infimum run over $\rho\in P$ and $b\in W_\rho$ and $A:B=\operatorname{tr} AB^T$ denotes the full contraction of matrices.
	We now discuss the weak formulation of equation \eqref{StrongElasticEquation}. Let $\rho\in P$ and $(c,b)\in W_\rho$ be some admissible functions. We first address the case where $\Gamma^e_D$ has non-vanishing measure and comment on the pure Neumann problem later. The strong form
	\[
	-\operatorname{div}\Big(\mathbb{C}(\rho,\sigma,b)\varepsilon(u)\Big) = 0 \ \text{in }\Omega, \quad u_{|\Gamma^e_D} = g_D^e, \quad \Big(\mathbb{C}(\rho,\sigma,b)\varepsilon(u)\Big)\eta_{|\Gamma^e_N} = g_N^e
	\]
	encodes that at every point in time mechanical equilibrium is achieved, making the equation time dependent. The function space for the weak formulation is: $L^2(I,H^{1,2}(\Omega,\mathbb{R}^n))$ with $H^{1,2}(\Omega,\mathbb{R}^n)$ being the Sobolev space of $\mathbb{R}^n$-valued, square integrable functions with square integrable derivatives, see for example \cite{brezis2010functional,grisvard2011elliptic,adams2003sobolev} for a detailed account of such spaces. If the context is clear, we will usually write $H^1(\Omega)$ instead of $H^{1,2}(\Omega,\mathbb{R}^n)$. The space of test functions is $L^2(I,H^1_{D_e}(\Omega))$, where $H^1_{D_e}(\Omega)$ is the subspace of $H^1(\Omega)$ whose members vanish on $\Gamma_D^e$. For the Dirichlet boundary values we require $g_D^e$ to be in $L^2(I,H^{1/2}(\Gamma^e_D,\mathbb{R}^n))$, with $H^{1/2}(\Gamma)$, for some $\Gamma\subset\partial\Omega$, being the trace space of $H^1(\Omega)$, see for example \cite{adams2003sobolev,grisvard2011elliptic}. The Neumann boundary values can be given as an element of $L^2(I,H^{1/2}(\Gamma_N^e,\mathbb{R}^{n})')$. Denoting by $\langle \cdot,\cdot \rangle_{H^{1/2}}$ the dual pairing of $H^{1/2}(\Gamma_N^e,\mathbb{R}^n)$ the weak formulation of \eqref{StrongElasticEquation} is
	\begin{align}
	    \int_I\int_\Omega \mathbb{C}(\rho,\sigma,b)\varepsilon(u):\varepsilon(\cdot)\,dxdt &= \int_I\langle g_N^e,\cdot \rangle_{H^{1/2}}\,dt \quad \text{in}\quad L^2(I,H^1_{D_e}(\Omega))' \label{WeakElasticity}
	    \\
	    u &= g_D^e \quad\text{in}\quad L^2(I, H^{1/2}(\Gamma_D^e)).\notag
	\end{align}
	The left hand side of \eqref{WeakElasticity} equation defines an operator
	\begin{equation*}
	    \mathcal{T}:L^2(I,H^1(\Omega)) \to L^2(I,H^1(\Omega))' \quad\text{with}\quad \mathcal{T}u = \int_I\int_\Omega \mathbb{C}(\rho,\sigma,b)\varepsilon(u):\varepsilon(\cdot)\,dxdt.
	\end{equation*}
	Note that the isometry $L^2(I,H^1(\Omega))'\, \Tilde{=}\, L^2(I,H^1(\Omega)')$ implies that the equation \eqref{WeakElasticity} can be understood to hold almost everywhere in time, which is precisely what we want for our model. Furthermore, Korn's inequality can be used to show that $\mathcal{T}$ is coercive, see \cite{ciarlet2010korn}. The advantage of the abstract formulation is that it makes the Lax-Milgram Lemma applicable. Now we comment on the pure Neumann boundary value problem, i.e., the case $\Gamma_N^e = \partial\Omega$. We define the spaces $W\coloneqq \ker(\varepsilon)\subset H^1(\Omega)$ and the quotient $H^1(\Omega)/W$. Note that $W$ consists of the functions of the form $w(x) = Ax + b$, where $A$ is an anti-symmetric matrix and $b\in\mathbb{R}^n$, see for example \cite{ciarlet1988mathematical}. For the pure Neumann problem consider the operator
	\begin{equation*}
	    \mathcal{T}: L^2(I,H^1(\Omega)/W) \to L^2(I,H^1(\Omega)/W)'
	\end{equation*}
	using the induced map $\hat{\varepsilon}:H^1(\Omega)/W\to L^2(\Omega, \mathcal{M}_s)$ in its definition
	\begin{equation*}
	    \mathcal{T}(u) = \int_I\int_\Omega \mathbb{C}(\rho,\sigma,b)\hat{\varepsilon}(u):\hat{\varepsilon}(\cdot)\,dxdt.
	\end{equation*}
	The codomain of this operator is $L^2(I,H^1(\Omega)/W)' \, \Tilde{=} \, L^2(I,(H^1(\Omega)/W)')$, which encodes a compatibility condition. We assume that our Neumann boundary condition is given as a function $g_N^e\in L^2(I, H^{1/2}(\partial\Omega)')$ that satisfies almost everywhere in $I$
	\begin{equation}\label{NeumannComaptibility}
	    \langle g_N^e(t),\cdot \rangle_{H^{1/2}} = 0 \quad\text{for all }w\in W.
	\end{equation}
	This guarantees that 
	\begin{equation*}
	    \int_I\langle g_N^e,\cdot\,\rangle_{ H^{1/2} }\,dt \in L^2(I,H^1(\Omega)/W)'
	\end{equation*}
	is an admissible right hand side. The pure Neumann problem consists then of finding $u\in L^2(I,H^1(\Omega)/W)$ such that
	\begin{equation*}
	    \int_I\int_\Omega \mathbb{C}(\rho,\sigma,b)\hat{\varepsilon}(u):\hat{\varepsilon}(\cdot)\,dxdt = \int_I\langle g_N^e,\cdot\,\rangle_{ H^{1/2} }\,dt \in L^2(I,H^1(\Omega)/W)',
	\end{equation*}
	Finally, let us remark that one can treat the Dirichlet, the Neumann and the mixed boundary value problem at once by always passing to the quotient $H^1_{D_e}(\Omega)/W$. In the case of a proper Dirichlet boundary condition we then have $W\cap H^1_{D_e}(\Omega) = \{0\}$, which implies $H^1_{D_e}(\Omega)/W = H^1_{D_e}(\Omega)$, hence recovers the Dirichlet or mixed case, and if $\Gamma_N^e = \partial\Omega$ we retrieve the pure Neumann case.
	
	\subsection{Diffusion Equations}
	Before we state the weak formulation of the diffusion equations, for the reader's convenience, we recall the concept of the time derivative we are using -- namely a regular Banach space valued distribution with a dense embedding $j\in\mathcal{L}(X,X')$ just as in \cite{boyer2012mathematical}. Let $(i,X,H)$ be a Gelfand triple, i.e., $X$ is a Banach space, $H$ is a Hilbert space and $i\in\mathcal{L}(X,H)$ has dense range. Then we set $j$ to be $j=i'\circ R\circ i$ where $R:H\to H'$ is the Riesz isometry and $i'$ denotes the Banach space adjoint of $i$. We say a function $a\in L^2(I,X)$ possesses a time derivative $d_ta\in L^2(I,X')$ if it holds
	\begin{equation*}
		\int_I (j\circ a)(t)\partial_t\varphi(t)\,dt = -\int_I d_ta(t)\varphi(t)\,dt \quad\forall\varphi\in\mathcal{D}(I).
	\end{equation*}
	The integrals are $X'$ valued Bochner integrals and we set $\mathcal{D}(I)\coloneqq C_c^\infty(I)$ as usual. This is used to define a generalized Sobolev space built on the triple $(i,X,H)$ as
	\begin{equation*}
		H^{1,2,2}(I,X,X') = \{ a\in L^2(I,X) \mid d_ta\in L^2(I,X') \}.
	\end{equation*}
	See in \cite[Chapter II, section 5]{boyer2012mathematical} for more information. We only remark that functions in this Sobolev space have representatives in $C^0(\overline{I},H)$, hence initial value problems can be formulated.
	
    To get to our concrete diffusion equations we let $\rho\in P$, $(c,b)\in W_\rho$ and, depending on the boundary conditions for the elastic equation, $u\in L^2(I,H^1(\Omega))$ or $u\in L^2(I,H^1(\Omega)/W)$ be some fixed functions. In order to work with homogeneous Dirichlet boundary conditions in space we write
	\begin{equation*}
		a_i(t) = \Tilde{a}_i(t) + 1 \quad\text{with}\quad \Tilde{a}_i(t) \in H^1_{D_d}(\Omega) \text{ for } i=1,\dots,N.
	\end{equation*}
	Here $H^1_{D_d}(\Omega)$ denotes the subspace of $H^1(\Omega)$ with vanishing trace on $\Gamma_D^d$.
	We can thus seek $\Tilde{a}_i$ in the space $H^{1,2,2}(I,H^1_{D_d}(\Omega),H^1_{D_d}(\Omega)')$ built around the triple $(\operatorname{id}_{|H^1_{D_d}},H^1_{D_d},L^2)$ satisfying the equation
	\begin{align}
	\int\langle d_t\Tilde{a}_i,\cdot \rangle_{H^1_{D_d}} + \iint D_i^\rho\nabla \Tilde{a}_i \nabla\cdot + k^3_i\Tilde{a}_i\cdot\,dxdt &= \iint (k^2_i|\varepsilon(u)|_\delta c - k^3_i)\,\cdot\,dxdt
	\\ 
	\Tilde{a}_i(0) &= -1.
	\end{align}
	The first equation is an equality in the space $L^2(I,H^1_{D_d}(\Omega))'$, i.e., it is required to hold when tested with all members of $L^2(I,H^1_{D_d}(\Omega))$. In the second equation, the initial conditions is an equality in the space $L^2(\Omega)$. For every $i=1,\dots,N$ we have different constants $k^2_i$ and $k^3_i$ and also different diffusivities $D_i^\rho$. Note that the quantity $|\varepsilon(u)|_delta$ is well defined, even though the solution of the elastic equation is only unique up to rigid body motions. We assume furthermore that the $D_i^\rho$ are time-independent, measurable, essentially bounded and coercive, precisely 
	\begin{align}
		D_i^\rho \in L^\infty(\Omega,\mathcal{M}_s)\label{Diff_reg}
		\\
		\langle D_i^\rho\xi,\xi \rangle \geq c_D|\xi|^2 \quad \forall \xi\in\mathbb{R}^n \label{Diff_Coerciv}
	\end{align}
	where $\mathcal{M}_s$ again denotes the symmetric $n\times n$ matrices and the inequality in \eqref{Diff_Coerciv} is to be understood uniformly in $x\in\Omega$, $\rho\in P$ and $i=1,\dots,N$. Finally the function $|\cdot|_\delta:\mathbb{R}^{n\times n}\to[0,\infty)$ is required to to be globally Lipschitz and to satisfy an estimate of the form
	\begin{equation}\label{absvalueDeltaEstimate}
	    |A|_\delta \leq C_1|A| + C_2 \quad\text{for all }A\in\mathbb{R}^{n\times n}
	\end{equation}
	where $C_1,C_2 >0$ and $|A|$ denotes the Euclidean norm of a matrix.
	
	\subsection{Ordinary Differential Equations}
	We treat the ordinary differential equations in the vector valued sense and focus here on the cell equation \eqref{StrongCellODE}, the bone equation \eqref{StrongODE} being treated analogously. For each $x\in\Omega$, we thus seek a function $c_{x}$ satisfying the ODE
	\begin{equation*}
	    c'_{x}(t) = H(a_1(t,x),\dots,a_N(t,x), c_{x}(t), b(t,x))\bigg( 1 + \frac{c_{x}(t)}{1-\rho(x)} \bigg)
	\end{equation*}
	with $c_{x}(0) = 0$. If there is a solution for all $x\in\Omega$ we obtain a function $c$ in time and space, i.e., $c:I\times\Omega\to\mathbb{R}$ with $c(t,x)\coloneqq c_x(t)$.
	As $H(a_1,\dots,a_N,c,b)$ can not generally assumed to be continuous, a reasonable space to work in is
	\begin{equation*}
		W^{1,p}(I,X) = \{ c\in L^p(I,X) \mid d_tc \in L^p(I,X) \},
	\end{equation*}
	similar to the space for the diffusion equation, but without the identification $j:X\hookrightarrow X'$. An existence and uniqueness result in this setting can be found in the appendix, see Theorem \ref{LocalExistenceTheorem}. 
	
	In our concrete case we choose $X = C^0(\overline{\Omega})$, $p=2$, so 
	for fixed $\rho\in P$ and $a=(a_1,\dots,a_N)\in H^{1,2,2}(I,H^1(\Omega),H^1_{D_d}(\Omega)')^N$ we seek $c\in W^{1,2}(I,C^0(\overline{\Omega}))$ satisfying
	\begin{equation}\label{CellODE}
	    d_tc = H(a_1,\dots,a_N,c,b)\bigg( 1 - \frac{c}{1 - \rho} \bigg)\quad\text{with}\quad c(0) = 0.
	\end{equation}
	We assume that  $H$ is a Nemytskii operator induced by a function which we again denote by $H$,
    \begin{equation}
	H:\mathbb{R}^{N+2}\to\mathbb{R}\quad\text{with}\quad (a_1,\dots,a_N)=a\mapsto H(a)
	\end{equation}
	such that $H(a,c,b)\geq0$ whenever $a_1,\dots,a_N,b,c\geq0$. Furthermore we assume that $H$ is locally Lipschitz continuous. Note that by some abuse of notation we denote by $a$, $b$ and $c$ both a function in a Sobolev space and a vector in Euclidean space.
	
	For the bone ODE we work in the same space and seek $b\in W^{1,q}(I,C^0(\overline{\Omega}))$ satisfying
	\begin{equation}\label{ODE}
		d_tb = K(a_1,\dots,a_N,c,b) \bigg( 1 - \frac{b}{1-\rho} \bigg) \quad\text{with}\quad b(0) = 0.
	\end{equation}
	We assume the functional relationship $K$ is induced by
	\[
	    K:\mathbb{R}^{N+2} \to \mathbb{R} \quad\text{with} \quad (a,b,c) = (a_1,\dots,a_N,b,c)\mapsto K(a,b,c)
	\] 
	that satisfies $K(a_1,\dots,a_N,b,c)\geq0$ for $a_1,\dots,a_N,b,c\geq0$ and that $K$ is locally Lipschitz continuous as a map $K:\mathbb{R}^{N+2}\to\mathbb{R}$. Finally, we need another assumption on $H$ and $K$ that is connected to the integrability and the regularity properties of the solutions to the diffusion equations, see assumption \ref{AssumptionOnKAndH}. 
	We summarize our setting.
	\begin{assumption}\label{assumptionSetting}
	    We assume domain regularity as discussed in subsection \ref{as:domain}, define the admissible scaffold densities $P$ in \eqref{AdmissableScaffolds} and the set $W_\rho$ in \eqref{AdmissableBones}. The material tensor $\mathbb{C}$ satisfies \eqref{HookeanLaw} and \eqref{UpperBoundHookLaw} and admissible boundary conditions for the elastic equation are given in \eqref{WeakElasticity} and \eqref{NeumannComaptibility}. For the diffusion we assume \eqref{Diff_reg} and \eqref{Diff_Coerciv} and $|\cdot|_\delta$ must satisfy \eqref{absvalueDeltaEstimate}. The functional relationships $H$ and $K$ need to be locally Lipschitz, preserve positivity and satisfy the technical assumption \ref{AssumptionOnKAndH} concerning integrability.
	\end{assumption}

\section{Existence and Uniqueness}\label{sec:existence}
In this section we will prove that there exists a unique solution to the system \eqref{StrongElasticEquation}--\eqref{StrongODE} in the weak sense, i.e., there are functions $u^* = \Tilde{u}^* + u_{g_D^e}$ with $\Tilde{u}^* \in L^2(I,H^1_{D_e}(\Omega)/W)$ and $u_{g_D^e|\Gamma_D^e} = g_D^e$, $a^* = \Tilde{a}^* + 1$ with $\Tilde{a}^* \in H^1(I,H^1_{D_d}(\Omega), H^1_{D_d}(\Omega)')$, $c^* \in W^{1,p}(I,C^0(\overline{\Omega}))$ and $b^* \in W^{1,q}(I,C^0(\overline{\Omega}))$ satisfying 
\begin{gather}\label{WeakEquationsBegin}
    \int_I\int_\Omega \mathbb{C}(\rho,\sigma,b^*)\hat{\varepsilon}(\Tilde{u}^* + u_{g_D^e}) : \hat{\varepsilon}(\cdot)\,dxdt = \int_I\langle g_N^e,\cdot \, \rangle_{H^{1/2}(\Gamma^e_N)}dt 
    \\
    \int\langle d_t\Tilde{a}^*_i,\cdot \rangle + \iint D_i^\rho\nabla \Tilde{a}^*_i \nabla\cdot + k^3_i\Tilde{a}^*_i\cdot\,dxdt = \iint (k^2_i|\varepsilon(u^*)|_\delta c^*-k^3_i)\,\cdot\,dxdt
	\\ 
	\Tilde{a}^*_i(0) = -1, \quad\text{with } i = 1,\dots,N,
	\\
	d_tc^* = H(a^*_1,\dots,a^*_N,c^*,b^*)\bigg( 1 - \frac{c^*}{1 - \rho} \bigg)\quad\text{with}\quad c^*(0) = 0,
	\\
	d_tb^* = K(a^*_1,\dots,a^*_N,c^*,b^*) \bigg( 1 - \frac{b^*}{1-\rho} \bigg) \quad\text{with}\quad b^*(0) = 0.\label{WeakEquationsEnd}
\end{gather}
The proof of this result relies essentially on the elementary fixed point theorem of Banach which we will employ for the complete metric space $W_\rho$. The strategy is to fix $\rho\in P$, then start with some arbitrary admissible functions $(c,b)\in W_\rho$ and to solve the equations successively. More precisely, the elastic equation will yield $u = u(c,b)$, the diffusion equations $a_i = a_i(c, u)$, the cell equation will be solved with data $a_i$ and $b$ yielding an updated cell function $\overline{c} = \overline{c}(a_i, b)$ and finally the bone equation will be solved with data $a_i$ and $c$ to get an updated bone function $\overline{b} = \overline{b}(a_i, c)$. This procedure gives rise to an operator $\mathcal{I}$ which we will refer to as the iteration operator, formally
\[
    \mathcal{I}:W_\rho \to W_\rho \quad\text{with}\quad (c,b)\mapsto (\overline{c},\overline{b}).
\]
It is easy to see that all possible solutions to \eqref{WeakEquationsBegin}--\eqref{WeakEquationsEnd} correspond to all possible fixed-points of $\mathcal{I}$. The crucial part of the proof consists of establishing regularity for the solutions of the diffusion equations, see also Appendix \ref{sec:AppendixDiffusion} for a discussion of results known in the literature serving our purpose.

Finally, the whole strategy discussed above does only work on a short time interval $I = [0,T]$, i.e., $T$ small enough. However, by a continuation argument we can afterwards extend this solution to span any finite time interval. We will need a technical assumption on the ODEs in connection with the iteration operator $\mathcal{I}$. This is due to the fact that we cannot guarantee an $L^\infty(I\times\Omega)$ bound on the solutions to the diffusion equations. See also Remark \ref{RemarkAssumptionODE} on when the following assumption holds.
\begin{assumption}\label{AssumptionOnKAndH}
    Let $\rho\in P$ and $(c,b)\in W_\rho$ and denote by $u \in L^2(I,H^1(\Omega)/W)$, $a \in L^2(I, C^0(\overline{\Omega})^N)$ and $\overline{c} \in C^0(\overline{I} \times \overline{\Omega})$ the functions produced by solving the equations successively as in the definition of the iteration operator $\mathcal{I}$. The existence and regularity of these solutions is discussed in the main theorem. Assume there exists $p\in[1,\infty]$ such that for every bounded set $B\subset C^0(\overline{\Omega})$ there are functions $m_B^H \in L^p(I)$ and $L^H_B \in L^1(I)$ such that it holds
    \begin{gather}\label{AssumptionH1}
        \norm{ H(a(t), \Tilde{c}, b(t) }_{C^0(\overline{\Omega})} \leq m_B^H(t) \quad \text{for all } \Tilde{c} \in B, \ \text{a.e.\ }in \ I,
        \\
        \label{AssumptionH2}
        \norm{H(a(t),\Tilde{c}, b(t)) - H(a(t),\Tilde{\Tilde{c}}, b(t))}_{C^0(\overline{\Omega})} \leq L^H_B(t) \text{ for all } \Tilde{c}, \Tilde{\Tilde{c}} \in B, \ \text{a.e.\ }in \ I.
    \end{gather}
    Furthermore assume that there are functions $\lambda^H$ and $\beta^H$ in $L^1(I)$ and $\alpha^H\in L^2(I)$ such that we can estimate, independently of the choice of $(c,b) \in W_\rho$ (and consequently a and $\overline{c}$), 
    \begin{align}\label{AssumptionH3}
        \norm{ H(a(t), \overline{c}(t), b(t)) }_{{C^0(\overline{\Omega})}} \leq \lambda^H(t).
    \end{align}
    Additionally, uniformly for any $(c_1, b_1), (c_2, b_2) \in W_\rho$ and corresponding $a^1, a^2, \overline{c}_1, \overline{c}_2$, we have
    \begin{align}\label{AssumptionH4}
        \norm{ H(a^1(s),b_1(s),\overline{c}_1(s)) - H(a^2(s), b_2(s), \overline{c}_2(s))}_{C^0(\overline{\Omega})} \notag
        \leq \beta^H(t)\norm{\overline{c}_1(s) - \overline{c}_2(s)}_{C^0(\overline{\Omega})} \\
        + \alpha^H(t)\Big[ \norm{a^1(s) - a^2(s)}_{C^0(\overline{\Omega})^N} + \norm{b_1(s) - b_2(s)}_{C^0(\overline{\Omega})} \Big].
    \end{align}
    For $K$ we assume analogous properties, i.e., there is $q \in [1, \infty]$ such that for every bounded $B\subset C^0(\overline{\Omega})$ there are $m^K_B \in L^q(I)$ and $L^K_B \in L^1(I)$ as well as $\lambda^K$, $\beta^K\in L^1(I)$ and $\alpha^K\in L^2(I)$ satisfying estimates as above. 
\end{assumption}

	\begin{theorem}[Existence \& Uniqueness]\label{ExistenceThm}
		Let $\rho \in P$ be fixed and let the Assumptions \ref{assumptionSetting} and \ref{AssumptionOnKAndH} be fulfilled. Then there exist unique functions $u^* = \Tilde{u}^* + u_{g_D^e}$ with $\Tilde{u}^* \in L^2(I,H^1_{D_e}(\Omega)/W)$ and $u_{g_D^e|\Gamma_D^e} = g_D^e$, $a^* = \Tilde{a}^* + 1$ with $\Tilde{a}^* \in H^1(I,H^1_{D_d}(\Omega), H^1_{D_d}(\Omega)')$, $c^* \in W^{1,p}(I,C^0(\overline{\Omega}))$ and $b^* \in W^{1,q}(I,C^0(\overline{\Omega}))$ solving the system \eqref{WeakEquationsBegin} -- \eqref{WeakEquationsEnd}.
		\begin{proof}
			We need to establish the contraction and self-mapping property of $\mathcal{I}$. 
			Let us thus fix two tuples $(c_1,b_1)$ and $(c_2,b_2) \in W_\rho$. We aim to show an estimate of the form
			\begin{align*} 
			    \norm{\mathcal{I}(c_1,b_1) - \mathcal{I}(c_2,b_2)}_{C^0(\overline{I}\times\overline{\Omega})^2} &= \norm{\overline{c}_1 - \overline{c}_2 }_{C^0(\overline{I}\times\overline{\Omega})} +  \norm{\overline{b}_1 - \overline{b}_2}_{C^0(\overline{I}\times\overline{\Omega})}
			    \\&\leq
			    C(I)\big(\norm{c_1 - c_2}_{C^0(\overline{I}\times\overline{\Omega})} + \norm{b_1 - b_2}_{C^0(\overline{I}\times\overline{\Omega})}\big),
			\end{align*}
			where $C(I)\to 0$ with $|I|\to 0$, making $\mathcal{I}$ the desired self-mapping for $T$ small enough.
			\ \\ \ \\
			\emph{The Elastic Equation.}
			We will treat a pure Neumann and a mixed boundary value problem simultaneously. We endow the space $H^1_{D_e}(\Omega)/W$ with the norm $\norm{\varepsilon(\cdot)}_{L^2(\Omega)}$, which by Korn's inequality is equivalent to the natural one on $H^1_{D_e}(\Omega)/W$, see for example \cite{ciarlet2010korn}. By definition of $g_D^e$, there is a function $u_{g_D^e}\in L^2(I,H^1(\Omega)/W)$ such that $u_{g_D^e|\Gamma_D^e} = g_D^e$. In the weak formulation of the elastic equation we seek $\Tilde{u}_i\in L^2(I, H^1_{D_e}(\Omega)/W)$ satisfying
			\begin{equation}\label{WeakElasticEquationNew}
			    \int_I\int_\Omega \mathbb{C}(\rho,\sigma,b_i)\hat{\varepsilon}(\Tilde{u}_i + u_{g_D^e}) : \hat{\varepsilon}(\cdot)\,dxdt = \int_I\langle g_N^e,\cdot \, \rangle_{H^{1/2}(\Gamma^e_N)}dt 
			\end{equation}
			in the space $L^2(I,H_{D_e}^1(\Omega)/W)'$. Then $u_i\coloneqq \Tilde{u}_i + u_{g_D^e}$ is the solution we are interested in. Note that if $\Gamma_D^e$ has vanishing measure, we can choose $u_{g_D^e} = 0$ and $H^1_{D_e}(\Omega)/W = H^1(\Omega)/W$. On the other hand, if $\Gamma_D^e$ has positive measure, then $W\cap H^1_{D_e}(\Omega) = 0$ and $H^1_{D_e}(\Omega)/W = H^1_{D_e}(\Omega)$. The equation \eqref{WeakElasticEquationNew} leads to the operators
			\begin{equation*}
			    \mathcal{T}_{b_i}: L^2(I, H^1_{D_e}(\Omega)/W) \to L^2(I,H^1_{D_e}(\Omega)/W)'
			\end{equation*}
			with
			\begin{equation*}
			    \mathcal{T}_{b_i}v = \int_I\int_\Omega \mathbb{C}(\rho,\sigma,b_i)\hat{\varepsilon}(v):\hat{\varepsilon}(\cdot)\,dxdt
			\end{equation*}
			and right hand sides
			\begin{equation*}
			    f_{b_i} = \underbrace{\int_I\langle g_N^e, \,\cdot \, \rangle_{H^{1/2}(\Gamma_N^e)}dt}_{=:f^N} - \underbrace{\int_I\int_\Omega \mathbb{C}(\rho,\sigma,b_i)\hat{\varepsilon}(u_{g_D^e}):\hat{\varepsilon}(\cdot)\,dxdt}_{=:f^D_{b_i}}.
			\end{equation*}
			By our assumption \eqref{UpperBoundHookLaw} and Korn's inequality the operators $\mathcal{T}_{b_i}$ are coercive with coercivity constant $c_\mathbb{C}$. Applying the Lax-Milgram Lemma we find that there are unique solutions $\Tilde{u}_1$ and $\Tilde{u}_2\in L^2(I, H^1_{D_e}(\Omega)/W)$ to $\mathcal{T}_{b_i}\Tilde{u}_i = f_{b_i}$. By the duality
			\begin{equation*}
			    L^2(I, H^1(\Omega)/W)' = L^2(I, (H^1(\Omega)/W)')
			\end{equation*}
			we know that almost everywhere in $I$ the function $u_i(t) = \Tilde{u}_i(t) + u_{g_D^e}(t)$ satisfies
			\begin{align*}
			    \int_\Omega \mathbb{C}(\rho,\sigma,b_i)(t)\hat{\varepsilon}(\Tilde{u}_i)(t):\hat{\varepsilon}(\cdot)\,dx &= \langle g_N^e(t) \rangle_{H^{1/2}(\Gamma^e_N)}
			    \\&-
			    \int_\Omega \mathbb{C}(\rho,\sigma,b_i)\hat{\varepsilon}(u_{g_D^e}(t)):\hat{\varepsilon}(\cdot)\,dx
			\end{align*}
			in the space $H^1(\Omega)/W$. Using Lax-Milgram again we get using the boundedness and coercivity constants from \eqref{UpperBoundHookLaw}
			\begin{equation*}
			    \norm{u_i(t)}_{H^1(\Omega)/W} \leq c_{\mathbb{C}}^{-1}\Big[ \norm{g_N^e(t)}_{H^{1/2}(\Gamma^e_N)'} + C_{\mathbb{C}}\norm{u_{g_D^e}(t)}_{H^1(\Omega)/W} \Big].
			\end{equation*}
			As the above estimate is independent of $\rho, c_i$ and $b_i$ it holds
			\begin{align}
			    \sup_{\rho,c,b}\norm{u(\rho,b)}_{L^2(I,H^1(\Omega)/W)} &\leq C(I), \label{L2BoundU}
			\end{align}
			where $u(\rho,c,b)$ denotes the solution of the elastic problem to the data $\rho\in P$ and $(c,b)\in W_\rho$. To show that $C(I)$ tends to zero with $|I|\to 0$ we employ the dominated convergence theorem of Lebesgue. Finally we come back to estimate the difference $u_1 - u_2$. We claim that 
			\begin{equation}\label{DifferenceInU}
			    \norm{u_1 - u_2}_{L^2(I,H^1(\Omega)/W)} \leq C(I)\norm{b_1 - b_2}_{C^0(\overline{I}\times \overline{\Omega})}
			\end{equation}
			where again $C(I)\to 0$ with $|I|\to 0$. To establish this, note that $\Tilde{u}_1-\Tilde{u}_2 = u_1 - u_2$ and compute
			\begin{align*}
			    f_{b_1} - f_{b_2} = f_{b_1}^D - f_{b_2}^D &= \mathcal{T}_{b_1}\Tilde{u}_2 - \mathcal{T}_{b_2}\Tilde{u}_2
			    =
			    \mathcal{T}_{b_1}(\Tilde{u}_1 - \Tilde{u}_2) + \mathcal{T}_{b_1}\Tilde{u}_2 - \mathcal{T}_{b_1}\Tilde{u}_2 - \mathcal{T}_{b_2}\Tilde{u}_2.
			\end{align*}
			Hence $\mathcal{T}_{b_1}(u_1 - u_2) = (\mathcal{T}_{b_2}\Tilde{u}_2 - \mathcal{T}_{b_1}\Tilde{u}_2 ) + ( f_{b_1}^D - f_{b_2}^D )$ and using $\norm{\mathcal{T}_{b_1}^{-1}}\leq c_{\mathbb{C}}^{-1}$ we find
			\begin{align*}
			    \norm{u_1 - u_2}_{L^2(I,H^1(\Omega)/W)} &\leq c_{ \mathbb{C} }^{-1}\norm{\mathcal{T}_{b_2}\Tilde{u}_2 - \mathcal{T}_{b_1}\Tilde{u}_2}_{L^2(I, H^1(\Omega)/W)'}
			    \\&+
			    c_{ \mathbb{C} }^{-1}\norm{ f_{b_1}^D - f_{b_2}^D }_{L^2(I, H^1(\Omega)/W)'}.
			\end{align*}
			We estimate the terms of the right hand side using the Lipschitz continuity of $\mathbb{C}$ which we assumed in \eqref{HookeanLawLipEstimate}, combining it with \eqref{L2BoundU} to find
			\begin{align*}
			    \norm{\mathcal{T}_{b_2}\Tilde{u}_2 - \mathcal{T}_{b_1}\Tilde{u}_2}_{L^2(I, H^1(\Omega)/W)'} &\leq L_{ \mathbb{C} }\norm{b_1 - b_2}_{C^0(\overline{I}\times\overline{\Omega})}\norm{\Tilde{u}_2}_{L^2(I,H^1(\Omega)/W)}
			    \\ &\leq
			    L_{ \mathbb{C} }C(I)\norm{b_1 - b_2}_{C^0(\overline{I}\times\overline{\Omega})}
			    \\ & =
			    C(I)\norm{b_1 - b_2}_{C^0(\overline{I}\times\overline{\Omega})}
			\end{align*}
			and
			\begin{align*}
			    \norm{ f_{b_1}^D - f_{b_2}^D }_{L^2(I, H^1(\Omega)/W)'} &\leq L_{ \mathbb{C} }\norm{u_{g^e_D}}_{L^2(I,H^1(\Omega)/W)}\norm{b_1 - b_2}_{C^0(\overline{I}\times\overline{\Omega})}
			    \\&=
			    C(I)\norm{b_1 - b_2}_{C^0(\overline{I}\times\overline{\Omega})}.
			\end{align*}
			\ \\ \ \\
			\noindent\emph{The Diffusion Equations.} Given the functions $c_i, u_i$ with $i=1,2$ and $\rho$, we turn to the diffusion equations. We seek functions $a^i=\Tilde{a}^i+1$ where the $\Tilde{a}^i$ are members of $H^{1}(I,H^1_{D_d}(\Omega),H^1_{D_d}(\Omega)')^N$, that means $a^i=(a^i_1,\dots,a^i_N) = (\Tilde{a}^i_1 + 1,\dots,\Tilde{a}^i_N + 1)$, $i=1,2$, denoting the components of $a^i$ with lower indices. For $j=1,\dots, N$ the $\Tilde{a}^i_j$ are sought to satisfy the following equation in $L^2(I, H^1_{D_d}(\Omega))'$
			\begin{align*}
			\underbrace{\int_{I}\langle d_t\Tilde{a}^i_j,\cdot \, \rangle_{H^1_D}\,dt}_{=:d_t\Tilde{a}^i_j}
			+
			\underbrace{\iint D^\rho_j\nabla \Tilde{a}^i_j \nabla\cdot\, + k^3_j\Tilde{a}_j^i\,\cdot\,dxdt}_{=:\mathcal{M}_j(\rho)\Tilde{a}_j^i}
			=
			\underbrace{\iint (k^2_j|\varepsilon(u_i)|c_i-k^3_j)\,\cdot\,dxdt}_{=:f^j_{u_i,c_i}}
			\end{align*}
			and initial value $\Tilde{a}_j(0)=-1$ in $L^2(\Omega)$. The operators
			\begin{align*}
				(d_t+\mathcal{M}_j(\rho),\operatorname{ev}_0):
				H^{1}(I,H^1_{D_d}(\Omega),H^{1}_{D_d}(\Omega)')
				\to
				L^2(I,H^1_{D_d}(\Omega))'\times L^2(\Omega)
			\end{align*}
			are linear homeomorphisms, see for example \cite{ern2013theory} for a proof, which essentially relies on the coercivity of $\mathcal{M}_j(\rho)$. This explains why we assumed \eqref{Diff_Coerciv} and hence we can guarantee the existence of the $\Tilde{a}^i_j.$ We state two important properties of the solutions $a^i_j$ and their differences $a^1_j - a^2_j$, to which references or proofs can be found in the Appendix \ref{sec:AppendixDiffusion}. The first is a lower pointwise bound, it holds for $j=1,\dots,N$ and $i=1,2$
			\begin{align}\label{APointwiseBound}
				0 \leq 1 + \Tilde{a}^i_j(t,x) = a^i_j(t,x)
				\quad\text{almost everywhere\,\,in }I\times\Omega.
			\end{align}
			This is due to the positivity of the right hand sides $f^j_{u_i,c_i}$.  Secondly, we look at the equations satisfied by the differences $a^1_j - a^2_j$. These equations possess right hand sides $f^j_{u_1,c_1} - f^j_{u_2,c_2}$ in $L^2(I,L^2(\Omega))$ and with $(a^1_j - a^2_j)(0) = 0$ smooth initial conditions. Then, using regularity for mixed boundary value problems, see Theorem \ref{L2CAlphaRegularity}, there is $\alpha > 0$ such that
			\begin{align}\label{ParabolicH2Regularity}
			    \norm{a^1_j - a^2_j}_{L^2(I,C^\alpha(\overline{\Omega}))} \leq C \norm{f^j_{u_1,c_1} - f^j_{u_2,c_2}}_{L^2(I,L^2(\Omega))}.
			\end{align}
			The constant $C$ is uniform in the data $\rho\in P$, $(c,b)\in W_\rho$ and $u(\rho)$. We claim now that we get the following estimate for the difference $a^1 - a^2$
			\begin{align}\label{EstimateDiffusionDifference}
				\norm{a^1-a^2}_{L^2(I,C^\alpha(\overline{\Omega}))^N}
				\leq
				C \Big( \norm{c_1 - c_2}_{C^0(\overline{I}\times\overline{\Omega})} + \norm{u_1-u_2}_{L^2(I,H^1/W)}\Big)
			\end{align}
			with $C$ not blowing up as $|I|\to 0$. This estimate is obtained, using \eqref{ParabolicH2Regularity} and estimating the difference $f^j_{u_1,c_1} - f^j_{u_2,c_2}$. It holds
			\begin{equation*}
			f^j_{c_1,u_1} - f^j_{c_2,u_2} = k_2^j|\varepsilon(u_1)|_\delta(c_1 - c_2) + k^j_2(|\varepsilon(u_1)|_\delta - |\varepsilon(u_2)|_\delta)c_2.
			\end{equation*}
			Using the fact that $c_1$ takes values in the unit interval and the assumptions on $|\cdot|_\delta$, see \ref{absvalueDeltaEstimate}, it follows
			\begin{align*}
			    \norm{f^j_{c_1,u_1} - f^j_{c_2,u_2}}_{L^2(I,L^2(\Omega))} &\leq C\big( \norm{\varepsilon(u_1)}_{L^2(I,L^2(\Omega))} + 1\big) \norm{c_1 - c_2}_{C^0(\overline{I}\times\overline{\Omega})}
			    \\&+ 
			    C\norm{\varepsilon(u_1 - u_2)}_{L^2(I,L^2(\Omega))}.
			\end{align*}
			Invoking \eqref{L2BoundU} we know that $\norm{\varepsilon(u_1)}_{L^2(I,L^2(\Omega))}$ is bounded uniformly in the data $\rho\in P$ and $(c,b)\in W_\rho$. Combining this with the identity
			\begin{equation*}
			    \norm{\varepsilon(u_1 - u_2)}_{L^2(I,L^2(\Omega))} = \norm{u_1 - u_2}_{L^2(I,H^1/W)}
			\end{equation*}
			we conclude.
			\ \\ \ \\
			\noindent\emph{The Cell ODE.}
		    We turn now to the Cell ODE and solve this equation twice, once with data $\rho,a_1^1,\dots, a_N^1$ and $b_1$, producing a function $\overline{c}_1$, and once with $\rho, a_1^2,\dots, a_N^2$ and $b_2$ yielding $\overline{c}_2$. The solutions $\overline{c}_1$ and $\overline{c}_2$ are members of the space $W^{1,p}(I,C^0(\overline{\Omega}))$ and consequently of $C^0(\overline{I}\times\overline{\Omega})$ satisfying $0\leq \overline{c}(t,x)\leq 1-\rho(x)$ solving the ODE
		    \begin{equation*}
			d_t\overline{c}_i = H(a^i_1,\dots,a^i_N,b_i,\overline{c}_i)\bigg( 1 - \frac{\overline{c}_i}{1 - \rho} \bigg) \quad\text{with}\quad \overline{c}_i(0) = 0.
			\end{equation*}
			These facts are proven as Lemma \ref{SolveCellODE} in the Appendix. Our goal is to estimate the difference $\overline{c}_1 - \overline{c}_2$ and we claim that it holds
			\begin{equation}\label{EstimateCellODE}
			    \norm{\overline{c}_1 - \overline{c}_2}_{C^0(\overline{I}\times\overline{\Omega})} \leq C(I)\Big( \norm{a^1 - a^2}_{L^2(I,C^\alpha(\overline{\Omega})^N)} + \norm{b_1 - b_2}_{C^0(\overline{I}\times\overline{\Omega})} \Big)
			\end{equation}
			where $C(I)$ tends to zero with $|I|\to 0$. To prove the estimate \ref{EstimateCellODE} we use the fundamental theorem of the space $W^{1,p}(I,C^0(\overline{\Omega}))$ and write 
			\begin{align*}
			    \overline{c}_i(t) &= \int_0^t \underbrace{ H(a^i(s),b_i(s), \overline{c}_i(s))\bigg( 1 - \frac{\overline{c}_i(s)}{1 - \rho} \bigg)}_{=:\gamma_i(s)}\,ds.
			\end{align*}
			By Assumption \ref{AssumptionOnKAndH} the expression $\gamma_1(s) - \gamma_2(s)$ can be estimated
			\begin{align*}
			    \norm{\gamma_1(s) - \gamma_2(s)}_{C^0(\overline{\Omega})} &\leq \norm{ H(a^1, b_1, \overline{c}_1)(s) - H(a^2,b_2, \overline{c}_2)(s) }_{C^0(\overline{\Omega})} \norm{1 - \frac{\overline{c}_1(s)}{1 - \rho}}_{C^0(\overline{\Omega})} 
			    \\&+
			    \norm{H(a^2,b_2, \overline{c}_2)(s)}_{C^0(\overline{\Omega})}\norm{(1 - \rho)^{-1}}\,\norm{\overline{c}_1(s) - \overline{c}_2(s)}_{C^0(\overline{\Omega})}
			    \\&\leq
			    C\alpha^H(s)\Big[ \norm{a^1(s) - a^2(s)}_{C^0(\overline{\Omega})^N} + \norm{b_1(s) - b_2(s)}_{C^0(\overline{\Omega})} \Big]
			    \\&+
			    C(\beta^H(s) + \lambda^H(s))\norm{\overline{c}_1(s) - \overline{c}_2(s)}_{C^0(\overline{\Omega})}.
			\end{align*}
			Now we can apply Gr\"onwall's and H\"older's inequality, using that $\alpha^H\in L^2(I)$ and $\beta^H, \lambda^H \in L^1(I)$ to obtain
			\begin{align*}
			    \norm{\overline{c}_1(t) - \overline{c}_2(t)}_{C^0(\overline{\Omega})} &\leq C \int_I \alpha^H(s) \Big[ \norm{ a^1(s)- a^2(s) }_{C^0(\overline{\Omega})^N} + \norm{ b_1(s) - b_2(s)}_{C^0(\overline{\Omega})} \Big]
			    \\&\leq
			    C\norm{\alpha^H}_{L^2(I)}\Big[  \norm{a_1 - a_2 }_{L^2(I,C^0(\overline{\Omega})^N)} + \norm{b_1 - b_2}_{L^2(I,C^0(\overline{\Omega}))} \Big]
			    \\&\leq
			    C(I)\Big[ \norm{a^1 - a^2}_{ L^2(I,C^\alpha(\overline{\Omega}))} + \norm{b_1 - b_2}_{C^0(\overline{I}\times\overline{\Omega})} \Big].
			\end{align*}
			Here we used that $\norm{\alpha^H}_{L^2(I)}\to 0$ with $|I|\to 0$, which follows from Lebesgue's dominated convergence theorem.
			As the right side of the estimate is independent of $t\in I$, this shows that \eqref{EstimateCellODE} holds. 
			\ \\ \ \\
			\noindent\emph{The Bone ODE.} Finally we treat the Bone ODE. Again we solve it twice, with data $a^i_1,\dots,a^i_N, c_i$ and $\rho$ producing $\overline{b}_i$ with $i=1,2$. The functions $\overline{b}_1$ \& $\overline{b}_2$ are members of $W^{1,q}(I,C^0(\overline{\Omega}))$ and consequently of $C^0(\overline{I}\times\overline{\Omega})$ satisfying $0\leq \overline{b}_i(t,x)\leq 1 - \rho(x)$ and 
			\begin{equation*}
			    d_t\overline{b}_i = K(a^i_1,\dots a_N^i,\overline{b}_i,c_i)\bigg( 1 - \frac{\overline{b}_i}{1 - \rho} \bigg)
			\end{equation*}
			This means that $\overline{b}_i\in W_\rho$, hence making the iteration map $\mathcal{I}$ a self mapping. All these properties are established as in the case of the Cell ODE. Repeating our computations for $\overline{c}_1 - \overline{c}_2$ we find
			\begin{align}\label{EstimateBoneAndCellODE}
				\norm{\overline{b}_1-\overline{b}_2}_{C^0(\overline{I}\times\overline{\Omega})}
				\leq
				C(I) \Big( \norm{a^1-a^2}_{L^2(I,C^\alpha(\overline{\Omega})^N)} + \norm{c_1-c_2}_{C^0(\overline{I}\times\overline{\Omega})} \Big).
			\end{align}
			and the constant $C(I)$ tends to zero as $|I|\to 0$. 
			\ \\ \ \\
			\noindent\emph{Contraction Property of $\mathcal{I}$.} We collect all estimates to see that $\mathcal{I}$ is a contractive self-mapping for $|I|$ small enough. Use \eqref{EstimateBoneAndCellODE}, \eqref{EstimateCellODE}, \eqref{EstimateDiffusionDifference}, and \eqref{DifferenceInU} to conclude
			\begin{align*}
			\norm{\overline{b}_1-\overline{b}_2}_{C^0(\overline{I}\times\overline{\Omega})}
			&\leq
			C(I)\Big( \norm{a^1 - a^2}_{L^2(I,C^\alpha(\overline{\Omega})^N)} + \norm{c_1 - c_2}_{C^0(\overline{I}\times\overline{\Omega})} \Big)
			\\&\leq
		    C(I)\Big( \norm{u^1 - u^2}_{L^2(I,H^1(\Omega)/W)} + \norm{c_1 - c_2}_{C^0(\overline{I}\times\overline{\Omega})} \Big)
			\\
			&\leq
			C(I)\Big( \norm{b_1 - b_2}_{C^0(\overline{I}\times\overline{\Omega})} + \norm{c_1 - c_2}_{C^0(\overline{I}\times\overline{\Omega})}  \Big).
			\end{align*}
			and the estimate for $\norm{\overline{c}_1-\overline{c}_2}_{C^0(\overline{I}\times\overline{\Omega})}$ works identically. Consequently, it holds
			\begin{align*}
			    \norm{\mathcal{I}(c_1,b_1) - \mathcal{I}(c_2,b_2)}_{C^0(\overline{I}\times \overline{\Omega})^2} \leq C(I)\Big[\norm{c_1 - c_2}_{C^0(\overline{I}\times \overline{\Omega})} + \norm{b_1 - b_2}_{C^0(\overline{I}\times \overline{\Omega})}\Big]
			\end{align*}
			As $C(I)\to 0$ with $|I|\to 0$, the contraction map principle implies that $\mathcal{I}:(c,b)\mapsto (\overline{c},\overline{b})$ possesses a unique fix point for $|I|$ small enough.
			\ \\ \ \\
	\noindent\emph{Long-Time Existence.} We established the existence of a solution $(u^*,a^*,c^*,b^*)$ on an interval $[0,T]$ where $T>0$ is chosen to make $\mathcal{I}$ a contraction. Now we use the well defined functions $c^*(T,\,\cdot\,)$ and $b^*(T,\,\cdot\,)\in C^0(\overline{\Omega})$ as initial data for the ODEs and as $a^*\in C^0([0,T],L^2(\Omega)^N)$ the function $a^*(T)\in L^2(\Omega)^N$ serves as start value for the diffusion equations. Repeating the computations we find that there exists a unique solution $(u^{**},a^{**},c^{**},b^{**})$ to the system on the interval $[T-\varepsilon,2T-\varepsilon]$ for some small $\varepsilon>0$. On the overlap $[T-\varepsilon,T]$ the solutions $(u^{**},a^{**},c^{**},b^{**})$ and $(u^*,a^*,c^*,b^*)$ agree and thus we found the unique solution on the interval $[0,2T-\varepsilon]$. As $T$ does not depend on the initial values of neither $a^*$, $c^*$ nor $b^*$ this iterates to span every finite time interval.
	\end{proof}
	\end{theorem}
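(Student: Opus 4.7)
The plan is to realize the quadruple $(u^*,a^*,c^*,b^*)$ as a fixed point of the iteration operator $\mathcal{I}:W_\rho\to W_\rho$, $(c,b)\mapsto(\overline{c},\overline{b})$, sketched in the introduction to this section, and to apply Banach's contraction principle on the complete metric space $W_\rho$ equipped with the $C^0(\overline{I}\times\overline{\Omega})^2$-norm. I would first show that $\mathcal{I}$ is well-defined and then establish, for $(c_i,b_i)\in W_\rho$, an estimate of the form
\[
\|\mathcal{I}(c_1,b_1)-\mathcal{I}(c_2,b_2)\|_{C^0(\overline{I}\times\overline{\Omega})^2}\le C(I)\bigl(\|c_1-c_2\|_{C^0}+\|b_1-b_2\|_{C^0}\bigr)
\]
with $C(I)\to 0$ as $|I|\to 0$; choosing $T$ small enough makes $\mathcal{I}$ a strict self-contraction. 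Long-time existence is then recovered by restarting from $(c^*(T,\cdot),b^*(T,\cdot),a^*(T,\cdot))$, noting that the contraction time $T$ depends only on $\rho$ and the elastic boundary data, not on the initial values.

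For the elastic step I would fix $b_i\in W_\rho$ and write $u_i=\tilde u_i+u_{g_D^e}$, then apply Lax--Milgram to the coercive bilinear form on the quotient $H^1_{D_e}(\Omega)/W$ (coercivity coming from Korn's inequality together with \eqref{UpperBoundHookLaw}) to obtain a unique $\tilde u_i\in L^2(I,H^1_{D_e}/W)$ and a uniform bound $\|u_i\|_{L^2(I,H^1/W)}\le C(I)$. The difference would be controlled via the identity $\mathcal{T}_{b_1}(u_1-u_2)=(\mathcal{T}_{b_2}-\mathcal{T}_{b_1})\tilde u_2+(f^D_{b_1}-f^D_{b_2})$ combined with the Lipschitz estimate \eqref{HookeanLawLipEstimate} on $\mathbb{C}$, yielding \eqref{DifferenceInU} with $C(I)\to 0$ by dominated convergence. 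Feeding $(u_i,c_i)$ into the diffusion step, existence and uniqueness of $\tilde a^i_j\in H^{1,2,2}(I,H^1_{D_d},H^1_{D_d}')$ follows from the fact that $(d_t+\mathcal{M}_j(\rho),\operatorname{ev}_0)$ is a linear homeomorphism, and pointwise positivity $a^i_j\ge 0$ follows from positivity of $f^j_{u_i,c_i}$.

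The central technical obstacle is that the ODE sources $H$ and $K$ require a $C^0(\overline{\Omega})$-valued control of $a$ in time, not merely an $H^1$-valued one. This is precisely the point where the Gr\"oger regularity of $\Omega\cup\Gamma^d_N$ is used: the appendix result on parabolic mixed boundary value problems gives an $\alpha>0$ and an embedding
\[
\|a^1-a^2\|_{L^2(I,C^\alpha(\overline{\Omega}))^N}\le C\,\|f_{u_1,c_1}-f_{u_2,c_2}\|_{L^2(I,L^2(\Omega))}.
\]
The right-hand side would then be decomposed as $k^j_2|\varepsilon(u_1)|_\delta(c_1-c_2)+k^j_2(|\varepsilon(u_1)|_\delta-|\varepsilon(u_2)|_\delta)c_2$ and bounded using the Lipschitz/growth condition \eqref{absvalueDeltaEstimate} on $|\cdot|_\delta$, the uniform bound on $\|\varepsilon(u_1)\|_{L^2(I,L^2)}$, and the pointwise bound $c_2\le 1-\rho$.

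For the two ODEs I would invoke the Banach-space valued existence theory from Appendix \ref{sec:AppendixODE}, which together with Assumption \ref{AssumptionOnKAndH} yields unique $\overline c_i\in W^{1,p}(I,C^0(\overline{\Omega}))$ and $\overline b_i\in W^{1,q}(I,C^0(\overline{\Omega}))$ staying in $[0,1-\rho]$. Writing $\overline c_i(t)=\int_0^t H(a^i(s),b_i(s),\overline c_i(s))(1-\overline c_i/(1-\rho))\,ds$, splitting via \eqref{AssumptionH3}--\eqref{AssumptionH4}, and applying Gr\"onwall together with H\"older on $\alpha^H\in L^2(I)$ and $\beta^H,\lambda^H\in L^1(I)$ yields
\[
\|\overline c_1-\overline c_2\|_{C^0}\le C(I)\bigl(\|a^1-a^2\|_{L^2(I,C^\alpha)^N}+\|b_1-b_2\|_{C^0}\bigr),
\]
with $C(I)\to 0$ by dominated convergence; the same argument handles $\overline b_1-\overline b_2$. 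Chaining these four estimates closes the contraction inequality, so Banach's theorem gives a unique fixed point on a short interval, and the continuation argument extends it to all of $I$.
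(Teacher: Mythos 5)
Your proposal follows essentially the same strategy as the paper's own proof: Banach's fixed point theorem for the iteration operator $\mathcal{I}$ on $W_\rho$, with Lax--Milgram on the quotient $H^1_{D_e}(\Omega)/W$ for the elastic step, the linear homeomorphism $(d_t+\mathcal{M}_j(\rho),\operatorname{ev}_0)$ plus the Gr\"oger-regularity-based $L^2(I,C^\alpha(\overline{\Omega}))$ estimate for the diffusion step, Gr\"onwall and H\"older for the ODE steps, and restart-based continuation for long-time existence. The decomposition $\mathcal{T}_{b_1}(u_1-u_2)=(\mathcal{T}_{b_2}-\mathcal{T}_{b_1})\tilde u_2+(f^D_{b_1}-f^D_{b_2})$, the splitting of $f^j_{u_1,c_1}-f^j_{u_2,c_2}$, and the dominated-convergence argument making $C(I)\to 0$ are all the same as in the paper; this is a faithful reconstruction of the argument.
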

	\begin{remark}\label{RemarkAssumptionODE}
	    We discuss the validity of Assumption \ref{AssumptionOnKAndH}, which is given in an implicit form. We treat roughly two cases. Either, only treating pure Dirichlet problems for the diffusion equations or assuming strong properties for $|\cdot|_\delta$, one can establish an $L^\infty(I \times \Omega)$ bound on the solutions of the diffusion equations and can then apply Lemma \ref{howToHandleAssumption33}, or one is allowed to only multiply at most two different components of $a$ in order not to violate the integrability. Furthermore we will always assume that we are in the setting of section \ref{sectionMathematicalFormulation}.
	    \begin{itemize}
	        \item [(i)] Assume that $|\cdot|_\delta:\mathbb{R}^{n \times n}\to \mathbb{R}$ is a bounded function. Then the solution to the diffusion equations lie in $L^\infty(I \times \Omega)$ with a bound on the uniform norm not depending on $\rho \in P$ and $b \in W_\rho$. In view of Lemma \ref{howToHandleAssumption33} this implies that Assumption \ref{AssumptionOnKAndH} holds.
	        \item[(ii)] Assume that we consider a pure Dirichlet problem for the diffusion equations and that the Dirichlet data on the parabolic boundary lies in the space $L^\infty(I,L^\infty(\partial\Omega))$. Theorem 7.1 and Corollary 7.1 in \cite{ladyzhenskaia1968linear} show that the solutions of the diffusion equations are members of $L^\infty(I\times\Omega)$ with a uniform bound on their norms. Here one crucially needs the Dirichlet information on the parabolic boundary, thus the assumptions. We currently do not know whether a similar result is available in the case of mixed boundary conditions.
	        \item[(iii)] If we do not assume anything besides the setting of section \ref{sectionMathematicalFormulation}, in the choice of $H$ and $K$ not more than two of the components of $a$ should be multiplied. In particular the choice $|\cdot|_\delta = |\cdot|$ is covered.  A concrete example is given in section \ref{sectionNumericalExperiments}. 
	    \end{itemize}
	\end{remark}
	\begin{lemma}\label{howToHandleAssumption33}
	    Let Assumption \ref{assumptionSetting} hold and assume that for any choice of $\rho$ and $b\in W_\rho$ the function $a\in H^1(I,H^1_{D_d}(\Omega), H^1_{D_d}(\Omega)')^N$ produced by the iteration operator $\mathcal{I}$ is a member of $L^\infty(I\times \Omega)^N$ with a bound on the $L^\infty(I \times \Omega)$ norm that is uniform in $\rho\in P$ and $b\in W_\rho$. Then Assumption \ref{AssumptionOnKAndH} is satisfied.
	    \begin{proof}
	        For a fixed bounded set $B\subset C^0(\overline{\Omega})$ the following subset of $\mathbb{R}^{N + 2}$
	        \[
	            \{ ( a(t,x), \Tilde{c}(t,x), b(t,x) ) \mid \Tilde{c} \in B, \ (t,x)\in I\times\Omega \}
	        \]
	        is relatively compact. By the continuity of $H$ we can choose $m_B^H$ to be a constant function, i.e., a member of $L^\infty(I)$. Using the Lipschitz continuity of $H$ on the set defined above, we are able to establish property \eqref{AssumptionH2}. Now let $b_1$ and $b_2\in W_\rho$ be given and correspondingly $a^1, a^2, c^1$ and $c^2$. Using that the set 
	        \[
	            \{ ( a^i(t,x), c_i(t,x), b_i(t,x) ) \mid i = 1,2 \text{ and }(t,x) \in I \times \Omega \}
	        \]
	        is bounded independently of $\rho, b^1, b^2$ etc.\,\,we may use again the above reasoning and obtain that \eqref{AssumptionOnKAndH} holds with a constant function $\lambda^H$ and \eqref{AssumptionOnKAndH} with constant functions $\beta^H$ and $\alpha^H$. The remaining requirements in assumption \ref{AssumptionOnKAndH} are satisfied likewise.
	    \end{proof}
	\end{lemma}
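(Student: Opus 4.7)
The plan is to exploit the fact that the hypothesised uniform $L^\infty$ bound on $a$ confines all pointwise arguments of $H$ and $K$ to a fixed compact subset of $\mathbb{R}^{N+2}$, on which the assumed local Lipschitz continuity of these functions immediately yields uniform bounds and Lipschitz constants. The moduli demanded in Assumption \ref{AssumptionOnKAndH} can then be taken to be constants in $t$, which lie in every $L^p(I)$ since $I$ is bounded.

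Concretely, let $M_a$ denote the postulated uniform bound on $\|a\|_{L^\infty(I\times\Omega)^N}$, fix a bounded $B\subset C^0(\overline{\Omega})$ of radius $M_B$, and recall that $0\le b \le 1-\rho \le 1$ for any $b\in W_\rho$. For \eqref{AssumptionH1} and \eqref{AssumptionH2} I would form the compact set $K_B := [-M_a,M_a]^N \times [-M_B,M_B] \times [0,1] \subset \mathbb{R}^{N+2}$. Continuity of $H$ yields a finite maximum of $|H|$ on $K_B$, which serves as a constant function $m_B^H$ lying in $L^p(I)$ for every $p$. Local Lipschitz continuity of $H$ produces a finite Lipschitz constant on $K_B$; combined with the uniform bound on the diameter of $B$ in $C^0(\overline{\Omega})$, this yields a constant $L_B^H \in L^1(I)$ verifying \eqref{AssumptionH2}.

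For \eqref{AssumptionH3} and \eqref{AssumptionH4} I would work with the slightly smaller compact set $K := [-M_a,M_a]^N \times [0,1]^2$, which contains every pointwise triple $(a^i(t,x),\overline c_i(t,x), b_i(t,x))$ arising from any admissible data $(c_i,b_i)\in W_\rho$, since $\overline c_i \in W_\rho$ as well. Boundedness of $H$ on $K$ provides a constant $\lambda^H$ for \eqref{AssumptionH3}. Lipschitz continuity of $H$ on $K$ with some constant $L_K$ gives
\begin{align*}
\norm{H(a^1,\overline c_1, b_1) - H(a^2,\overline c_2,b_2)}_{C^0(\overline{\Omega})}
\le L_K\bigl(\norm{a^1-a^2}_{C^0(\overline{\Omega})^N} + \norm{\overline c_1 - \overline c_2}_{C^0(\overline{\Omega})} + \norm{b_1-b_2}_{C^0(\overline{\Omega})}\bigr),
\end{align*}
so taking $\alpha^H = \beta^H = L_K$ (constant, hence in $L^2(I)$ and $L^1(I)$) verifies \eqref{AssumptionH4}. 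The argument for $K$ is verbatim the same, producing constants $m_B^K, L_B^K, \lambda^K, \alpha^K, \beta^K$.

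I do not anticipate a real obstacle: the lemma is essentially a bookkeeping step that records how the a priori $L^\infty$ control on $a$ trivialises the otherwise delicate integrability conditions in Assumption \ref{AssumptionOnKAndH}. The only care required is to check that all constants produced depend solely on $M_a$, $M_B$, and the compact set $K$, so that they are genuinely uniform in $\rho\in P$ and $b\in W_\rho$; this is transparent from the construction, since the $L^\infty$ bound on $a$ is assumed to be uniform in exactly these data.
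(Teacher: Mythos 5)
Your proof is correct and takes essentially the same approach as the paper: the uniform $L^\infty$ bound on $a$, together with the pointwise bounds $0\le b,\overline c\le 1$, confines all arguments of $H$ and $K$ to a fixed compact subset of $\mathbb{R}^{N+2}$, on which continuity and local Lipschitz continuity of $H$ and $K$ give constant moduli lying in every $L^p(I)$. Your write-up is slightly more explicit in constructing the compact sets $K_B$ and $K$ and in recording that $\overline c_i\in W_\rho$ forces $\overline c_i\in[0,1]$ pointwise, but the substance matches the paper's argument exactly.
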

\section{Numerical Experiments}\label{sectionNumericalExperiments}
In \cite{cipitria2015bmp} porous PCL scaffolds with a periodic honeycomb structure and $87\%$ porosity were used as a treatment strategy for $30\text{mm}$ tibial defects in an ovine model. This experiment was conducted in two groups, one preseeding the scaffold with a special bio-active molecule (BMP) and the second group without such preseeding. Here, we aim to numerically recreate the experiment without preseeding, using a concrete instance of our computational model.

As usual, the experimental setup in \cite{cipitria2015bmp} includes the use of a so-called fixateur -- a titanium or steel plate that is fixed to the bone surrounding the defect site using screws. This fixateur is used to provide additional mechanical stability. We include this device in a simplistic manner in our simulations, neglecting the effect of screws. From a modeling perspective, the fixateur acts as a stress shield on one side of the defect and thus influences bone growth significantly.

 As a concrete instance of our model we use two bioactive molecules and consider the following system of equations
\begin{align*}
    0 &= \operatorname{div}\Big( \mathbb{C}(\rho,\sigma, b)\varepsilon(u) \Big)
    \\
    d_ta_1 &= \operatorname{div}\Big(  D(\rho) \nabla a_1 \Big) + k_{2,1}|\varepsilon(u)|c - k_{3,1}a_1
    \\
    d_ta_2 &= \operatorname{div}\Big(  D(\rho) \nabla a_2 \Big) + k_{2,2}|\varepsilon(u)|c - k_{3,2}a_2
    \\
    d_tc &= k_6a_1a_2(1 + k_7c)\bigg( 1 - \frac{c}{1 - \rho} \bigg)
    \\
    d_tb &= k_4a_1c\bigg( 1 - \frac{b}{1 - \rho} \bigg).
\end{align*}
We use mixed boundary values for the elastic equilibrium equation, with a surface traction stemming from a force of $0.3$kN on the top of the cylinder in the  model with fixateur. The bottom of the computational domain is assumed to be fixed, i.e., subjected to zero Dirichlet boundary conditions and the remaining part of the boundary is subject to zero stress boundary conditions. These boundary conditions are chosen to represent the maximal stress that repeatedly occurs, having an ovine model in mind, where a specimen can weigh between 45--160kg. For a healthy individual without bone defect, we assume a force of $2.25$kN. This difference is important as it will influence the choice of the generation and decay rate of the bio-active molecules that are normalized for healthy bone.
For the bio-active molecules we assume that they are present in saturation, i.e. $a_1(t,x) = a_2(t,x) = 1$, adjacent to bone and otherwise we assume a non-flux boundary condition. Osteoblast and bone density is set to zero at the initial time-point. Note that the concrete choice of boundary conditions here should be considered a proof of concept. Further, more detailed numerical studies are forthcoming. 
\subsection{Model Parameters}
We report the choices for the constants and functional relationships in table \ref{tableConstants}. Some comments are in order.
\begin{itemize}
    \item [(a)] In a healthy individual, given appropriate clinical interventions, bone defects should be completely bridged with low to medium weight-bearing capacity after 6 months, see \cite{zimmermann2010trauma}. The bone remodelling process to follow can take 3 to 5 years until the full function of the bone is restored. We therefore consider a time span of 12 months for our model, which we identified as the critical phase for scaffold mediated bone healing.
    \item [(b)] The PCL decay parameter, $k_1$, is based on the experimental studies in \cite{pitt1981aliphatic}, which shows that after one year $30\%$ of the molecular mass remains. 
    \item [(c)] The surface traction is set to $0.001$gPa corresponding to a force of $0.3$kN over a surface of $300\operatorname{mm}^2$. We propose to view this time-constant surface traction as an averaged maximal stress. Furthermore we assume that due to the injury this averaged maximal stress is considerably lower than what is to be expected in a healthy individual, where we set it to $0.0075$gPa corresponding to the aforementioned $2.25$kN.
    \item [(d)] The constants $k_{2,i}, k_{3,i}$, $i = 1,2$ governing the generation and decay of bioactive molecules are difficult to obtain from the literature compare for example to the discussion in \cite{poh2019optimization}. The values for $k_{3,1}$ and $k_{3,2}$ correspond to a half-life of 31 and 62 hours respectively and are chosen to achieve a realistic model outcome. Consequently generation rate constants $k_{2,1}$ and $k_{2,2}$ are chosen such that a surface traction of $0.0075$gPa -- corresponding to a force of $2.25$ kN over a surface of $300\textrm{mm}^2$ -- results in an equilibrium state for $a_1 = a_2 = 1$ when $c = 1$, that is when the concentration of osteoblast equals that of healthy bone.
    \item [(e)] The diffusivity $D(\rho) = k_5(1 - \rho)$ is controlled by the porosity $1 - \rho$ and the constant $k_5$. With $k_5 = 260 \textrm{mm}^2/\textrm{month}$ we set it to a standard value for the diffusion of bioactive molecules that is measured for soluble proteins, see \cite{badugu2012digit} and \cite{yu2009fgf8}.
    \item [(f)] We use Voigt's bound as an approximation of the material properties of the bone-scaffold composite. More precisely, we model bone and PCL as linear isotropic materials with material constants chosen as collected in Table \ref{tableConstants}. The effective properties of the compositum are then obtained by adding the weighted tensors.
    \item [(g)] The constant $k_4$ drives the rate of bone regeneration, $k_6$ is related to the overall osteoblast production and $k_7$ influences the effect of osteoblast proliferation. These values are fitted to achieve realistic outcome in the simulations. 
\end{itemize}
\begin{table}[h!]
\centering
\caption{Parameters for the bone regeneration model}
\label{tableConstants}
 \begin{tabular}{| c | c | c |} 
 \hline
 \rowcolor{lightgray}
 \textbf{Param.} & \textbf{Value} & \textbf{Description} \\ [0.5ex] 
 \hline\hline
 $T$ & $12$ months & Period of bone regeneration \\ 
 \hline
 $\Omega$ & $L = 30$mm, $r = 10$mm & Cylinder with length $L$, radius $r$ \\
 \hline
 $\rho$ & $\rho \equiv 0.13 $ & Scaffold volume fraction \\
 \hline
 $\mathbb{C}(\rho,\sigma,b)$ & $b\mathcal{C}_b + \rho\sigma\mathcal{C}_\rho$ & Voigt bound for composites\\
 \hline
 $D(\rho)$ & $k_5(1 - \rho)$ & Diffusivity of bioactive molecules\\
 \hline
 $(\lambda_b,\mu_b)$ & (2.88GPa, 1.92GPa) & Derived from $(E_b,\nu_b) = (5\operatorname{GPa}, 0.3)$ \\
 \hline
 $(\lambda_\rho, \mu_\rho)$ & (1.97GPa, 0.17GPa) & Derived from $(E_\rho,\nu_\rho) = (0.5\operatorname{GPa}, 0.46)$ \\
 \hline
 $\mathcal{C}_b$ & $\mathcal{C}_b A = \lambda_b\operatorname{tr}(A)\operatorname{Id} + 2\mu_b A$ & Material tensor of healthy bone \\
 \hline
 $\mathcal{C}_\rho$ & $\mathcal{C}_\rho A = \lambda_\rho\operatorname{tr}(A)\operatorname{Id} + 2\mu_\rho A$ & Material tensor of PCL \\
 \hline
 $k_{1}$ & $0.1$ per month & PCL absorbation rate constant \\
 \hline
 $k_{2,1}$ & $10500$ & Generation rate first molecule \\
 \hline
 $k_{2,2}$ & $5250$ & Generation rate second molecule \\
 \hline
 $k_{3,1}$ & $16$ & Decay rate first molecule \\
 \hline
 $k_{3,2}$ & $8$ & Decay rate second molecule \\
 \hline
 $k_{4}$ & $0.2$ & Bone regeneration constant \\
 \hline
 $k_5$ & $260\operatorname{mm}^2/\operatorname{month}$ & Diffusivity of the $a_i$ w/o scaffold \\
 \hline
 $k_{6}$ & $0.5$ & Osteoblast generation constant \\
 \hline
 $k_{7}$ & $0.07$ & Proliferation constant for osteoblasts \\
 \hline
\end{tabular}
\end{table}
\begin{figure}
    \centering
    \includegraphics[width=12.5cm]{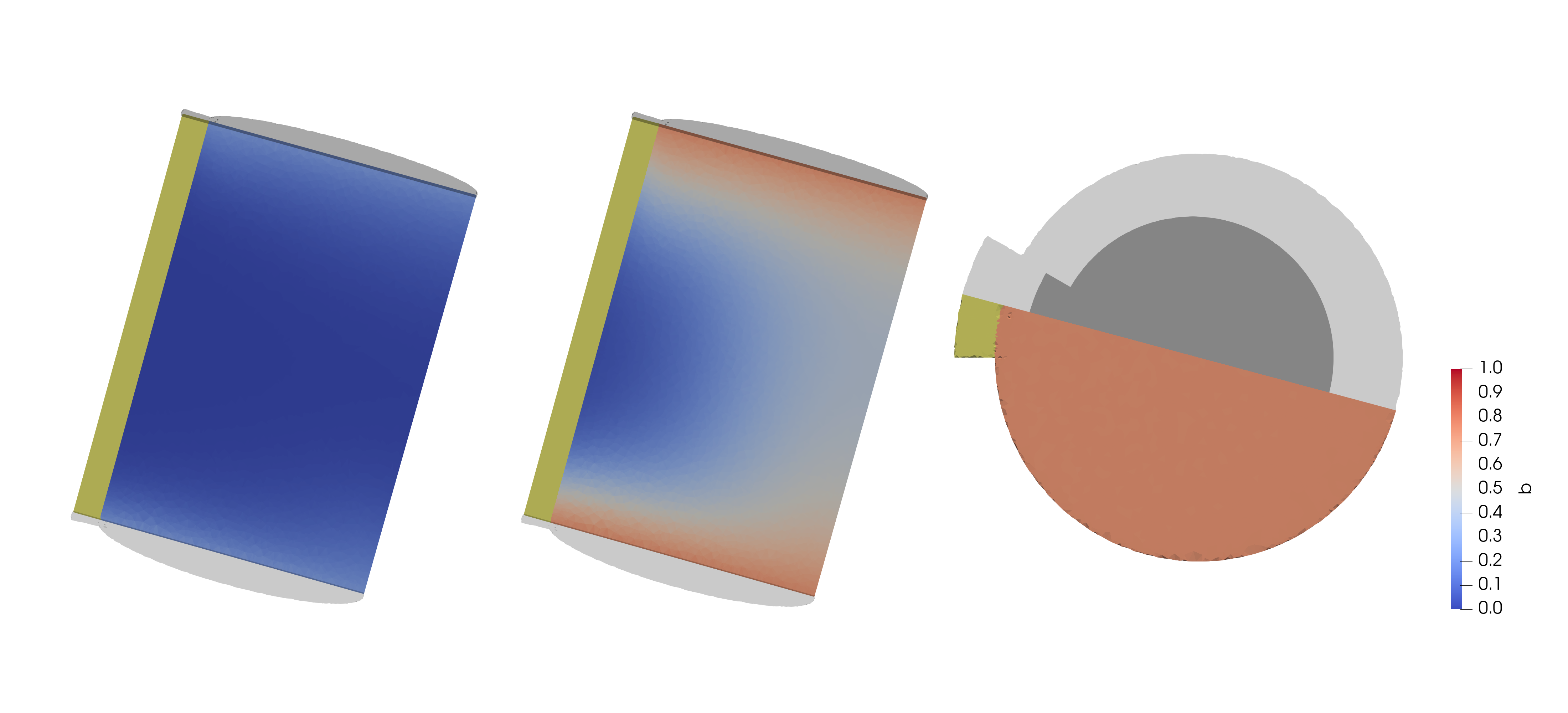}
    \caption{Experiment including fixateur. Shown is a vertical section through the cylindrical defect site. Fixateur domain is colored in gold. From left to right: regenerated bone at 3 months, 12 months and a view on top of the defect site. The grey colored areas illustrate the top and bottom cylinder/fixateur caps.}
    \label{Fixateur3and12Months}
\end{figure}
\begin{figure}
    \centering
    \includegraphics[width=12.5cm]{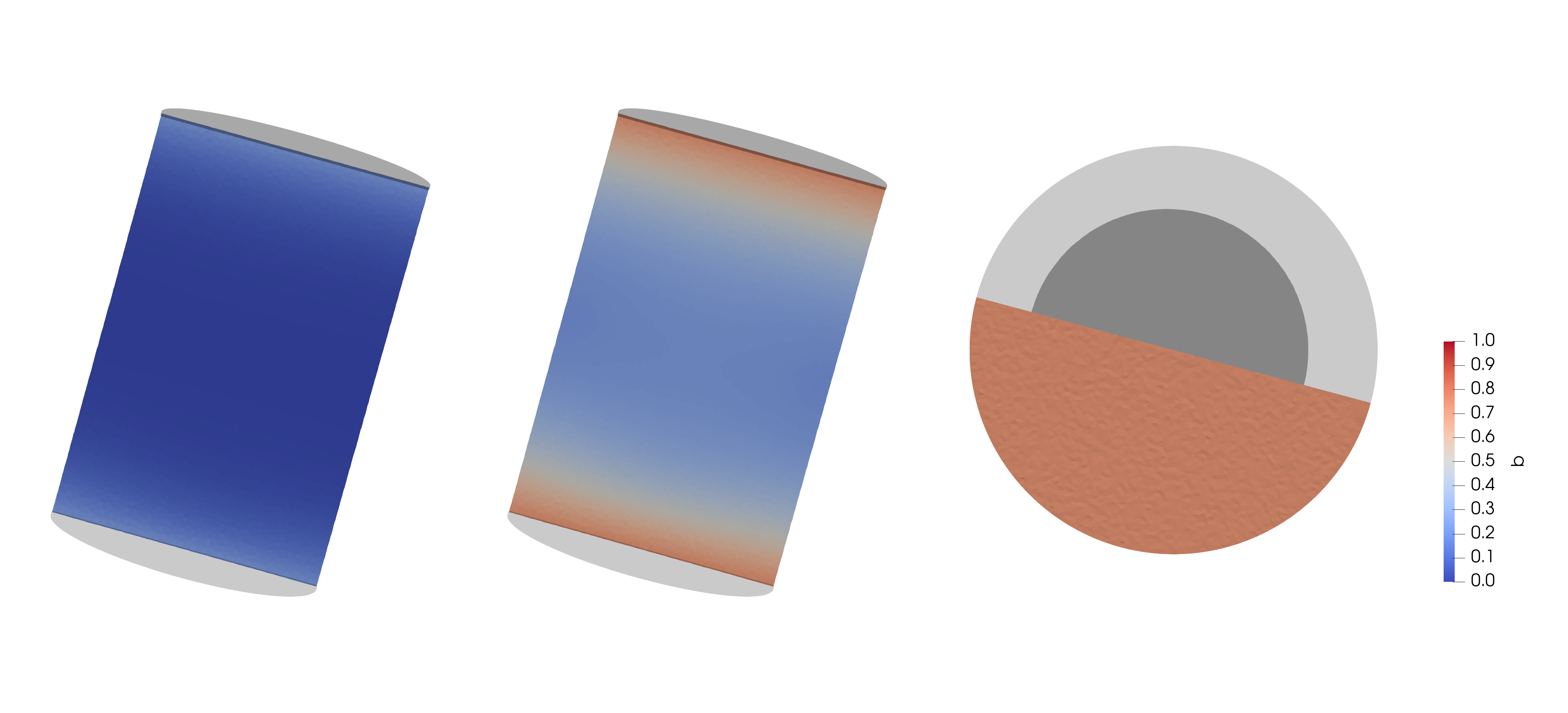}
    \caption{Experiment excludig fixateur. Shown is a vertical section through the cylindrical defect site. From left to right: regenerated bone at 3 months, 12 months and a view on top of the defect site. The grey colored areas illustrate the top and bottom cylinder caps.}
    \label{Cylinder3and12Months}
\end{figure}
\subsection{Numerical Implementations}
We use a simple first-order implicit in time Euler scheme to solve the equations displayed in the order displayed above. The fact that an implicit approach is feasible is due to the simple structure of the ODEs and the linearity of the diffusion equation. It is worth mentioning that this reduces the computational cost of solving the system drastically as only very few time steps are needed to achieve acceptable accuracy in the simulations. The elastic and the diffusion equation are discretized using P1 elements and the  meshes were generated using the Computational Geometry Algorithms Library CGAL \cite{boissonnat2000triangulations}. 

\subsection{Discussion of Numerical Simulations}

In Figure \ref{Fixateur3and12Months} the domain of computation with an added fixateur is shown. Here we assume the material of the fixateur to be titanium with Young's modulus chosen to 100GPa and a Poisson's ratio of 0.31. Bone growth and osteoblast production is disabled 
\begin{wrapfigure}{l}{0.5\linewidth}
    \includegraphics[width=0.95\linewidth]{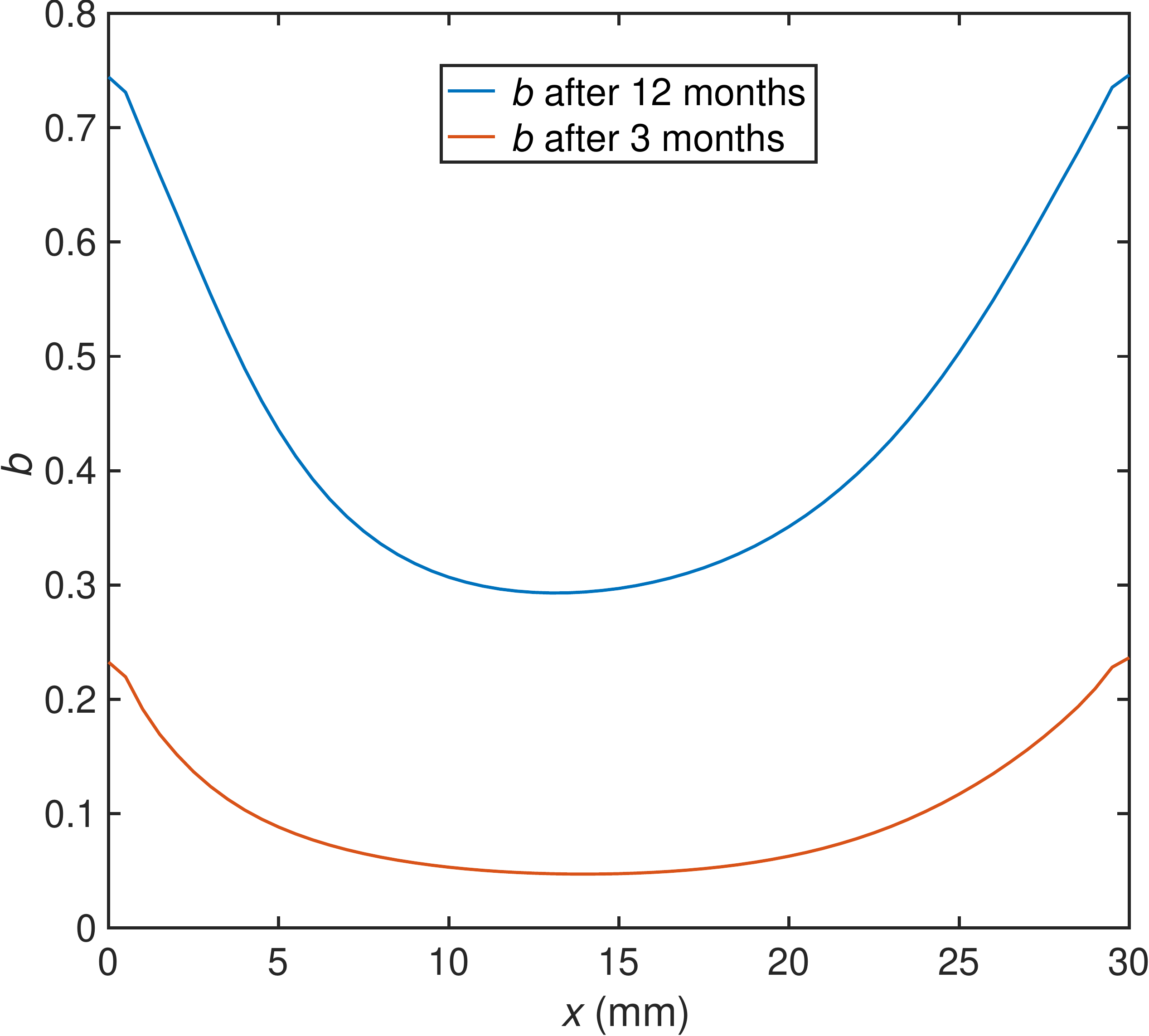}
    \caption{Relative bone density averaged over horizontal slices after $3$ and $12$ months in the experiment including the fixateur.}
    \label{RegeneratedBoneSlices}
\end{wrapfigure}
  in the space occupied by the fixateur. In Figure \ref{RegeneratedBoneSlices} we present the relative bone density averaged over horizontal slices in the fixateur experiment at $3$ and $12$ months. We observe that both the regenerated bone after $3$ and after $12$ months agree well with the experimental results shown in \cite[Figure 2, `Scaffold only']{cipitria2015bmp}. There, the same shape of regenerated bone, with a flat area in the middle of the defect site and a significant gradient towards the proximal and distal interface, is observed.

In Figure \ref{Fixateur3and12Months}, the result of the stress shielding effect of the fixateur is clearly visible, with little regenerated bone in the central part of the defect site close to the fixateur. Comparing to \cite[Figs 4C, 5C]{viateau07} or \cite[Figures 3a, 3b]{reichert11}  we see that this is also observed in  experiments. Bone mass loss due to stress shielding is indeed a long recognized, major issue in orthopaedic surgery \cite{Schwyzer1985, Terjesen09}.

The computation excluding the fixateur is performed using a reduced surface traction that is set to $70\%$ of the surface traction in the fixateur model to account for the stress shielding of the fixateur. This experiment is the direct analogon of the 1D model in \cite{poh2019optimization}. Naturally, we see that bone regenerates symmetrically and that the result is essentially a one dimensional distribution of bone comparable to the results in \cite{poh2019optimization}. Note that the asymmetries encountered in the more realistic model including the fixateur can not be resolved by a one-dimensional simplification. This has important implications for the porosity optimization of scaffolds where a three dimensional simulation can thus help to achieve a more appropriate optimal design.

\section*{Acknowledgements}
The  authors  gratefully  acknowledge   support  from   BMBF within the e:Med program in the SyMBoD consortium (grant number 01ZX1910C). Furthermore the authors thank Luca Courte (Freiburg) and Dorothee Knees (Kassel) for helpful suggestions and discussions.

\appendix

\section{Properties of Diffusion Equation}\label{sec:AppendixDiffusion}
This section provides the regularity results needed in the existence proof of Theorem \ref{ExistenceThm}. We begin by stating the regularity results in section \ref{app:regul}, then discuss the notion of Gr\"oger regular sets in section \ref{app:groeger} and conclude with the proofs in section \ref{subsectionProofs}.
\subsection{Regularity Results} \label{app:regul}
    Let $\Omega$ be a Lipschitz domain with a Dirichlet-Neumann partition of the boundary $\partial\Omega = \Gamma_N \cup \Gamma_D$. Both $\Gamma_D$ and $\Gamma_N$ are allowed to have vanishing measure. Let $D\in L^\infty(\Omega,\mathbb{R}^{n\times n})$ be uniformly elliptic, $k > 0$, $f_1, f_2\in L^2(I,L^2(\Omega))$ and $a_0\in L^\infty(\Omega)$, then we are interested in $L^2(I,C^0(\Omega))$, $L^\infty(I,L^\infty(\Omega))$ and $L^2(I,C^\alpha(\overline{\Omega}))$ regularity of $a_i\in H^1(I,H^1_D(\Omega),H^1_D(\Omega)')$, $i=1,2$ solving
    \begin{align}
        \int_I \langle d_t a_i, \, \cdot \, \rangle_{H^1_D(\Omega)}\,dt + \int_I\int_\Omega D\nabla a \nabla \,\cdot\, + ka\,\cdot\, dxdt &= \int_I\int_\Omega f_i\,\cdot\,dxdt \label{abstractDiffusionEquation}
        \\
        a_i(0) &= a_{0}.
    \end{align}
    We are also interested in the regularity of the difference $a_1 - a_2$. Note that $a_1 - a_2$ has better regularity properties as the initial value $a_1(0) - a_2(0) = 0$ is smooth.
    We will need varying assumptions in addition to the ones stated above, depending on the regularity we are after. Note that the main difficulty stems from the mixed boundary conditions as in this case the usual elliptic $H^2(\Omega)$ regularity fails, see for example \cite{savare1997regularity,kassmann2004regularity,grisvard2011elliptic}. Let us now state our main theorems.
    \begin{theorem}[$L^2(I,C^\alpha(\overline{\Omega}))$ Regularity]\label{L2CAlphaRegularity}
        Assume that $\Omega\subset\mathbb{R}^n$ with $n = 1, 2, 3$ is a Lipschitz domain, $\Gamma_N \cup \Gamma_D = \partial\Omega$ and assume that $\Omega\cup\Gamma_N$ is Gr\"oger regular, see \ref{GrogerReg}. Furthermore, let $f_i\in L^2(I,L^2(\Omega))$, $D\in L^\infty(\Omega,\mathcal{M}_s)$, $k > 0$ and $a_0\in L^\infty(\Omega)$. Then there is $\alpha > 0$ such that it holds $a_1 - a_2 \in L^2(I, C^\alpha(\overline{\Omega}))$ and 
        \begin{equation}\label{HolderEstimate}
            \norm{a_1 - a_2}_{L^2(I,C^\alpha(\overline{\Omega}))} \leq C \norm{f_1 - f_2}_{L^2(I,L^2(\Omega))},
        \end{equation}
        In addition, for every $\varepsilon > 0$ it holds $a_i \in L^2([\varepsilon, T], C^\alpha(\overline{\Omega}))$ and $a_i \in L^2(I, C^0(\overline{\Omega}))$.
    \end{theorem}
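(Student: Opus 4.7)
The plan is to reduce the problem to one with zero initial data and then combine maximal $L^2$-regularity in time with elliptic Hölder regularity in space on Gröger regular sets. Set $w := a_1 - a_2$; by linearity of \eqref{abstractDiffusionEquation}, $w \in H^1(I,H^1_D(\Omega),H^1_D(\Omega)')$ solves the same parabolic equation with forcing $f_1-f_2 \in L^2(I,L^2(\Omega))$ and homogeneous initial value $w(0) = 0$.

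First I would realize $\mathcal{A}u := -\operatorname{div}(D\nabla u) + ku$ as an unbounded self-adjoint strictly positive operator on $L^2(\Omega)$ whose form domain encodes the mixed boundary conditions. Since $-\mathcal{A}$ generates a bounded analytic semigroup on a Hilbert space, it enjoys maximal $L^2$-regularity by de Simon's theorem. Combined with $w(0)=0$ and the improved right-hand side $f_1-f_2\in L^2(I,L^2(\Omega))$ this upgrades the regularity to
\begin{equation*}
    d_tw \in L^2(I,L^2(\Omega)), \qquad \mathcal{A}w \in L^2(I,L^2(\Omega)),
\end{equation*}
with a quantitative bound in terms of $\|f_1-f_2\|_{L^2(I,L^2(\Omega))}$. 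Reading the PDE pointwise in $t$ yields $\mathcal{A}w(t) = f_1(t)-f_2(t) - d_tw(t) \in L^2(\Omega)$ for a.e.\ $t\in I$. At this point the Gröger regularity of $\Omega\cup\Gamma_N$ enters decisively: the elliptic Hölder regularity theorem of Haller-Dintelmann and Rehberg \cite{haller2009holder} provides $\alpha>0$ and a constant $C$ with $\|u\|_{C^\alpha(\overline{\Omega})} \le C\|\mathcal{A}u\|_{L^2(\Omega)}$ for every $u$ in the domain of $\mathcal{A}$ (this is where the dimensional assumption $n\le 3$ matters, guaranteeing that $L^2$ data is admissible). Squaring and integrating in $t$ gives the desired estimate \eqref{HolderEstimate}.

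For the auxiliary statements on the individual $a_i$, I would decompose $a_i = v + w_i$ with $v(t) := e^{-t\mathcal{A}}a_0$ the solution of the homogeneous problem and $w_i$ the inhomogeneous part with vanishing initial data. The previous argument applied to $w_i$ yields $w_i \in L^2(I,C^\alpha(\overline{\Omega}))$. For $v$, analyticity of the semigroup gives $\|\mathcal{A}v(t)\|_{L^2}\le Ct^{-1}\|a_0\|_{L^2}$, hence by the elliptic step, $\|v(t)\|_{C^\alpha}\le Ct^{-1}\|a_0\|_{L^2}$, which is square integrable away from $0$ and yields $v\in L^2([\varepsilon,T],C^\alpha(\overline{\Omega}))$ for every $\varepsilon>0$. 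For the global $L^2(I,C^0)$ statement I would invoke that the semigroup is submarkovian, so $\|v(t)\|_{L^\infty(\Omega)}\le\|a_0\|_{L^\infty}$ for a.e.\ $t$, while analyticity still provides $v(t)\in C^\alpha(\overline{\Omega})$ for every $t>0$; together this gives $\|v(t)\|_{C^0(\overline{\Omega})}\le\|a_0\|_{L^\infty}$ for a.e.\ $t$, hence $v\in L^\infty(I,C^0(\overline{\Omega}))\subset L^2(I,C^0(\overline{\Omega}))$.

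The main obstacle is the elliptic step: for mixed boundary value problems on merely Lipschitz (or Gröger-regular) domains the standard elliptic $H^2$-regularity fails, so one cannot bootstrap naively through Sobolev embedding. The Gröger regularity assumption on $\Omega\cup\Gamma_N$ is designed precisely for this situation and the Haller-Dintelmann-Rehberg improvement produces Hölder regularity directly, bypassing any detour through $W^{1,p}$ with $p>n$. Everything else -- maximal $L^2$-regularity, semigroup smoothing, and the submarkovian property -- is standard once this elliptic ingredient is in place.
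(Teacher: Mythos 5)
Your proof is correct and its backbone coincides with the paper's: reduce $w = a_1 - a_2$ to zero initial data, obtain maximal $L^2(\Omega)$ parabolic regularity from the self-adjointness of $\mathcal{A}$ on the Gelfand triple (the paper cites Arendt--ter Elst, you cite de Simon; for a self-adjoint operator on a Hilbert space these are the same fact), then feed $\mathcal{A}w(t) = f(t) - d_t w(t) \in L^2(\Omega)$ into the elliptic H\"older estimate of Haller-Dintelmann--Rehberg, which is exactly where Gr\"oger regularity and $n\le 3$ enter. The genuine divergence is in how the two arguments handle the homogeneous part carrying the $L^\infty$ initial datum. The paper splits $a_i = a_i^0 + a_i^1$, and for $a_i^1$ it exploits that $a_i^1 \in L^2(I,H^1_D(\Omega))$ to select $\varepsilon_n \downarrow 0$ with $a_i^1(\varepsilon_n)\in H^1_D(\Omega)$, after which the Addendum of the maximal-regularity theorem (initial data in the form domain) applies on $[\varepsilon_n,T]$; the $L^2(I,C^0(\overline{\Omega}))$ conclusion then rests on the Stampacchia truncation result (Theorem \ref{LinftiyIOmega}) for the $L^\infty(I\times\Omega)$ bound. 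You instead invoke the quantitative analytic-semigroup smoothing $\|\mathcal{A}e^{-t\mathcal{A}}a_0\|_{L^2}\le Ct^{-1}\|a_0\|_{L^2}$, which yields $\|v(t)\|_{C^\alpha(\overline{\Omega})}\le Ct^{-1}\|a_0\|_{L^2}$ with an explicit rate, and you replace the truncation argument by the submarkovian property of $e^{-t\mathcal{A}}$. The two are essentially equivalent in content (Stampacchia truncation is the usual way one verifies the Beurling--Deny criteria here), but your version is a bit sharper and more modular: it avoids the somewhat ad hoc selection of $\varepsilon_n$ and makes the $t^{-1}$ blow-up explicit, and it cleanly separates the boundedness ($L^\infty$) ingredient from the regularity ($C^\alpha$ for $t>0$) ingredient before combining them into $L^2(I,C^0(\overline{\Omega}))$. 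In short, same strategy, slightly cleaner execution of the homogeneous-semigroup step.
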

    \begin{theorem}\label{LinftiyIOmega} Assume that $f\in L^\infty(I\times\Omega)$, $a_0\in L^\infty(\Omega)$ and the assumptions of the beginning of the section. Then it holds that $a_i\in L^\infty(I\times \Omega)$.
    \end{theorem}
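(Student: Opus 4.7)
My plan is to apply Stampacchia's truncation method separately to obtain essential upper and lower bounds on each $a_i$. Since the equation has a zero-order term $+k a$ with $k > 0$ on the left and the right-hand side $f$ is bounded in $L^\infty(I\times\Omega)$, truncations at a suitable level $M$ are absorbed by this dissipation, preventing any growth above that level.

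For the upper bound I would set
\[
M := \max\bigl(\|a_0\|_{L^\infty(\Omega)},\, \|f_i\|_{L^\infty(I\times\Omega)}/k\bigr) \ge 0
\]
and use $\phi := (a_i - M)^+$ as a test function in the weak formulation \eqref{abstractDiffusionEquation}. First one verifies $\phi \in L^2(I, H^1_D(\Omega))$: the pointwise operation $s \mapsto (s - M)^+$ is Lipschitz and therefore preserves $H^1(\Omega)$, and because $a_i = 0$ on $\Gamma_D$ in the trace sense and $M \ge 0$, the trace of $\phi$ on $\Gamma_D$ vanishes as well. The Bochner--Sobolev chain rule, see for example \cite[Chapter~II, Section~5]{boyer2012mathematical}, then yields
\[
\int_0^t \langle d_s a_i, \phi \rangle_{H^1_D}\, ds = \tfrac12 \|\phi(t)\|_{L^2(\Omega)}^2 - \tfrac12 \|\phi(0)\|_{L^2(\Omega)}^2.
\]
Testing \eqref{abstractDiffusionEquation} against $\phi$, the gradient term $\int_\Omega D\nabla\phi\cdot\nabla\phi\,dx$ is non-negative by ellipticity, and the remaining contributions combine into
\[
\int_0^t \int_\Omega (f_i - k a_i)(a_i - M)^+\, dx\,ds.
\]
On the level set $\{a_i > M\}$ one has $k a_i > k M \ge \|f_i\|_{L^\infty} \ge f_i$, so this integrand is pointwise non-positive. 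Combined with $\phi(0) = (a_0 - M)^+ \equiv 0$, which holds by the choice of $M$, I obtain $\|\phi(t)\|_{L^2(\Omega)} \equiv 0$ on $I$, i.e.\ $a_i \le M$ almost everywhere on $I \times \Omega$. The lower bound is established by an entirely analogous argument, testing against $-(a_i + M)^-$ for the same $M$.

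The main technical obstacle I anticipate is the rigorous justification of the chain-rule identity above for the Lipschitz post-composition $s \mapsto \tfrac12 ((s-M)^+)^2$ applied to elements of $H^1(I, H^1_D, H^1_D')$. This is a well-known fact but deserves careful citation rather than a pointwise derivation. Once it is in place, the argument is entirely elementary and, pleasantly, it avoids the Gr\"oger-regularity machinery needed for Theorem~\ref{L2CAlphaRegularity}: nothing beyond Lipschitz regularity of $\partial\Omega$ and boundedness and ellipticity of $D$ is used.
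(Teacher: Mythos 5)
Your proposal is correct and follows essentially the same route as the paper: Stampacchia's truncation method, testing the weak formulation against $(a_i - M)^+$ and $(a_i + M)^-$, using non-negativity of the gradient term and a sign argument on the level set to conclude $\|(a_i - M)^+(t)\|_{L^2} \equiv 0$. If anything you are slightly more careful than the paper with the zero-order coefficient: your choice $M = \max(\|a_0\|_{L^\infty}, \|f_i\|_{L^\infty}/k)$ correctly tracks the factor $k$, whereas the paper's displayed inequality involving $(f - \bar{a})$ and the condition $\bar{a} \geq \max(\|f\|_{L^\infty}, \|a_0\|_{L^\infty})$ appear to have absorbed $k$ silently.
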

    The above regularity theorems apply to $\Tilde{a}_i$ of the main body of the article, where $\Tilde{a}_i$ is the part of the solution corresponding to homogeneous boundary conditions. Clearly all the results still hold true for $\Tilde{a}_i + 1$. To conclude we need positivity of $\Tilde{a}_i + 1$, therefore we consider a slightly different equation than \eqref{abstractDiffusionEquation}.
    \begin{theorem}[Positivity]\label{positivity}
		Assume a function $a\in H^1(I,H^1_D(\Omega),H^1_D(\Omega)')$ satisfies the following equality in the space $L^2(I,H^1_D(\Omega))'$
		\begin{align*}
			\int_I \langle d_ta,\,\cdot\, \rangle_{H^1_D(\Omega)}dt
			+
			\int_I\int_\Omega  D\nabla a\nabla\,\cdot\, 
			+
			\,k(a+1)\,\cdot\,dxdt
			=
			\int_I\int_\Omega f\,\cdot\, dxdt,
		\end{align*}
		and $a(0)+1=0$. Furthermore suppose that $f\in L^2(I,L^2(\Omega))$ is non-negative and the remaining assumptions stated at the beginning of the appendix hold true. Then $a + 1\geq 0$.
		\end{theorem}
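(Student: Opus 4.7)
The approach is the classical energy estimate for parabolic comparison: shift the unknown by the constant so that both initial data and Dirichlet trace of the shifted function become non-negative, then test the resulting equation with the negative part to force it to vanish.

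Set $w := a + 1$. Because $1$ is constant in space and time, $\nabla w = \nabla a$ and $d_t w = d_t a$ as distributions, while $w(0) = 0$ in $L^2(\Omega)$, and the equation rewrites, for every $\varphi \in L^2(I, H^1_D(\Omega))$, as
\[
\int_I \langle d_t w, \varphi\rangle_{H^1_D}\,dt + \iint_{I\times\Omega}\bigl(D\nabla w\cdot\nabla\varphi + kw\varphi\bigr)\,dx\,dt = \iint_{I\times\Omega} f\varphi\,dx\,dt.
\]
The goal is to show $w\ge 0$ almost everywhere. The candidate test function is $w^- := \max(-w,0)$. Since $a(t)\in H^1_D(\Omega)$, the Dirichlet trace of $w(t)$ equals $1\ge 0$ on $\Gamma_D$, so the trace of $w^-(t)$ vanishes there; together with the Stampacchia chain rule $\nabla w^- = -\mathbf{1}_{\{w<0\}}\nabla w$ this gives $w^- \in L^2(I, H^1_D(\Omega))$.

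The central analytic input is the Gelfand-triple chain rule (see \cite{boyer2012mathematical}, Chapter II, Section 5): for every $\tau\in I$,
\[
\int_0^\tau \langle d_t w(t), w^-(t)\rangle_{H^1_D}\,dt = -\tfrac12\|w^-(\tau)\|_{L^2(\Omega)}^2 + \tfrac12\|w^-(0)\|_{L^2(\Omega)}^2 = -\tfrac12\|w^-(\tau)\|_{L^2(\Omega)}^2,
\]
using $w^-(0)=0$. This identity is the main technical point, since $x\mapsto x^-$ is only Lipschitz rather than $C^1$; it is justified in standard fashion by regularizing $x\mapsto \tfrac12(x^-)^2$ by a family of smooth convex functions and passing to the limit.

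Testing the equation with $\varphi = \mathbf{1}_{[0,\tau]}w^-$, the remaining ingredients are straightforward. Because $\nabla w^+$ and $\nabla w^-$ have disjoint support, $\nabla w\cdot\nabla w^- = -|\nabla w^-|^2$; combined with the coercivity \eqref{Diff_Coerciv} of $D$ this yields $\int D\nabla w\cdot\nabla w^-\,dx \le -c_D\|\nabla w^-\|_{L^2}^2$. Analogously $ww^- = -|w^-|^2$, so $\int kww^-\,dx = -k\|w^-\|_{L^2}^2 \le 0$, and $\int fw^-\,dx \ge 0$ by the positivity hypothesis. Substituting the chain-rule identity and collecting signs,
\[
0 \le \int_0^\tau \int_\Omega f w^-\,dx\,dt \le -\tfrac12\|w^-(\tau)\|_{L^2}^2 - c_D\int_0^\tau \|\nabla w^-\|_{L^2}^2\,dt - k\int_0^\tau \|w^-\|_{L^2}^2\,dt \le 0,
\]
which forces $\|w^-(\tau)\|_{L^2} = 0$ for every $\tau\in I$. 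Hence $a + 1 = w \ge 0$ almost everywhere on $I\times\Omega$.
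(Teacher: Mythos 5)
Your proof is correct and takes essentially the same approach as the paper's: both test the weak formulation with the negative part $(a+1)^-$, invoke the Gelfand-triple chain rule to identify the time-derivative pairing with $-\tfrac12\|(a+1)^-(t)\|_{L^2}^2$, and use sign/ellipticity arguments for the diffusion and reaction terms to conclude $(a+1)^-\equiv 0$. Your write-up is slightly more explicit about two points the paper leaves implicit, namely the trace argument showing $(a+1)^-\in H^1_D(\Omega)$ and the regularization needed to justify the chain rule for the Lipschitz map $x\mapsto x^-$, but the substance is identical.
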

    \subsection{Boundary Regularity} \label{app:groeger}
    We say a bounded, open set $\Omega\subset\mathbb{R}^n$ is a Lipschitz domain if $\overline{\Omega}$ is a Lipschitz manifold with boundary, see \cite[Definition 1.2.1.2]{grisvard2011elliptic}.
    In the following we will denote the cube $[-1,1]^n\subset\mathbb{R}^n$ by $Q$. The following definition is due to Gr\"oger, see \cite{groger1989aw}.
	\begin{definition}[Gr\"oger Regular Sets]\label{GrogerReg}
	    Let $\Omega\subset\mathbb{R}^n$ be bounded and open and $\Gamma\subset\partial\Omega$ a relatively open set. We call $\Omega\cup\Gamma$ Gr\"oger regular, if for every $x\in\partial\Omega$ there are open sets $U,V\subset\mathbb{R}^n$ with $x\in U$, and a bijective, bi-Lipschitz map $\phi:U\to V$, such that $\phi(x) = 0$ and $\phi(U\cap (\Omega\cup\Gamma) )$ is either $\{ x\in Q\mid x_n <0 \}$, $\{ x\in Q\mid x_n\leq 0 \}$ or $\{ x\in Q\mid x_n<0 \}\cup \{ x\in Q\mid x_n = 0, \ x_{n-1} <0 \}$.
	\end{definition}
    It can easily be seen that a Gr\"oger regular set $\Omega$ (no matter the choice $\Gamma \subset \partial\Omega$) is a Lipschitz domain, see \cite[Theorem 5.1]{haller2009holder}. The next two theorems characterize Gr\"oger regular sets in two and three dimension. We cite the results from \cite{haller2009holder}.
	\begin{theorem}[Gr\"oger Regular Sets in 2D]
	    Let $\Omega \subset \mathbb{R}^2$ be a Lipschitz domain and $\Gamma\subset\partial\Omega$ be relatively open. Then $\Omega\cup\Gamma$ is Gr\"oger regular if and only if $\overline{\Gamma}\cap(\partial\Omega\setminus\Gamma)$ is finite and no connected component of $\partial\Omega\setminus\Gamma$ consists of a single point.
	\end{theorem}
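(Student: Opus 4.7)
The plan is to prove both directions of the equivalence by decoding what the three local model sets of Definition~\ref{GrogerReg} mean geometrically. Picture~1 at a point $x$ forces the boundary piece in the chart to be disjoint from $\Gamma$, so $x$ is an interior point of $\partial\Omega\setminus\Gamma$. Picture~2 forces the whole boundary piece into $\Gamma$, so $x$ lies in the relative interior of $\Gamma$. Picture~3 locally splits the boundary into an open Neumann arc $\{x_2=0,\,x_1<0\}$ and a closed Dirichlet arc $\{x_2=0,\,x_1\ge 0\}$ meeting only at the origin $\phi(x)$, so $x\in\overline{\Gamma}\cap(\partial\Omega\setminus\Gamma)$. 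The three chart types thus partition $\partial\Omega$ exactly into the relative interior of $\Gamma$, the relative interior of $\partial\Omega\setminus\Gamma$, and the set of transition points.

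For the direction Gröger implies the two conditions, the picture~3 chart around any transition point $x$ shows that $x$ is locally isolated among transition points, since the origin is the only transition point in the model picture. Compactness of $\partial\Omega$ then forces finiteness. An isolated component $\{y\}$ of $\partial\Omega\setminus\Gamma$ cannot be fit into any of the three pictures: picture~1 would give a relative Dirichlet arc through $y$, picture~2 would demand $y\in\Gamma$, and picture~3 again produces a Dirichlet arc through $y$.

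For the converse, points in the relative interior of $\Gamma$ or of $\partial\Omega\setminus\Gamma$ get picture~2 or picture~1 charts just by shrinking a standard bi-Lipschitz boundary chart of $\Omega$ to a relative neighborhood lying entirely in the appropriate set. The interesting case is a transition point $x$. Combine a bi-Lipschitz flattening of $\Omega$ to $\{x_2<0\}$ near $x$ with a bi-Lipschitz arc parametrization $\gamma:(-1,1)\to\partial\Omega$, $\gamma(0)=x$, and let $G\subset(-1,1)$ be the relatively open pullback of $\Gamma$; then $0\in\overline{G}\setminus G$. The finiteness hypothesis lets us shrink the interval so that $0$ is the only point of $\overline{G}\cap((-1,1)\setminus G)$. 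Any connected component of the closed set $C=(-1,1)\setminus G$ must then have its interior endpoints (if any) only at $0$, so $C$ is a single sub-interval containing $0$. The no-single-point hypothesis excludes $C=\{0\}$, $C=(-1,1)$ is ruled out by $0\in\overline{G}$, and any proper two-sided interval $[a,b]$ with $a,b\ne 0$ in the interior would create a second transition point. This leaves only $C=(-1,0]$ or $C=[0,1)$, which are exactly picture~3 up to a reflection in $x_1$ and a rescaling, both bi-Lipschitz.

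The main obstacle I expect is the transition case: one must simultaneously control the one-dimensional Neumann/Dirichlet partition of $\partial\Omega$ and the two-dimensional bi-Lipschitz flattening of $\Omega$, and fit them together into a single bi-Lipschitz chart of a neighborhood of $x$ in $\mathbb{R}^2$. The two hypotheses of the theorem are precisely what rules out every local one-dimensional pattern other than the one matching picture~3, so once the 1D classification is established the chart is obtained essentially for free by composing the flattening, the reflection, and a rescaling.
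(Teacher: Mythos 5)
The paper does not prove this theorem---it is stated and cited from Haller-Dintelmann, Kaiser, and Rehberg (\cite{haller2009holder}) without an argument, so there is no in-paper proof against which to compare your route. Taking your sketch on its own terms, it is correct and is, as far as I can tell, essentially the standard argument one would find in the cited source.

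Your decoding of the three model pictures is right and is the crux: picture~1 forces $\Gamma\cap U=\emptyset$ (because a bi-Lipschitz chart sends interior to interior, so $\phi(U\cap\Omega)$ must already fill $\{x_2<0\}\cap Q$), picture~2 forces $\Gamma\cap U=\partial\Omega\cap U$, and picture~3 forces $x$ to be the unique transition point in $U$. Since $\Gamma$ is relatively open, $\partial\Omega$ is indeed partitioned into $\Gamma$, the relative interior of $\partial\Omega\setminus\Gamma$, and $\overline{\Gamma}\cap(\partial\Omega\setminus\Gamma)$, matching the three charts. For the necessary direction, the isolated-transition-point observation plus compactness of $\partial\Omega$ (and the fact that $\overline{\Gamma}\cap(\partial\Omega\setminus\Gamma)$ is closed) gives finiteness, and the case analysis ruling out a singleton Dirichlet component is correct. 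For the sufficient direction, the one subtlety you correctly flag as ``the main obstacle'' is that the 1D arc parametrization and the 2D flattening must be the same chart: one should take $\gamma$ to be the restriction to $\{x_2=0\}$ of the inverse flattening map, so that the 1D classification of $C=(-1,1)\setminus G$ literally \emph{is} the description of $\phi(U\cap\Gamma)$ on the flattened boundary. With that convention your interval argument (after shrinking so that $0$ is the only transition parameter, $C$ is a closed interval containing $0$; the hypotheses then force $C=(-1,0]$ or $C=[0,1)$) is complete, and composing with a reflection and a rescaling produces the required picture~3 chart. One small point worth writing out if you expand the sketch: to rule out $C=\{0\}$ you need to know that $\gamma(0)=x$ is actually a singleton \emph{component} of $\partial\Omega\setminus\Gamma$ and not merely isolated inside the chart; this does follow because the punctured image $\gamma((-1,0)\cup(0,1))$ is a relatively open neighborhood of $x$ in $\partial\Omega$ entirely inside $\Gamma$, so the global component of $x$ is indeed $\{x\}$.
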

	\begin{theorem}[Gr\"oger Regular Sets in 3D]
	    Let $\Omega \subset \mathbb{R}^3$ be a Lipschitz domain and $\Gamma\subset\partial\Omega$ be relatively open. Then $\Omega\cup\Gamma$ is Gr\"oger regular if and only if the following two conditions hold
	    \begin{itemize}
	        \item [(i)] $\partial\Omega\setminus\Gamma$ is the closure of its interior.
	        \item [(ii)] For any $x\in \overline{\Gamma}\cap(\partial\Omega\setminus\Gamma)$ there is an open neighborhood $U_x$ of $x$ and a bi-Lipschitz map $\phi:U_x\cap\overline{\Gamma}\cap(\partial\Omega\setminus\Gamma)\to (-1,1)$.
	    \end{itemize}
	\end{theorem}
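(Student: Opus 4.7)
The plan is to prove the biconditional by establishing the two directions separately. For the forward implication, I would push the local model charts provided by Gröger regularity through to verify conditions (i) and (ii). For the backward implication, I would construct a Gröger chart at each boundary point by cases, depending on whether the point lies in the relative interior of $\Gamma$, in the relative interior of $\partial\Omega \setminus \Gamma$, or on the interface $\overline{\Gamma} \cap (\partial\Omega \setminus \Gamma)$, using (i) and (ii) exactly where they are needed.

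For the direction ``Gröger regular $\Rightarrow$ (i) and (ii)'', fix $x \in \partial\Omega$ and the bi-Lipschitz chart $\phi \colon U \to V$ mapping $\phi(U \cap (\Omega \cup \Gamma))$ onto one of the three model sets of Definition \ref{GrogerReg}. In each of the three models the portion of the boundary lying in $\partial\Omega \setminus \Gamma$ is, respectively, the whole local boundary $\{x_n = 0\} \cap Q$, the empty set, or the closed half-plane $\{x_n = 0,\ x_{n-1} \geq 0\} \cap Q$; in all three cases it equals the closure of its relative interior in the local boundary. Since bi-Lipschitz maps preserve ``closure of relative interior'' as a topological notion, (i) follows by covering $\partial\Omega$ with such charts. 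For (ii), the interface $\overline{\Gamma} \cap (\partial\Omega \setminus \Gamma)$ can only meet $U$ when the third model applies, and there its preimage under $\phi$ is exactly the edge $\{x_n = x_{n-1} = 0\} \cap Q$. The restriction $\phi^{-1}$ to this edge is a bi-Lipschitz parametrization by an interval, which, after a final affine rescaling, gives the required bi-Lipschitz map onto $(-1,1)$.

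For the direction ``(i) and (ii) $\Rightarrow$ Gröger regular'', pick $x \in \partial\Omega$. If $x$ lies in the relative interior of $\Gamma$, the Lipschitz-domain chart flattening $\partial\Omega$ at $x$ automatically puts $\phi(U \cap (\Omega \cup \Gamma))$ into the second model form. Symmetrically, if $x$ lies in the relative interior of $\partial\Omega \setminus \Gamma$, which by (i) is dense in $\partial\Omega \setminus \Gamma$ precisely where it matters, we obtain the first model. The remaining case is $x \in \overline{\Gamma} \cap (\partial\Omega \setminus \Gamma)$. Use the Lipschitz-manifold structure of $\partial\Omega$ to flatten the boundary near $x$ to a piece of $\{x_n = 0\}$; within this flattened picture, the image of the interface is the image of the bi-Lipschitz curve from (ii) and therefore separates the flattened boundary into two components, each corresponding (by (i)) to a Lipschitz region. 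A further in-plane bi-Lipschitz adjustment straightens this interface curve to the line $\{x_{n-1} = 0\}$ and places $\Gamma$ and $\partial\Omega \setminus \Gamma$ on opposite sides, producing the third model.

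The main obstacle I expect is the construction in the interface case of the backward direction. The difficulty is not the existence of each individual auxiliary map, but the requirement that their combination be a single bi-Lipschitz map on a three-dimensional neighborhood, with uniform two-sided Lipschitz bounds up to the interface line. One needs (ii) to provide a Lipschitz straightening of the interface curve, together with (i) to ensure that $\partial\Omega \setminus \Gamma$ is fat enough so that the straightened boundary splits genuinely into two Lipschitz half-planes rather than degenerating near the interface. Checking that the resulting gluing, followed by a transversal extension into $\Omega$ via the domain's Lipschitz chart, preserves the bi-Lipschitz property is the technical core, and is essentially the content of the corresponding proof in \cite{haller2009holder}.
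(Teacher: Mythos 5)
The paper does not prove this theorem at all: it states it and cites it directly from \cite{haller2009holder} (see the sentence just before the statement, ``We cite the results from \cite{haller2009holder}''), so there is no in-paper proof to compare your argument against. Your sketch of the two directions is a reasonable outline of what such a proof would have to do, and you correctly identify the forward direction as a straightforward pushing-forward of the three model configurations. You also correctly identify the genuinely hard step in the converse — constructing a single three-dimensional bi-Lipschitz chart at a point of the interface $\overline{\Gamma}\cap(\partial\Omega\setminus\Gamma)$ by combining a Lipschitz flattening of $\partial\Omega$, the one-dimensional bi-Lipschitz straightening from (ii), and the fatness guaranteed by (i) — but you then explicitly defer that step to \cite{haller2009holder}. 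That is honest, and consistent with what the paper does, but it means the proposal is a roadmap rather than a proof: the interface case is exactly where all the technical content of the theorem lives (proving that the in-plane straightening of the interface curve extends to a bi-Lipschitz map of a full neighborhood with uniform constants, and that (i) really does rule out cusping of $\partial\Omega\setminus\Gamma$ toward the interface), and none of that is carried out here. Two small inaccuracies worth flagging: in the converse, the case $x$ in the relative interior of $\partial\Omega\setminus\Gamma$ does not use (i) at all — it is handled by the Lipschitz-domain chart alone, just like the relative-interior-of-$\Gamma$ case — whereas (i) is only invoked at interface points; and in the forward direction one must shrink the chart neighborhood $U$ before asserting that the interface meets $U$ only in the third model configuration, since a priori $U$ may contain distant interface points.
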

    \subsection{Proofs of the Regularity Results}\label{subsectionProofs}
    We will begin with the $L^2(I,C^\alpha(\overline{\Omega}))$ results, i.e., the proof of theorem \ref{L2CAlphaRegularity}. To this end we need two results from the literature, one, \cite[Theorem 3.3]{haller2009holder} on mixed elliptic boundary value problems that will yield the $C^\alpha(\overline{\Omega})$ information as well as a maximal $L^2(\Omega)$ regularity result from \cite{arendt2017jl} to transfer this to the solution of the associated parabolic equation.
    \begin{theorem}[H\"older Regularity]\label{theoremHolderReg}
        Let $\Omega\subset \mathbb{R}^n$ for $n = 2,3,4$ be open, bounded and connected and assume that $\Omega\cup\Gamma_N$ is Gr\"oger regular. Assume further, that $D\in L^\infty(\Omega,\mathbb{R}^{n\times n})$ is uniformly elliptic, $k > 0$ and let $q > n$ and denote by $q'$ its adjoint exponent, i.e., $1/q + 1/q' = 1$. Define the operator
        \begin{equation*}
            \mathcal{M}:H^1_D(\Omega) \to H^1_D(\Omega)' \quad\text{with}\quad \mathcal{M}v = \int_\Omega D\nabla v\nabla\,\cdot\, + k v\,\cdot\,dx.
        \end{equation*}
        Then there is $\alpha > 0$ such that 
        \begin{equation*}
            \mathcal{M}^{-1}: W^{1,q'}_D(\Omega)' \to C^\alpha(\overline{\Omega})
        \end{equation*}
        is well defined and continuous.
        \begin{proof}
        See \cite[Theorem 3.3]{haller2009holder}.
        \end{proof}
    \end{theorem}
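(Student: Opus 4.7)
The plan is to reduce H\"older regularity to an integrability improvement for $\nabla u$ and Morrey's embedding, with the integrability improvement being the main technical obstacle.

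First, I would establish that $\mathcal{M}$ is an isomorphism from $H^1_D(\Omega)$ onto $H^1_D(\Omega)'$ via the Lax-Milgram lemma. Continuity of the bilinear form is immediate from $D \in L^\infty$, and coercivity follows from uniform ellipticity together with $k>0$, the latter covering the case $\Gamma_D=\emptyset$ where a Poincar\'e inequality is unavailable; if $\Gamma_D$ has positive surface measure, Poincar\'e combined with ellipticity gives coercivity directly. Since $q>n\geq 2$ implies $q'\leq 2$ and hence the continuous embedding $H^1_D(\Omega)\embeds W^{1,q'}_D(\Omega)$, dualization yields $W^{1,q'}_D(\Omega)'\embeds H^1_D(\Omega)'$, so $\mathcal{M}^{-1}$ is defined on the required right-hand sides and produces at least an $H^1_D$ solution.

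The heart of the proof is upgrading this to an isomorphism $\mathcal{M}\colon W^{1,p}_D(\Omega)\to W^{1,p'}_D(\Omega)'$ for some $p>n$. I would proceed via a Meyers-Gehring reverse H\"older estimate for the gradient, obtained by a local-to-global argument. Cover $\overline{\Omega}$ by finitely many balls falling into four categories: interior balls, balls meeting only the Dirichlet part, balls meeting only the Neumann part, and balls meeting the interface $\overline{\Gamma_D}\cap\overline{\Gamma_N}$. In each case one proves a Caccioppoli-type estimate and deduces a reverse H\"older inequality by combining with Sobolev-Poincar\'e. Interior estimates are classical; on a Lipschitz chart near the pure Neumann or pure Dirichlet boundary one uses even reflection (for Neumann) or zero extension (for Dirichlet) to reduce to an interior-type situation. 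The essential step is on the interface, which is exactly where the Gr\"oger regularity hypothesis enters: the bi-Lipschitz chart from Definition \ref{GrogerReg} simultaneously flattens the boundary and straightens the Dirichlet-Neumann interface into one of the three model configurations on the cube $Q$, on which reflection and Gehring's lemma apply uniformly. Patching the local estimates and invoking Gehring's self-improving lemma globally yields $\nabla u\in L^{2+\varepsilon}_{\mathrm{loc}}$.

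The dimensional restriction $n\leq 4$ guarantees that this integrability gain can be iterated (or bootstrapped against the right-hand side regularity) until the exponent exceeds $n$. Once $p>n$ is attained, Morrey's embedding $W^{1,p}(\Omega)\embeds C^\alpha(\overline{\Omega})$ with $\alpha=1-n/p$ delivers the H\"older regularity of $u$. Composing the $W^{1,p'}_D(\Omega)'\to W^{1,p}_D(\Omega)$ isomorphism with this embedding, together with the continuous inclusion $W^{1,q'}_D(\Omega)'\embeds W^{1,p'}_D(\Omega)'$ (provided one arranges $p\leq q$), proves continuity of $\mathcal{M}^{-1}\colon W^{1,q'}_D(\Omega)'\to C^\alpha(\overline{\Omega})$. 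The hardest step is unambiguously the reverse H\"older estimate at the Dirichlet-Neumann interface: without Gr\"oger regularity the Meyers-Gehring program can fail, and even with it the resulting exponent $\alpha$ is nonconstructive and typically very small, which is why the statement only claims existence of some $\alpha>0$.
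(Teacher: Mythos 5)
The paper offers no proof beyond the citation to Haller-Dintelmann, Meyer, Rehberg and Schiela, so your sketch should be measured against what that reference actually does. Your first two paragraphs are essentially sound: Lax--Milgram handles well-posedness in $H^1_D(\Omega)$ (with $k>0$ supplying coercivity when $\Gamma_D$ is empty), and the Meyers--Gehring reverse H\"older argument, localized by a finite covering whose interface charts are exactly the bi-Lipschitz maps of the Gr\"oger condition, does yield a $W^{1,2+\varepsilon}_D$ isomorphism theory for some small $\varepsilon>0$. That is Gr\"oger's 1989 theorem and it is indeed the analytic backbone of the cited result.

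The gap lies in your final step. The exponent $2+\varepsilon$ produced by Gehring's lemma is a fixed, nonconstructive number controlled by the ellipticity ratio and the geometry; it does not iterate, because the reverse H\"older inequality for $\nabla u$ comes from Caccioppoli plus Sobolev--Poincar\'e and is not improved by knowing higher integrability of $\nabla u$ a priori, and there is no extra integrability to ``bootstrap against'' when the datum lives only in the distribution space $W^{1,q'}_D(\Omega)'$. In fact, for merely measurable uniformly elliptic $D$ the optimal gradient integrability can be arbitrarily close to $2$, so for $n=3$ or $n=4$ the route $W^{1,p}_D(\Omega)\hookrightarrow C^\alpha(\overline\Omega)$ with $p>n$ via Morrey is not available. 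The cited theorem therefore takes a different path: having the $W^{1,2+\varepsilon}_D$ theory, it runs De Giorgi--Nash--Moser technology --- a Moser iteration for local boundedness and an oscillation-decay (Campanato) estimate --- and propagates these to the Neumann part and to the Dirichlet--Neumann interface by reflection in the Gr\"oger charts. The H\"older exponent $\alpha$ comes out of the oscillation decay, not out of Morrey's embedding, and the restriction $n\le 4$ reflects the Sobolev bookkeeping needed to start the Moser iteration from a right-hand side in $W^{1,q'}_D(\Omega)'$, $q>n$, rather than any iterated gain of gradient integrability. Your outline collapses without this replacement of Morrey by an oscillation estimate.
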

    In particular, in dimensions $n = 2, 3$ we can choose $q = 4$ and by the Sobolev embedding theorems, see \cite{grisvard2011elliptic}, it holds that $L^2(\Omega)\hookrightarrow W^{1,q'}_D(\Omega)'$ and hence for $f\in L^2(\Omega)$ a solution $v\in H^1_D(\Omega)$ to 
    \[
        \int_\Omega D\nabla v\nabla\,\cdot\, + kv\,\cdot\,dx = \int_\Omega f\,\cdot\,dx\quad\text{in }H^1_D(\Omega)'
    \]
    lies in $C^\alpha(\overline{\Omega})$ and satisfies the estimate
    \[
        \norm{v}_{C^\alpha(\overline{\Omega})} \leq C \norm{f}_{L^2(\Omega)}.
    \]
    In order to amplify this elliptic regularity result to the parabolic setting, we will use an $L^2(\Omega)$ maximal regularity result. In our particular case this is a statement of the form: $f\in L^2(I,L^2(\Omega))$ implies that the solution $a\in H^1(I,H^1_D(\Omega),H^1_D(\Omega)')$ of 
    \[ 
        d_ta + \mathcal{M}a = f \quad \text{and} \quad a(0) = a_0
    \]
    is of regularity $H^1(I,L^2(\Omega))$. Consequently $f$, $d_ta$ and $\mathcal{M}a$ are members of the space $L^2(I,L^2(\Omega))$, hence the term maximal regularity. Depending on the operator $\mathcal{M}$, problems of this form can be delicate, we refer to \cite{lions2013equations} for certain results and to \cite{arendt2017jl} for a recent discussion. In our case $\mathcal{M}$ is self-adjoint and does not depend on time and therefore $L^2(\Omega)$ maximal regularity holds 
    \begin{theorem}[Maximal $L^2(\Omega)$ Regularity]\label{MaxReg} Let $\Omega \subset \mathbb{R}^n$ be a Lipschitz domain, $\partial\Omega = \Gamma_N\cup\Gamma_D$, $D\in L^\infty(\Omega,\mathcal{M}_s)$ uniformly elliptic and symmetric and $k > 0$. Define
        \begin{equation*}
            \mathcal{M}:H^1_D(\Omega) \to H^1_D(\Omega)' \quad\text{with}\quad Mv = \int_\Omega D\nabla v\nabla\,\cdot\, + k v\,\cdot\,dx.
        \end{equation*}
        and also its part in $L^2(\Omega)$ which we will denote by $M$, i.e., 
        \begin{equation*}
            \operatorname{dom}(M)\coloneqq \{ v\in H^1_D(\Omega) \mid \mathcal{M}v \in L^2(\Omega) \} \quad \text{with}\quad Mv = \mathcal{M}v
        \end{equation*}
        Then set
        \begin{equation*}
            \mathcal{X} \coloneqq H^1_0(I,L^2(\Omega))\cap L^2(I,\operatorname{dom}(M)) 
        \end{equation*}
        endowed with the norm
        \begin{equation*}
            \norm{a}_\mathcal{X}\coloneqq\norm{d_ta}_{L^2(I,L^2(\Omega))} + \norm{Ma(\cdot)}_{L^2(I,L^2(\Omega))}. 
        \end{equation*}
        Here $H^1_0(I,L^2(\Omega))$ shall denote the functions in $H^1(I,L^2(\Omega))$ with vanishing initial value. Then the space $\mathcal{X}$ is a Hilbert space and the map
        \begin{equation*}
            d_t+ M:\mathcal{X}\to L^2(I,L^2(\Omega))\quad\text{with}\quad a\mapsto d_ta + Ma
        \end{equation*}
        is a linear homeomorphism.
        \paragraph{\textbf{Addendum.}} Furthermore, if we consider the problem as above but arbitrary initial value in $H^1_D(\Omega)$, i.e.,
        \begin{equation*}
            d_ta + Ma = f \quad \text{and}\quad a(0) = a_0\in H^1_D(\Omega), 
        \end{equation*}
        the solution $a$ lies in $H^1(I,L^2(\Omega))$. 
        \begin{proof}
            This is discussed in \cite[Section 4]{arendt2017jl}. The Addendum requires $a_0$ to lie in trace space (the space of initial values) of $H^1(I,L^2(\Omega))\cap L^2(I,\operatorname{dom}(M))$. As we assume $D$ to be symmetric we have that $\mathcal{M}$ is self-adjoint and hence $H^1_D(\Omega) = \operatorname{dom}(M^{1/2})$ which coincides with the trace space, see \cite{arendt2017jl}, sections 2 and 4.
        \end{proof}
    \end{theorem}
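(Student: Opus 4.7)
The plan is to realize $\mathcal{M}$ as a form-associated self-adjoint positive operator on $L^2(\Omega)$ and then invoke classical Hilbert-space maximal $L^2$ regularity for such operators.

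First I would check that the bilinear form $\mathfrak{a}(u,v) = \int_\Omega D\nabla u\cdot\nabla v + kuv\,dx$ on $H^1_D(\Omega)$ is symmetric (by symmetry of $D$), continuous (by $D\in L^\infty$), and coercive (by uniform ellipticity together with $k>0$, with the help of a Poincar\'e inequality when $\Gamma_D$ has positive measure). The Kato--Lions form construction then produces a self-adjoint, strictly positive operator $M$ on $L^2(\Omega)$ whose form domain is $H^1_D(\Omega)$ and whose operator domain coincides with $\{v\in H^1_D(\Omega)\mid \mathcal{M}v\in L^2(\Omega)\}$, matching the statement. Completeness of $\mathcal{X}$ as a Hilbert space follows by the standard argument for intersections of Hilbert spaces, using that $\operatorname{dom}(M)$ is a Hilbert space when equipped with the graph norm of $M$.

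Next I would apply the theorem of de Simon: every positive self-adjoint operator on a Hilbert space has maximal $L^2$ regularity. Using the spectral resolution $M = \int_0^\infty \lambda\,dE_\lambda$ and the mild-solution representation $a(t)=\int_0^t e^{-(t-s)M}f(s)\,ds$, one reduces, via the spectral calculus and Plancherel's theorem in the time variable, to a uniform-in-$\lambda$ bound on the Fourier multiplier associated with $(i\tau+\lambda)^{-1}\lambda$, which is immediate. This yields $\|Ma\|_{L^2(I,L^2(\Omega))}\lesssim \|f\|_{L^2(I,L^2(\Omega))}$ and $\|d_t a\|_{L^2(I,L^2(\Omega))}\lesssim \|f\|_{L^2(I,L^2(\Omega))}$. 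Uniqueness in $\mathcal{X}$ with vanishing initial datum follows from the energy identity $\tfrac{1}{2}\partial_t\|a\|_{L^2}^2 + \mathfrak{a}(a,a)= \langle f,a\rangle$ with $f=0$. Hence $d_t+M:\mathcal{X}\to L^2(I,L^2(\Omega))$ is a continuous linear bijection; the open mapping theorem upgrades this to a linear homeomorphism.

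For the addendum I would split $a(t) = e^{-tM}a_0 + w(t)$, where $w$ solves the homogeneous-initial problem with right-hand side $f$ and hence lies in $\mathcal{X}$ by the first part. The semigroup part is estimated by the spectral theorem: $\|d_t e^{-tM}a_0\|_{L^2(I,L^2)}^2 = \int_0^\infty \!\int_I \lambda^2 e^{-2t\lambda}\,dt\,d\|E_\lambda a_0\|^2 \lesssim \|M^{1/2}a_0\|_{L^2}^2$, which is finite exactly when $a_0\in \operatorname{dom}(M^{1/2})$. Because $\mathfrak{a}$ is a symmetric closed form, the Kato square root identity gives $\operatorname{dom}(M^{1/2})=H^1_D(\Omega)$, so any $a_0\in H^1_D(\Omega)$ is admissible and yields $a\in H^1(I,L^2(\Omega))$.

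The main obstacle will be verifying the form-to-operator correspondence carefully when the boundary is only Lipschitz with a mixed Dirichlet--Neumann split. The identification $\operatorname{dom}(M^{1/2})=H^1_D(\Omega)$ used in the addendum is essentially free here because $\mathfrak{a}$ is \emph{symmetric} and closed, so one does not need the harder (non-symmetric) Kato square root theorem; this symmetry is the key structural fact that makes the whole argument work at this level of generality and is why \cite{arendt2017jl}, specialized to the symmetric case, applies directly.
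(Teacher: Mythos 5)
Your proof is correct and follows essentially the same route as the paper, which simply defers to \cite{arendt2017jl}: form-associated self-adjoint operator, Hilbert-space maximal $L^2$ regularity (de Simon), and the identification $\operatorname{dom}(M^{1/2})=H^1_D(\Omega)$ via the symmetric (hence easy) Kato square root identity to handle the initial datum. You have merely unpacked the citation into a self-contained argument; one minor remark is that Poincar\'e is never needed for coercivity here since the $k>0$ zeroth-order term already makes the form coercive on $H^1_D(\Omega)$ even when $\Gamma_D=\emptyset$, which matters because the theorem explicitly allows that case.
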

    \begin{proof}[Proof of Theorem \ref{L2CAlphaRegularity}]
        We will first show that $a_1 - a_2 \in L^2(I,C^\alpha(\overline{\Omega}))$ and that the estimate \eqref{HolderEstimate} is satisfied. Note that $a\coloneqq a_1 - a_2$ satisfies an equation of type \eqref{abstractDiffusionEquation} with right hand side $f\coloneqq f_1 - f_2\in L^2(I,L^2(\Omega))$ and zero initial condition, namely
        \begin{equation*}
            d_ta + \mathcal{M}a = f \quad\text{in }L^2(I,H^1_D(\Omega))' \quad\text{and } a(0) = 0.
        \end{equation*}
        by Theorem \ref{MaxReg} $a$ lies in the space $\mathcal{X}$ and satisfies almost everywhere in $I$ the equation $d_ta(t) + Ma(t) = f(t)$. Using Theorem \ref{theoremHolderReg} we can estimate
        \begin{equation}\label{HolderAPrioriEsitmate}
            \norm{a(t)}_{C^\alpha(\overline{\Omega})} \leq C \norm{ f(t) - d_ta(t)}_{L^2(\Omega)}.
        \end{equation}
        It follows
        \begin{equation*}
            \norm{a}_{L^2(I,C^\alpha(\overline{\Omega}))} \leq C (\norm{f}_{L^2(I,L^2(\Omega))} + \norm{d_t}_{\mathcal{L}(\mathcal{X}, L^2(I,L^2(\Omega)))}\norm{a}_\mathcal{X} )
        \end{equation*}
        and using the continuity of $(d_t + M)^{-1}$ established in Theorem \ref{MaxReg} we get the estimate $\norm{a}_\mathcal{X}\leq C\norm{f}_{L^2(I,L^2(\Omega))}$ which yields combined with the previous estimates
        \begin{equation*}
            \norm{a_1 - a_2}_{L^2(I,C^\alpha(\overline{\Omega}))} \leq C \norm{f}_{L^2(I,L^2(\Omega))}.
        \end{equation*}
        Now we show that $a_i$ is a member of $L^2(I,C^0(\overline{\Omega}))$. We decompose $a_i$ into $a_i = a_i^0 + a_i^1$ where $a_i^0$ solves
        \begin{equation*}
            d_ta_i^0 + \mathcal{M}a_i^0 = f_i \quad \text{and} \quad a_i^0(0) = 0
        \end{equation*}
        and $a_i^1$ solves
        \begin{equation*}
            d_ta_i^1 + \mathcal{M}a_i^1 = 0 \quad \text{and} \quad a_i(0) = a_0.
        \end{equation*}
        By repeating our previous computations it is clear that $a_i^0\in L^2(I,C^\alpha(\overline{\Omega}))$. To conclude we will prove that for every $\varepsilon > 0$ it holds
        \begin{equation*}
            a_i^1 \in L^2([\varepsilon,T],C^\alpha(\overline{\Omega}))\cap L^\infty(I\times\Omega).
        \end{equation*}
        Let us begin with $a_i^1\in L^2([\varepsilon,t],C^\alpha(\overline{\Omega}))$. As $a_i^1\in C^0(I,L^2(\Omega))\cap L^2(I,H^1_D(\Omega))$ point evaluations are well defined and there is a sequence $(\varepsilon_n)$ of real numbers with $\varepsilon_n\to 0$ such that $a_i^1(\varepsilon_n)\in H^1_D(\Omega)$ for all $n\in\mathbb{N}$. For given $\varepsilon > 0$ choose $\varepsilon_n$ such that $\varepsilon_n \leq \varepsilon$. Then apply the Addendum of Theorem \ref{MaxReg} to obtain
        \begin{equation*}
            a_i^1\in H^1([\varepsilon_n,T],L^2(\Omega))\cap L^2([\varepsilon_n,T], \operatorname{dom}(M)).
        \end{equation*}
        Repeating the computations in the beginning of this proof we arrive at \eqref{HolderAPrioriEsitmate} with $a_i^1$ instead of $a$. It is then clear that $a_i^1\in L^2([\varepsilon,T],C^\alpha(\overline{\Omega}))$. The $L^\infty(I\times\Omega)$ part of the theorem holds more generally for right hand sides in $L^\infty(I\times\Omega)$ and is addressed in the following proof of Theorem \ref{LinftiyIOmega}.
    \end{proof}
    \begin{proof}[Proof of Theorem \ref{LinftiyIOmega}]
        We will use Stampacchias truncation method \cite{stampacchia1958contributi}, that is for a real number $\Bar{a}$ we will test the PDE with 
        \begin{equation*}
            (a_i - \Bar{a})^+\coloneqq \max(0, a_i - \Bar{a}) \quad\text{and} \quad (a_i + \Bar{a})^-\coloneqq -\min(0, a_i + \Bar{a}).
        \end{equation*}
        One can show that $(a_i - \Bar{a})^+$ and $(a_i + \Bar{a})^-$ are members of $H^1(I,H^1_D(\Omega),H^1_D(\Omega)')$ if $a_i$ is itself in that space and that it holds
        \begin{equation*}
            \int_0^t\langle d_ta_i, (a_i - \Bar{a})^+ \rangle_{H^1_D(\Omega)}\,dt = \frac12 \norm{ (a_i - \Bar{a})^+(t) }^2_{L^2(\Omega)} - \frac{1}{2}\norm{(a_0 - \Bar{a})^+}^2_{L^2(\Omega)}
        \end{equation*}
        and
        \begin{equation*}
            \int_\Omega D\nabla a_i(t)\nabla (a_i - \Bar{a})^+(t)\,dx = \int_\Omega D\nabla (a_i - \Bar{a})^+(t)\nabla (a_i - \Bar{a})^+(t)\,dx    
        \end{equation*}
        such as
        \begin{equation*}
            \int_\Omega a_i(t)(a_i - \Bar{a})^+(t)\,dx = \norm{(a_i - \Bar{a})^+(t)}^2_{L^2(\Omega)} + \int_\Omega \Bar{a}(a_i - \Bar{a})^+(t)\,dx
        \end{equation*}
        Hence it follows for every $t\in I$ and $\Bar{a} \geq \max(\norm{f}_{L^\infty(I\times\Omega)}, \norm{a_0}_{L^\infty(\Omega)} )$
        \begin{align*}
            \frac12 \norm{(a_i - \Bar{a})^+(t)}^2_{L^2(\Omega)} &\leq \int_0^t\int_\Omega (f - \Bar{a})(a_i - \Bar{a})^+dxds + \frac12\norm{(a_0 - \Bar{a})^+}^2_{L^2(\Omega)}
            \\&\leq 0.
        \end{align*}
        This implies that $a_i\leq \Bar{a}$ almost everywhere in $I\times\Omega$. Similarly one establishes $\Bar{a}\leq a_i$ using the test function $(a_i + \Bar{a})^-$.
    \end{proof}
    
		\begin{proof}[Proof of Theorem \ref{positivity}]
			One can check that $(a+1)^- = -\min(0, a + 1)$ is a member of $H^1(I,H^1_D(\Omega),H^1_D(\Omega)')$ and that it holds for all $t\in I$
			\begin{align*}
				\int_0^t \langle d_ta,(a+1)^- \rangle_{H^1_D}ds
				=
				\frac12\Big(\norm{(a+1)^-(0)}^2_{L^2(\Omega)} 
				-
				\norm{(a+1)^-(t)}^2_{L^2(\Omega)}
				\Big)
			\end{align*}
			and
			\begin{align*}
				&\int_0^t \int_\Omega \langle D\nabla a,\nabla (a+1)^{-} \rangle
				+
				k(a+1)(a+1)^-dxds
				\\
				=
				&-\int_0^t\int_\Omega\langle D\nabla\big[(a+1)^-\big],\nabla\big[(a+1)^-\big]\rangle
				+
				k\big[
				(a+1)^-
				\big]^2 \,dxds
				\leq 0.
			\end{align*}
			Testing the full equation with $(a+1)^-$ and using these computations one finds
			\begin{align*}
				\frac12\norm{(a+1)^-(t)}^2_{L^2(\Omega)}
				\leq
				-\int_0^t\int_\Omega f\cdot(a+1)^-\,dxds
				\leq 0.
			\end{align*}
		\end{proof}

\section{Ordinary Differential Equations}\label{sec:AppendixODE}
	The plan for this section is as follows. We recall Gr\"onwall's inequality in Theorem \ref{Gronwall} and will thereafter provide a Banach space valued ODE existence theorem in Theorem \ref{LocalExistenceTheorem}. The version of our ODE theorem guarantees existence and uniqueness of short-time solutions. We also show that the solutions depend continuously on the initial value with a common short-time existence interval, at least locally around a fixed initial value. This will come in handy for analyzing pointwise properties of certain ODEs in Lemma \ref{PointwiseProperties}. Finally we prove the existence and uniqueness of the cell ODE in Lemma \ref{SolveCellODE} and the bone ODE in \eqref{ExistenceBoneODE}
	\begin{lemma}[Gr\"onwall Variant]\label{Gronwall}
		Let $X$ be a Banach space and $I=[0,T]$ an interval. Let $x_0,y_0\in X$ and assume that $\gamma_1,\gamma_2$ are members of $L^1(I,X)$. Define $x$ and $y$ to be the integral curves
		\begin{align*}
		x(t)
		=
		x_0 + \int_0^t\gamma_1(s)\,ds
		\quad\text{and}\quad
		y(t) 
		=
		y_0 + \int_0^t\gamma_2(s)\,ds. 
		\end{align*}
		Now assume that we can estimate the integrants $\gamma_1,\gamma_2$ in the following form
		\begin{align*}
		\norm{\gamma_1(t)-\gamma_2(t)}
		\leq
		\alpha(t) + \beta(t)\norm{x(t)-y(t)}
		\quad\text{for all }t\in I,
		\end{align*}
		where $\alpha,\beta\in L^1(I)$ are non-negative functions. Then it holds that
		\begin{align*}
		\norm{x(t)-y(t)}\leq C\big(\norm{x_0-y_0}+\norm{\alpha}_{L^1(I)}\big)
		\quad\text{for all }t\in I
		\end{align*}
		and the constant $C$ can be chosen to be $C=1+\norm{\beta}_{L^1(I)}\exp(\norm{\beta}_{L^1(I)})$.
		\begin{proof}
			Just write out the estimate that the difference $\norm{x(t)-y(t)}$ satisfies due to the assumptions and then use the usual integral formulation of Gr\"onwall's inequality, see for example \cite[Theorem 1.2.8]{qin2017analytic}.
		\end{proof}
	\end{lemma}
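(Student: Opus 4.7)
The plan is to reduce the Banach-space version to a scalar integral inequality for $u(t) := \norm{x(t)-y(t)}$ and then apply the classical integral form of Gr\"onwall. Since $x$ and $y$ are defined as Bochner integrals of $L^1(I,X)$ maps, they are (absolutely) continuous in $t$, and $u$ is a continuous real-valued function on $I$. Because the norm pulls inside the integral, $u(t) \leq \norm{x_0-y_0} + \int_0^t \norm{\gamma_1(s)-\gamma_2(s)}\,ds$, so the whole problem is set up to become a scalar question.

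The first substantive step is to insert the hypothesis into this bound: using $\norm{\gamma_1(s)-\gamma_2(s)} \leq \alpha(s) + \beta(s) u(s)$, one obtains
\begin{align*}
    u(t) \leq \norm{x_0-y_0} + \norm{\alpha}_{L^1(I)} + \int_0^t \beta(s) u(s)\,ds.
\end{align*}
Setting $A := \norm{x_0-y_0} + \norm{\alpha}_{L^1(I)}$, this is a scalar inequality of the form $u(t) \leq A + \int_0^t \beta(s) u(s)\,ds$ with $u$ continuous, $A \geq 0$ constant, and $\beta \in L^1(I)$ non-negative, which is exactly the hypothesis of the standard integral Gr\"onwall lemma (e.g.\ \cite[Theorem 1.2.8]{qin2017analytic} as the author cites). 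Applying it yields $u(t) \leq A \exp\bigl(\int_0^t \beta(s)\,ds\bigr) \leq A \exp(\norm{\beta}_{L^1(I)})$.

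To finish, I would reconcile this with the constant form stated in the lemma. The elementary inequality $e^s \leq 1 + s e^s$ for $s \geq 0$ (immediate from $e^s = 1 + \int_0^s e^\tau\,d\tau \leq 1 + s e^s$) upgrades $\exp(\norm{\beta}_{L^1(I)})$ to $1 + \norm{\beta}_{L^1(I)}\exp(\norm{\beta}_{L^1(I)})$, giving exactly $C$ as in the statement.

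There is really no genuine obstacle here: the argument is routine once one notes that continuity of $u$ and $L^1$-integrability of $\beta$ suffice to invoke scalar Gr\"onwall. The only points requiring a moment of care are the measurability/continuity of $u$ (immediate from absolute continuity of the Bochner integrals) and the fact that the pointwise hypothesis on $\gamma_1-\gamma_2$ produces an $L^1$ right-hand side after substitution (true because $u$ is bounded on the compact interval $I$ and $\alpha,\beta \in L^1(I)$).
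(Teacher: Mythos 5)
Your proof is correct and follows exactly the route the paper sketches: reduce to the scalar function $u(t)=\norm{x(t)-y(t)}$, absorb the $\alpha$-term into the constant, and invoke the integral form of Gr\"onwall. The only cosmetic difference is the final bookkeeping for the constant; the paper's stated $C$ is the direct output of the common Gr\"onwall variant $u(t)\le A+A\int_0^t\beta(s)\exp\bigl(\int_s^t\beta\bigr)ds\le A\bigl(1+\norm{\beta}_{L^1}e^{\norm{\beta}_{L^1}}\bigr)$, while you obtain the cruder $Ae^{\norm{\beta}_{L^1}}$ first and then upgrade via $e^s\le 1+se^s$, which is equally valid.
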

	\begin{theorem}[Local Existence]\label{LocalExistenceTheorem}
	Let $X$ be a Banach space, $I=[a,b]$ a bounded interval and $F:I\times X\to X$ a Carath\'eodory function, i.e., $F(\cdot, x)$ is Bochner measurable for all $x\in X$ and $F(t,\cdot)$ is continuous almost everywhere in $I$. Assume that for every bounded set $B\subset X$ there are functions $m_B\in L^p(I)$, $p\in[1,\infty]$ and $L_B\in L^1(I)$, possibly depending on $B$, such that
	\begin{align}
	    \norm{F(t,x)}_X &\leq m_B(t) \quad \text{a.e.\ in }I,\ \forall x\in B \label{localExistenceCondition1},
	    \\
	    \norm{F(t,x) - F(t,y)}_X &\leq L_B(t)\norm{x-y} \quad \text{a.e.\ in }I,\ \forall x,y\in B\label{localExistenceCondition2}.
	\end{align}
	Let furthermore $t_0\in I$, $x_0\in X$ and $R > 0$ be arbitrary, then there exists a time interval $I_\delta\coloneqq[t_0-\delta,t_0+\delta]\cap I$ such that for any initial value $y_0\in \overline{B_R(x_0)}\subset X$ there is a unique short time solution $x\in W^{1,p}(I_\delta,X)$ of the ODE
	\begin{align*}
	    d_tx(t) = F(t,x(t)) \quad\text{and } x(t_0) = y_0.
	\end{align*}
	Moreover the map $y_0\mapsto x(y_0)$ taking the initial value to its solution seen as a map $\overline{B_R(x_0)}\subset X \to C^0(I_\delta,X)$ is continuous.
	\end{theorem}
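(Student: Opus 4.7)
The plan is to reduce the initial value problem to the fixed-point equation $x = \mathcal{T}x$ for the Volterra operator
$$(\mathcal{T}x)(t) := y_0 + \int_{t_0}^t F(s, x(s))\, ds$$
on a suitable closed subset of $C^0(I_\delta, X)$, and then apply the Banach contraction principle. Since elements of $W^{1,p}(I_\delta, X)$ admit absolutely continuous representatives, the IVP is equivalent to this fixed-point equation.

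First I would fix the enlarged bounded set $B := \overline{B_{R+1}(x_0)} \subset X$ and let $m_B \in L^p(I)$, $L_B \in L^1(I)$ be the associated functions from \eqref{localExistenceCondition1}--\eqref{localExistenceCondition2}. By absolute continuity of the Lebesgue integral (applied to $m_B \in L^p(I) \subset L^1(I)$ on the bounded interval $I$ and to $L_B$) I choose $\delta > 0$ small enough that
$$\int_{I_\delta} m_B(s)\, ds \leq 1 \quad\text{and}\quad \int_{I_\delta} L_B(s)\, ds \leq \tfrac{1}{2}.$$
Crucially, $\delta$ depends only on $B$, $x_0$, and $R$, so the \emph{same} $\delta$ is valid for every $y_0 \in \overline{B_R(x_0)}$, as the statement requires.

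Next, for each $y_0 \in \overline{B_R(x_0)}$ I introduce the closed subset $M_{y_0} := \{ x \in C^0(I_\delta, X) : x(t) \in B \text{ for all } t \in I_\delta \}$ of $C^0(I_\delta, X)$. The bound on $\int m_B$ yields $\mathcal{T}(M_{y_0}) \subset M_{y_0}$, since $\norm{(\mathcal{T}x)(t) - x_0}_X \leq \norm{y_0 - x_0}_X + \int_{I_\delta} m_B \leq R + 1$, and the Lipschitz bound together with the control on $\int L_B$ makes $\mathcal{T}$ a strict contraction on $M_{y_0}$ with constant $1/2$. Before invoking Banach's fixed-point theorem I would verify that the Bochner integral $\int_{t_0}^t F(s, x(s))\, ds$ is well defined for $x \in M_{y_0}$: strong measurability of $s \mapsto F(s, x(s))$ follows from the Carath\'eodory property by uniformly approximating the continuous curve $x$ with simple functions and passing to the pointwise limit using continuity of $F$ in its second argument, while integrability comes from $\norm{F(s, x(s))}_X \leq m_B(s) \in L^1(I_\delta)$. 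Banach's theorem then produces a unique fixed point $x \in M_{y_0}$, and differentiating the fixed-point identity gives $d_tx(t) = F(t, x(t))$ a.e.\ with $\norm{d_tx(\cdot)}_X \leq m_B \in L^p(I_\delta)$, so $x \in W^{1,p}(I_\delta, X)$.

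Uniqueness among \emph{all} $W^{1,p}(I_\delta, X)$-solutions (not only those in $M_{y_0}$) and continuity in the initial value both follow from Lemma \ref{Gronwall}. Any $W^{1,p}$-solution is continuous on the compact interval $I_\delta$, hence takes values in some bounded set $B'$; the Lipschitz bound $L_{B'}$ combined with Lemma \ref{Gronwall} applied with $\alpha \equiv 0$ forces two solutions sharing the same initial datum to coincide. For continuous dependence, two initial data $y_0, y_0' \in \overline{B_R(x_0)}$ with corresponding fixed points $x, x'$ both yield curves in $B$ by construction, so the integral equation gives
$$\norm{x(t) - x'(t)}_X \leq \norm{y_0 - y_0'}_X + \int_{t_0}^t L_B(s)\, \norm{x(s) - x'(s)}_X\, ds,$$
and Lemma \ref{Gronwall} (with $\alpha \equiv 0$, $\beta = L_B$) delivers $\norm{x - x'}_{C^0(I_\delta, X)} \leq C\, \norm{y_0 - y_0'}_X$ with $C$ independent of $y_0, y_0'$. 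The main obstacle is establishing strong measurability of $s \mapsto F(s, x(s))$ in the Bochner sense when $X$ is an infinite-dimensional Banach space, since the Carath\'eodory hypothesis only gives measurability in $t$ for fixed $x$; once that technical point is settled, the rest is a routine Banach-space adaptation of the classical Picard--Lindel\"of argument.
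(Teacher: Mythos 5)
Your proposal is correct and follows essentially the same Picard--Lindel\"of-type strategy as the paper: choose $\delta$ small via absolute continuity of the integral, set up the Volterra operator on a closed bounded subset of $C^0(I_\delta,X)$ (the paper uses the ball of radius $2R$ around the constant curve $x_0$ with $\int m_B \le R$ and $\int L_B < 1$; you use the $R+1$ ball with $\int m_B \le 1$ and $\int L_B \le \tfrac12$, a purely cosmetic difference), verify strong measurability of $s\mapsto F(s,x(s))$ by approximation with simple functions, apply Banach's fixed-point theorem, and obtain $W^{1,p}$ regularity and continuous dependence via Lemma \ref{Gronwall}. One small thing you do that the paper leaves implicit is to note explicitly that uniqueness holds among \emph{all} $W^{1,p}(I_\delta,X)$-solutions, not only those lying in the fixed-point set, by observing that any such solution has range in a bounded set so Gr\"onwall applies; this tidies up a point the paper's proof passes over silently.
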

\begin{remark}
    Note that the Lipschitz assumption \eqref{localExistenceCondition2} implies that $x\mapsto F(t,x)$ is continuous. Therefore, to establish the Carath\'eodory regularity of $F$ it is enough to provide the Bochner measurability of the maps $F(\cdot,x):I\to X$ for all $x\in X.$
\end{remark}
	\begin{proof}
	First we clarify the dependence of $\delta$; we choose it to satisfy
	\begin{align}\label{SizeOfDelta}
	    \int_{I_\delta} m_{B_{2R}(x_0)}(s)\,ds \leq R \quad\text{and}\quad  \int_{I_\delta}L_{B_{2R}(x_0)}(s)\,ds < 1.
	\end{align}
	Then we consider the complete metric space $E\subset C^0(I_\delta,X)$ given by
	$$
	E\coloneqq \{ x\in C^0(I_\delta,X) \mid \sup_{t\in I_\delta} \norm{x(t)-x_0} \leq 2R \}
	$$
	and define the map
	$$
	\Phi: E \to E \quad \text{with} \quad \Phi(x)(t) = y_0 + \int_{t_0}^t F(s,x(s))\,ds.
	$$
	We now proceed by showing the following facts
	\begin{enumerate}
	    \item [(i)] For all $x\in E$ the map $t\mapsto F(t,x(t))$ is Bochner integrable as a map $I_\delta \to X$. In fact it is a member of $L^p(I_\delta, X)$.
	    \item[(ii)] The function $\Phi$ is a self-mapping and a contraction.
	    \item[(iii)] The fix-point of $\Phi$ is a member of $W^{1,p}(I_\delta, X)$ and corresponds to the solution of the ODE.
	    \item[(iv)] The solution depends continuously on the initial data.
	\end{enumerate}
	To establish (i) note that the assumption of Carath\'eodory regularity of $F$ implies that $t\mapsto F(t,x(t))$ is Bochner measurable as a map $I_\delta \to X$ for all maps $x:I_\delta \to X$ that are itself Bochner measurable, which clearly holds for members of $E$. We are left to show the integrability, so we estimate for $x\in E$
	$$
	\int_{I_\delta} \norm{F(t,x(t))}^p_X\,dt \leq \int_{I_\delta}|m_{B_{2R}(x_0)}(t)|^p\,dt < \infty,
	$$
	which shows the assertion. Now to (ii). Let again $x\in E$ and estimate
	\begin{align*}
	\sup_{t\in I_\delta} \norm{\Phi(x)(t) - x_0}_X &\leq \norm{x_0-y_0} + \int_{I_\delta} \norm{ F(t,x(t)) }_X\,dt 
	\\ 
	&\leq R + \int_{I_\delta}m_{B_{2R}(x_0)}(t)\,dt 
	\\
	&\leq 2R.
	\end{align*}
	To see that $\Phi$ is a contraction compute for $x,y\in E$
	\begin{align*}
	\sup_{t\in I_\delta} \norm{\Phi(x)(t) - \Phi(y)(t)}_X &\leq \sup_{t\in I_\delta} \int_{I_\delta}\norm{F(t,x(t)) - F(t,y(t))}_X\,dt
	\\&\leq 
	\underbrace{\norm{ L_{ B_{2R}(x_0) } }_{L^1(I_\delta)}}_{<1}\norm{x - y}_E
	\end{align*}
	The claim (iii) follows as the unique fix-point $x$ of $\Phi$ is, by the fundamental theorem, a solution to the ODE. As $d_tx(t) = F(t,x(t))$ the $L^p$ integrability of the derivative of this fix-point follows from the one of $F(\cdot,x(\cdot))$ which was established in (i).
	\\
	Finally to (iv), where we will employ Gr\"onwall's lemma. Let $y_0$ and $\overline{y}_0$ be in $\overline{B_R(x_0)}$, then the according solutions are given by
	$$
	y(t) = y_0 + \int_{t_0}^t F(s,y(s))\,ds \quad \text{and} \quad \overline{y}(t) = \overline{y}_0 + \int_{t_0}^t F(s,\overline{y}(s))\,ds.
	$$
	The difference in the integrands can be estimated by 
	\begin{align*}
	    \norm{F(t,y(t)) - F(t,\overline{y}(t))}_X \leq C L_{ B_{2R}(x_0) }(t) \norm{y(t) - \overline{y}(t)}_X.
	\end{align*}
	So applying Lemma \ref{Gronwall} with $\alpha = 0$ and $\beta = L_{ B_{2R}(x_0) }$ yields
	$$
	\norm{y(t) - \overline{y}(t)}_X \leq C \norm{y_0 - \overline{y}_0}_X.
	$$
	\end{proof}
\begin{remark}It is often of interest to show the existence of long-time solutions. A particularly simple case in the setting of the above theorem is encountered if it holds that for any initial value $y_0\in \overline{B_R(x_0)}\subset X$ the solution takes values only in $\overline{B_R(x_0)}$. Then one glues together multiple short-time solutions with the guarantee of $\delta$ not deteriorating.  
\end{remark}
	\begin{lemma}[Pointwise Properties of Solutions]\label{PointwiseProperties}
	    Let $I=[a,b]$ be an interval and $K:I\times\mathbb{R}\to\mathbb{R}$ a Carath\'eodory function such that $x\geq 0$ implies $K(t,x) \geq 0$ for all $t\in I$. For fixed $t_0\in I$ consider the ODE
	    $$
	    x'(t) = K(t,x(t))\bigg( 1- \frac{x(t)}{\theta} \bigg) \quad\text{and}\quad x(t_0) = x_0,
	    $$
	    where $\theta >0$ and $\lambda\geq0$ are fixed numbers and $x_0\in[0,\theta]$. Assume that there is an interval $I_\delta = [t_0-\delta,t_0+\delta]\cap I$ such that for any choice of $x_0\in[0,\theta]$ we have a solution $x\in W^{1,1}(I_\delta)$ of the ODE which we assume to continuously depend on the initial data $x_0$, i.e., we assume that for every $x_0\in [0,\theta]$ there is a neighborhood $N_{x_0}$ around $x_0$ such that $x_0\mapsto x$ is continuous as a map $N_{x_0} \to C^0(I_\delta)$, where $x$ is the solution to the ODE with initial value $x_0$. Then it holds
	    $$
	    0\leq x(t) \leq \theta \quad \text{for all }t\in I_\delta.
	    $$
	    \begin{proof}
	    We know that $x$ satisfies the identity
	    $$
	    x(t) = x_0 + \int_{t_0}^t K(s,x(s))\bigg(1-\frac{x(s)}{\theta}\bigg)\,ds \quad \text{for all }t\in I_\delta
	    $$
	    \emph{Upper Barrier.} We prove this by contradiction. Suppose there was $s_0\in I_\delta$ with $x(s_0) > \theta$, then on a neighborhood of $s_0$ solution must be non-increasing which can be seen as follows: Due to the continuity of $x$ there is $\varepsilon>0$ such that
	    $$
	    x(t) \geq \theta \quad\text{for all }t\in (s_0-\varepsilon,s_0 + \varepsilon).
	    $$
	    If $x$ was not non-increasing on $(s_0-\varepsilon,s_0+\varepsilon)$ then there exist $t_1 < t_2$ in that interval such that $x(t_2) > x(t_1)$ and therefore 
	    $$
	    0 < x(t_2) -x(t_1) = \int_{t_1}^{t_2} \underbrace{K(s,x(s)) }_{\geq 0} \underbrace{ \bigg(1 - \frac{x(s)}{\theta} \bigg) }_{\leq 0}\,ds \leq 0,
	    $$
	    which settles the claim. Now, by judicious Zornification we produce a maximal interval $Z$ around $s_0$ on which $x$ is non-increasing. Then $t^*\coloneqq \inf Z = t_0$ (if it was not $t_0$, repeat the above reasoning and find that $Z$ was not maximal) and hence $\theta < x(t^*) = x(t_0) \leq \theta$ clearly is a contradiction. 
	    \\
	    \\
	    \emph{Lower Barrier.} With an analogue reasoning as in the proof for the upper barrier we can establish the following: If $x(s_0)\in (0,\theta]$ for some $s_0\in I_\delta$ then $x(t)\in[x(s_0),\theta]$ for all $t\geq s_0$. This yields the claim for all initial values strictly larger than zero. We need $x(s_0)$ to exceed zero to guarantee the existence of a small interval $(s_0-\delta, s_0+\delta)$ where $x\geq 0$ still holds, to be able to use $K(s,x(s))\geq 0$ on this interval. For $x(t_0) = 0$ we approximate the solution by considering initial values $x_n(t_0) = 1/n$, i.e., we find solutions $x_n$ to 
	    $$
	    x'_n(t) = K(s,x_n(s))\bigg( 1 - \frac{x_n(s)}{\theta}\bigg) \quad \text{with} \quad x_n(t_0) = n^{-1}.
	    $$
	    As shown before we then know that $x_n(t)\in[1/n,\theta]$ for all $t\in I_\delta$. By the continuity we assumed we can pass to the limit in $n$ and obtain $0\leq x(t) \leq \theta$ for all $t\in I_\delta$.
	    \end{proof}
	\end{lemma}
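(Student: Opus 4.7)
The strategy is a first-exit-time argument exploiting the sign structure of the right-hand side, combined with an approximation argument for the degenerate initial value $x_0 = 0$. Since $x \in W^{1,1}(I_\delta)$ embeds continuously into $C^0(I_\delta)$ in one dimension, crossings of the levels $0$ and $\theta$ are well-defined. For concreteness I describe the argument on $[t_0, t_0+\delta] \cap I$; the backward half is either not needed (in the intended application $t_0$ is the initial time) or follows from an analogous sign analysis.

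For the upper barrier, suppose towards contradiction that $x(s_0) > \theta$ for some $s_0 > t_0$. Because $x$ is continuous and $x(t_0) = x_0 \leq \theta$, the value $t^* \coloneqq \sup\{t \in [t_0, s_0] : x(t) \leq \theta\}$ lies in $[t_0, s_0)$, satisfies $x(t^*) = \theta$, and one has $x > \theta$ pointwise on $(t^*, s_0]$. On this last interval $K(t,x(t)) \geq 0$ (since $x(t) > 0$) while $1 - x(t)/\theta < 0$, hence $x' \leq 0$ almost everywhere. Integration yields $x(s_0) \leq x(t^*) = \theta$, contradicting the choice of $s_0$.

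For the lower barrier I first handle $x_0 > 0$ by the same template. If $x(s_0) < 0$ for some $s_0 > t_0$, then $t^{**} \coloneqq \inf\{t \in [t_0, s_0] : x(t) < 0\}$ lies in $(t_0, s_0]$ with $x(t^{**}) = 0$ and $x \geq 0$ on $[t_0, t^{**}]$. Combining this with the upper bound already proved, $0 \leq x \leq \theta$ on $[t_0, t^{**}]$, so both $K$ and $1 - x/\theta$ are non-negative there; hence $x' \geq 0$ a.e.\ on $[t_0, t^{**}]$ and $0 = x(t^{**}) \geq x_0 > 0$, a contradiction. For the degenerate initial datum $x_0 = 0$, let $x_n$ denote the solution with initial value $1/n$; the preceding step gives $x_n(t) \in [1/n, \theta]$ on $I_\delta$, and the assumed continuous dependence of solutions on the initial value supplies $x_n \to x$ uniformly on $I_\delta$, so the inequality $0 \leq x \leq \theta$ passes to the limit.

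The main subtlety is to respect the conditional nature of the hypothesis on $K$: the sign $K \geq 0$ is guaranteed only where $x \geq 0$, and the first-exit constructions are precisely what ensures that the ODE is invoked only on sub-intervals where this sign is already known. The approximation step for $x_0 = 0$ is what makes the degenerate initial condition tractable, since a direct sign argument fails there: the strictness $x_0 > 0$ is exactly what turns $x' \geq 0$ on $[t_0, t^{**}]$ into an inequality that contradicts $x(t^{**}) = 0$, and this strictness is lost without the approximation.
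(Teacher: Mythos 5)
Your proof is correct and follows essentially the same route as the paper's: a barrier/contradiction argument driven by the sign of the right-hand side, followed by the $x_n(t_0)=1/n$ approximation and passage to the limit via continuous dependence to handle the degenerate initial value $x_0=0$. The only real difference is cosmetic: you package the contradiction step as a clean first-exit-time argument ($t^*=\sup\{t: x(t)\le\theta\}$, respectively $t^{**}=\inf\{t: x(t)<0\}$) and then integrate $x'$ over $(t^*,s_0]$, whereas the paper establishes local monotonicity near $s_0$ and then invokes a Zorn-type maximal-interval construction; your version is tidier and avoids the maximality bookkeeping.

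One small caveat on the parenthetical remark that the backward half ``follows from an analogous sign analysis'': it does not. For $t<t_0$ the sign structure works against you --- e.g.\ $K\equiv 1$, $\theta=1$, $x_0=1/2$ gives $x(t)=1-\tfrac12 e^{-(t-t_0)}$, which becomes negative for $t-t_0<-\ln 2$ --- so the lower barrier genuinely fails in backward time. Your first alternative (that backward time is not needed because $t_0$ is the initial time in the application) is the one that actually holds, and in fact the paper's own proof implicitly restricts to $t\ge t_0$ as well (its claim $\inf Z=t_0$ presumes $s_0>t_0$). Since you flag this and the forward argument is complete and correct, this does not affect the validity of your proof, but the ``or'' clause should be dropped.
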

		We continue by explaining the connection between Banach space valued ODEs and the formulation as a family of real valued ODEs in our examples \eqref{CellODE}, \eqref{ODE}. A moments reflection reveals that we only need to guarantee that for every fixed $x\in\overline{\Omega}$ it holds
	\[
	    t\mapsto d_tb(t,x) = \frac{d}{dt}(t\mapsto b(t,x)),
	\]
	where on the right hand side we talk about the usual, real valued, weak derivative. Therefore let $b\in W^{1,2}(I,C^0(\overline{\Omega}))$, then for $d_tb$ it holds by definition 
	\[
		\int_I d_tb(t)\varphi(t)\,dt = -\int_I b(t)\partial_t\varphi(t)\,dt \quad \forall \varphi \in\mathcal{D}(I).
	\]
	The integral used above is the $C^0(\overline{\Omega})$ valued Bochner integral, thus using that point evaluation is a linear and continuous map on the space of continuous functions we find that for every $x\in \overline{\Omega}$ it holds
	\begin{align*}
		\int_I d_tb(t)(x)\varphi(t)\,dt
		=
		-\int_I b(t)(x)\partial_t\varphi(t)\,dt
		\quad\forall \varphi\in\mathcal{D}(I),
	\end{align*}
	meaning that for every fixed $x\in\overline{\Omega}$ the function $t\mapsto b(t)(x)$ satisfies the real valued ODE as desired. If we additionally assume that both, the Banach space valued ODE and the scalar one have unique solutions, we obtain that both settings are equivalent. This means that the above Lemma is applicable to deduce pointwise properties of the solutions to the Banach space valued ODEs.
	\begin{lemma}[Solveability of the Cell ODE]\label{SolveCellODE}
	Assume $H: \mathbb{R}^{N + 2} \to \mathbb{R}$ is locally Lipschitz continuous and that $H$ is non-negative if all its arguments are non-negative. Further, let the assumptions \ref{AssumptionOnKAndH} be satisfied. Then there is a unique solution $c \in W^{1,p}(I, C^0(\overline{\Omega}))$ of the equation
	\begin{equation*}
	    d_tc = H(a_1,\dots,a_N,b,c)\bigg(1 - \frac{c}{1-\rho}\bigg) \quad \text{with} \quad c(0) = 0.
	\end{equation*}
	Furthermore the solution satisfies $0\leq c(t,x) \leq 1 - \rho(x)$ for all $t\in I$ and $x\in\overline{\Omega}$.
	\begin{proof}
	 To begin with, define the auxiliary function
	 \begin{equation*}
	     \tilde{H}:\mathbb{R}^{N+2}\times[c_P,C_P]\to\mathbb{R}\quad\text{with}\quad \tilde{H}(a,b,c,\rho) = H(a,b,c)\bigg( 1 - \frac{c}{1-\rho} \bigg).
	 \end{equation*}
	 Then $\tilde{H}$ is locally Lipschitz continuous. Now note that the ODE is induced by
	 \begin{equation*}
	     F:I\times C^0(\overline{\Omega})\to C^0(\overline{\Omega}) \quad\text{with}\quad F(t,c) = x \mapsto \tilde{H}(a(t,x),b(x),c(x),\rho(x)).
	 \end{equation*}
	 We aim to apply Theorem \ref{LocalExistenceTheorem} to produce a short-time solution, hence we need to guarantee
	 \begin{enumerate}
	     \item [(i)] $F(t,c)\in C^0(\overline{\Omega})$ for all $t\in I$ and $c\in C^0(\overline{\Omega})$,
	     \item [(ii)] $F(\,\cdot\,,c):I\to C^0(\overline{\Omega})$ is Bochner measurable for all $c\in C^0(\overline{\Omega})$,
	     \item[(iii)] $F$ satisfies \eqref{localExistenceCondition1} and \eqref{localExistenceCondition2}.
	 \end{enumerate}
	 The statement (i) is clear as $F(t,c)$ is a composition of continuous functions. To prove (ii) we write $a$ as a pointwise almost everywhere limit of finitely valued, measurable functions $(s_k)\subset \mathcal{S}(I, C^0(\overline{\Omega})^N)$. Then $t\mapsto \tilde{H}(s_k(t),b,c,\rho)$ is still a member of $\mathcal{S}(I, C^0(\overline{\Omega})^N)$. As $s_k(t)\to a(t)$ in $C^0(\overline{\Omega})$ almost everywhere in $I$, for fixed $t\in I$ the set
	   $$
	   \bigcup_{k\in\mathbb{N}}\{ (s_k(t,x),b(x),c(x),\rho(x)),(a(t,x),b(x),c(x),\rho(x)) \mid x\in\overline{\Omega} \} \subset \mathbb{R}^{N+3}
	   $$
	   is relatively compact in $ \mathbb{R}^{N+3} $. Hence it holds
	   \begin{align*}
	       \norm{\tilde{H}(s_k(t),b,c,\rho) - \tilde{H}(a(t),b,c,\rho)}_{C^0(\overline{\Omega})}
	       \leq
	            C\norm{s_k(t)-a(t)}_{C^0(\overline{\Omega})}.
	   \end{align*}
	   This establishes the Bochner measurability in $(ii)$. To show (iii) let $B\subset C^0(\overline{\Omega})$ be bounded and compute 
	   \begin{align*}
	       \norm{F(t,\tilde{c})}_{C^0(\overline{\Omega})} &\leq \norm{ H( a(t), b(t), \tilde{c} ) }_{C^0(\overline{\Omega})} \norm{1 - \frac{ \tilde{c} }{ 1 - \rho }}_{C^0(\overline{\Omega})}
	       \\ &\leq m_B^H(t)\big( 1 + \max_{x \in \overline{\Omega}}[1/(1-\rho(x))]\norm{\tilde{c}}_{C^0(\overline{\Omega})} \big)
	       \\ &\leq C m_B^H(t).
	   \end{align*}
    In a similar way we estimate
    \begin{align*}
        \norm{ F(t,\tilde{c}) - F(t, \tilde{\tilde{c}}) }_{C^0(\overline{\Omega})} &\leq \norm{ H( a(t),b(t),\tilde{c} ) - H( a(t),b(t),\tilde{\tilde{c}})}_{C^0(\overline{\Omega})} \norm{ 1 - \frac{\tilde{c}}{1 - \rho} }
        \\&+
        \norm{H( a(t), b(t), \tilde{c} )}_{C^0(\overline{\Omega})}\norm{\frac{\tilde{c}}{1 - \rho} - \frac{\tilde{\tilde{c}}}{1 - \rho} }_{C^0(\overline{\Omega})}
        \\&\leq
        CL_B^H(t)\norm{\tilde{c} - \tilde{\tilde{c}}}_{{C^0(\overline{\Omega})}} + Cm_B^H(t)\norm{\tilde{c} - \tilde{\tilde{c}}}_{{C^0(\overline{\Omega})}}
        \\&\leq
        \underbrace{ C\max(L_B^H(t), m_B^H(t))}_{= \zeta}\norm{\tilde{c} - \tilde{\tilde{c}}}_{{C^0(\overline{\Omega})}}
    \end{align*}
    and $\zeta$ is a member of  $L^1(I)$. To establish a long-time solution note that by our pointwise lemma \ref{PointwiseProperties} we have
	 \begin{equation*}
	 0 \leq c(t,x) \leq 1 - \rho(x) \leq 1.
	 \end{equation*}
	 Remember that we discussed the connection between Banach space valued ODEs and $\mathbb{R}$ valued ODEs in the section \ref{sectionMathematicalFormulation}. Finally using the remark following Theorem \ref{LocalExistenceTheorem} we conclude.
	\end{proof}
	\end{lemma}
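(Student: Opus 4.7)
The plan is to cast the equation as a Banach space valued ODE in the space $X = C^0(\overline{\Omega})$ and apply the local existence theorem \ref{LocalExistenceTheorem}, then upgrade the short-time solution to a global one using the pointwise barrier provided by Lemma \ref{PointwiseProperties}. To this end I would introduce the auxiliary function
\begin{equation*}
    \tilde H:\mathbb{R}^{N+2}\times[c_P,C_P]\to\mathbb{R},\qquad \tilde H(a,b,c,\rho) = H(a,b,c)\Big(1-\tfrac{c}{1-\rho}\Big),
\end{equation*}
which inherits local Lipschitz continuity from $H$ since $\rho$ is bounded away from $1$. The ODE is then induced by the Nemytskii-type map $F:I\times C^0(\overline{\Omega})\to C^0(\overline{\Omega})$ defined by $F(t,c)(x) = \tilde H(a(t,x),b(t,x),c(x),\rho(x))$, and I am looking for $c\in W^{1,p}(I, C^0(\overline{\Omega}))$ with $d_tc = F(t,c)$ and $c(0) = 0$.

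Next I would verify the hypotheses of Theorem \ref{LocalExistenceTheorem}. That $F(t,c)\in C^0(\overline{\Omega})$ is immediate from the continuity of $\tilde H$ and of the arguments. Bochner measurability of $t\mapsto F(t,c)$ for fixed $c$ follows by approximating $a\in L^2(I,C^0(\overline{\Omega}))^N$ by simple functions $s_k$ and exploiting the (local) Lipschitz bound on $\tilde H$ on the relatively compact set swept out by $(s_k, a, b, c, \rho)$; this turns pointwise convergence $s_k(t)\to a(t)$ in $C^0(\overline{\Omega})$ into convergence of the composition. For the Carathéodory bounds \eqref{localExistenceCondition1} and \eqref{localExistenceCondition2}, I would exploit Assumption \ref{AssumptionOnKAndH}: for any bounded $B\subset C^0(\overline{\Omega})$ and $\tilde c\in B$,
\begin{equation*}
    \|F(t,\tilde c)\|_{C^0(\overline{\Omega})}\le m_B^H(t)\bigl(1+\tfrac{1}{1-C_P}\|\tilde c\|_{C^0(\overline{\Omega})}\bigr)\le C\, m_B^H(t),
\end{equation*}
with $m_B^H\in L^p(I)$, and a similar splitting of $F(t,\tilde c)-F(t,\tilde{\tilde c})$ into a term controlled by $L_B^H(t)$ and one controlled by $m_B^H(t)$ yields the required Lipschitz estimate with weight $\zeta(t) = C\max(L_B^H(t), m_B^H(t))\in L^1(I)$. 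Theorem \ref{LocalExistenceTheorem} then produces a unique short-time solution $c\in W^{1,p}(I_\delta, C^0(\overline{\Omega}))$, together with continuous dependence on the initial value.

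To globalize, I would pass to the pointwise formulation: by the discussion preceding the lemma, for each fixed $x\in\overline{\Omega}$ the map $t\mapsto c(t,x)$ solves the scalar logistic-type ODE with the same structure, so Lemma \ref{PointwiseProperties} with $\theta = 1-\rho(x)$ yields $0\le c(t,x)\le 1-\rho(x)$ wherever the solution is defined. This a priori barrier means that the solution remains in a fixed bounded subset of $C^0(\overline{\Omega})$, so the short-time interval $\delta$ produced by Theorem \ref{LocalExistenceTheorem} does not shrink as we iterate; concatenating the short-time solutions extends $c$ uniquely to all of $I$, and the bounds persist.

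The step I expect to be the main obstacle is the Bochner measurability of $t\mapsto F(t,c)$, since $a$ is only an $L^2(I,C^0(\overline{\Omega}))^N$ function and not continuous in time; the approximation-by-simple-functions argument together with local Lipschitz continuity of $\tilde H$ on relatively compact subsets of $\mathbb{R}^{N+3}$ is what makes this work. The remaining bookkeeping (uniform bounds, gluing of short-time solutions, transfer between the Banach-valued and pointwise ODE formulations) is then routine given the tools already assembled in the appendix.
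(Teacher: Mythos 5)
Your proposal follows the paper's own proof essentially step for step: introducing the auxiliary function $\tilde H$ with $\rho$ as an extra argument on the compact parameter set $[c_P,C_P]$, casting the equation as a Banach-space-valued ODE in $C^0(\overline{\Omega})$ via the Nemytskii-type map $F$, verifying the Carathéodory hypotheses of Theorem \ref{LocalExistenceTheorem} through simple-function approximation for measurability and Assumption \ref{AssumptionOnKAndH} for the $L^p$/$L^1$ bounds, and then globalizing with the pointwise barrier from Lemma \ref{PointwiseProperties}. The argument is correct and matches the paper's in structure and in all its key steps.
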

	Clearly the bone ODE can now be treated identically, provided one assumes the same for $K$ as one did for $H$. 
	\begin{lemma}[Solveability of the Bone ODE]\label{ExistenceBoneODE}
	Assume $K: \mathbb{R}^{N + 2} \to \mathbb{R}$ is locally Lipschitz continuous and that $K$ is non-negative if all its arguments are non-negative. Further, let the assumptions \ref{AssumptionOnKAndH} be satisfied. Then there is a unique solution $b \in W^{1,q}(I, C^0(\overline{\Omega}))$ of the equation
	\begin{equation*}
	    d_tb = H(a_1,\dots,a_N,b,c)\bigg(1 - \frac{b}{1-\rho}\bigg) \quad \text{with} \quad b(0) = 0.
	\end{equation*}
	Furthermore the solution satisfies $0\leq b(t,x) \leq 1 - \rho(x)$ for all $t\in I$ and $x\in\overline{\Omega}$.
	\end{lemma}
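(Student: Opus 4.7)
The plan is to mirror the argument used for the cell ODE in Lemma \ref{SolveCellODE}, since the structural form of the two equations is identical and Assumption \ref{AssumptionOnKAndH} imposes the same functional analytic requirements on $K$ as on $H$. Concretely, I would introduce the auxiliary map
\[
    \tilde{K}: \mathbb{R}^{N+2} \times [c_P, C_P] \to \mathbb{R}, \qquad \tilde{K}(a,b,c,\rho) = K(a,b,c)\Big( 1 - \frac{b}{1-\rho} \Big),
\]
which inherits local Lipschitz continuity from $K$ since $\rho$ is bounded away from $1$. Then define
\[
    F: I \times C^0(\overline{\Omega}) \to C^0(\overline{\Omega}), \qquad F(t,b)(x) = \tilde{K}\big(a(t,x), b(x), c(t,x), \rho(x)\big),
\]
so that the ODE $d_tb = K(a,b,c)(1 - b/(1-\rho))$ with $b(0)=0$ becomes an abstract Banach space valued ODE to which Theorem \ref{LocalExistenceTheorem} applies.

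Next I would verify the three hypotheses needed to invoke Theorem \ref{LocalExistenceTheorem}. That $F(t,b) \in C^0(\overline{\Omega})$ follows from the continuity of $\tilde{K}$ and of its arguments. For Bochner measurability of $F(\,\cdot\,,b): I \to C^0(\overline{\Omega})$ for fixed $b \in C^0(\overline{\Omega})$, I would approximate $a \in L^2(I, C^0(\overline{\Omega})^N)$ by finitely valued simple functions $(s_k)$ pointwise almost everywhere in $I$, note that the relevant joint ranges are relatively compact in $\mathbb{R}^{N+3}$, and use local Lipschitz continuity of $\tilde{K}$ to transfer convergence, exactly as in the proof of Lemma \ref{SolveCellODE}. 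The boundedness and Lipschitz conditions \eqref{localExistenceCondition1} and \eqref{localExistenceCondition2} of Theorem \ref{LocalExistenceTheorem} are then obtained from the corresponding $m_B^K$ and $L_B^K$ in Assumption \ref{AssumptionOnKAndH}, using that $1/(1-\rho)$ is bounded on $\overline{\Omega}$.

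This produces a unique short-time solution $b \in W^{1,q}(I_\delta, C^0(\overline{\Omega}))$ depending continuously on the initial datum. To upgrade to a solution on all of $I$ and to establish the pointwise bounds $0 \leq b(t,x) \leq 1 - \rho(x)$, I would fix $x \in \overline{\Omega}$, observe as at the end of Section \ref{sectionMathematicalFormulation} that point evaluation commutes with the time derivative so that $t \mapsto b(t,x)$ solves the scalar ODE with the same logistic structure, and apply Lemma \ref{PointwiseProperties} with $\theta = 1 - \rho(x)$. Since $K$ is non-negative on the non-negative orthant, the hypotheses of Lemma \ref{PointwiseProperties} are satisfied, and the resulting barriers show $b$ stays inside $W_\rho$ uniformly in time. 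These a priori bounds let me glue short-time solutions together without $\delta$ deteriorating, via the remark following Theorem \ref{LocalExistenceTheorem}, yielding the unique global solution on $I$.

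The only genuinely technical step is the Bochner measurability of $F(\,\cdot\,,b)$, because $a$ is only an $L^2$ function of time with values in the infinite-dimensional space $C^0(\overline{\Omega})$; the simple-function approximation combined with local Lipschitz control on a compact set in $\mathbb{R}^{N+3}$ is the standard way around this, and it is precisely the argument already carried out in Lemma \ref{SolveCellODE}. All remaining verifications are essentially a relabeling $H \mapsto K$, $c \mapsto b$, $p \mapsto q$.
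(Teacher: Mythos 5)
Your proposal is correct and follows exactly the approach the paper intends: the paper explicitly states that the bone ODE ``can now be treated identically'' to the cell ODE in Lemma \ref{SolveCellODE}, and you carry out precisely that relabeling $H \mapsto K$, $c \mapsto b$, $p \mapsto q$, invoking Theorem \ref{LocalExistenceTheorem} for short-time existence, Lemma \ref{PointwiseProperties} for the barrier bounds, and the remark after Theorem \ref{LocalExistenceTheorem} for gluing.
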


\bibliographystyle{abbrv}
\bibliography{bib}

\end{document}